\title{Quasi-likelihood analysis of an ergodic diffusion plus noise}
\author[S. H. Nakakita]{Shogo H. Nakakita$^{1}$}
\author[M. Uchida]{Masayuki Uchida$^{1,2}$}
\address{$^{1}$Graduate School of Engineering Science, Osaka University}
\address{$^{2}$Center for Mathematical Modeling and Data Science, Osaka University}
\begin{document}
\onehalfspacing
\maketitle

\begin{abstract}
	We consider adaptive maximum-likelihood-type estimators and adaptive Bayes-type ones for discretely observed ergodic diffusion processes with observation noise whose variance is constant.
	The quasi-likelihood functions for the diffusion and drift parameters are introduced and the polynomial-type large deviation inequalities for those quasi-likelihoods are shown to see the convergence of moments for those estimators.
\end{abstract}

\section{Introduction}

We consider a $d$-dimensional ergodic diffusion process defined by the following stochastic differential equation such that
\begin{align*}
\mathrm{d}X_{t}=b\left(X_{t},\beta\right)\mathrm{d}t+a\left(X_{t},\alpha\right)\mathrm{d}w_{t},\ X_{0}=x_{0},
\end{align*}
where $\left\{w_{t}\right\}_{t\ge 0}$ is an $r$-dimensional Wiener process, $x_{0}$ is a random variable independent of $\left\{w_{t}\right\}_{t\ge 0}$, $\alpha\in\Theta_{1}$ and $\beta\in\Theta_{2}$ are unknown parameters, $\Theta_{1}\subset\R^{m_{1}}$ and $\Theta_{2}\subset\R^{m_{2}}$ are bounded, open and convex sets in $\R^{m_{i}}$ admitting Sobolev's inequalities for embedding $W^{1,p}\left(\Theta_{i}\right)\hookrightarrow C\left(\overline{\Theta}_{i}\right)$ for $i=1,2$, $\theta^{\star}=\left(\alpha^{\star},\beta^{\star}\right)$ is the true value of the parameter, and $a:\R^{d}\times \Theta_{1}\to \R^{d}\otimes \R^{r}$ and $b:\R^{d}\times \Theta_{2}\to \R^{d}$ are known functions.

A matter of interest is to estimate the parameter $\theta=\left(\alpha,\beta\right)$ with partial and indirect observation of $\left\{X_{t}\right\}_{t\ge0}$: the observation is discretised and contaminated by exogenous noise. The sequence of observation $\left\{Y_{ih_{n}}\right\}_{i=0,\ldots,n}$, which our parametric estimation is based on, is defined as
\begin{align*}
	Y_{ih_{n}}=X_{ih_{n}}+\Lambda^{1/2}\varepsilon_{ih_{n}},\ i=0,\ldots,n,
\end{align*}
where $h_{n}>0$ is the discretisation step such that $h_{n}\to0$ and $T_{n}=nh_{n}\to\infty$, $\left\{\varepsilon_{ih_{n}}\right\}_{i=0,\ldots,n}$ is an i.i.d.\ sequence of random variables independent of $\left\{w_{t}\right\}_{t\ge 0}$ and $x_0$ such that $\mathbf{E}_{\theta^{\star}}\left[\varepsilon_{ih_{n}}\right]=0$ and $\mathrm{Var}_{\theta^{\star}}\left(\varepsilon_{ih_{n}}\right)=I_{d}$ where $I_{m}$ is the identity matrix in $\R^{m}\otimes \R^{m}$ for every $m\in\mathbf{N}$, and $\Lambda\in\R^{d}\otimes\R^{d}$ is a positive semi-definite matrix which is the variance of noise term. We also assume that the half vectorisation of $\Lambda$ has bounded, open and convex parameter space $\Theta_{\varepsilon}$, and let us denote $\Xi:=\Theta_{\varepsilon}\times \Theta_{1}\times \Theta_{2}$. We also notate the true parameter of $\Lambda$ as $\Lambda_{\star}$, its half vectorisation as $\theta_{\varepsilon}^{\star}=\mathrm{vech}\Lambda_{\star}$, and $\vartheta^{\star}=\left(\theta_{\varepsilon}^{\star},\alpha^{\star},\beta^{\star}\right)$. That is to say, our interest is on parametric inference for an ergodic diffusion with long-term and high-frequency noised observation. One concrete example is the wind velocity data provided by \citet{NWTC} whose observation is contaminated by exogenous noise with statistical significance according to the test for noise detection \citep{Nakakita-Uchida-2018a}.

\begin{figure}[h]
	\begin{subfigure}{.5\textwidth}
		\centering
		\includegraphics[bb=0 0 720 480,width=7cm]{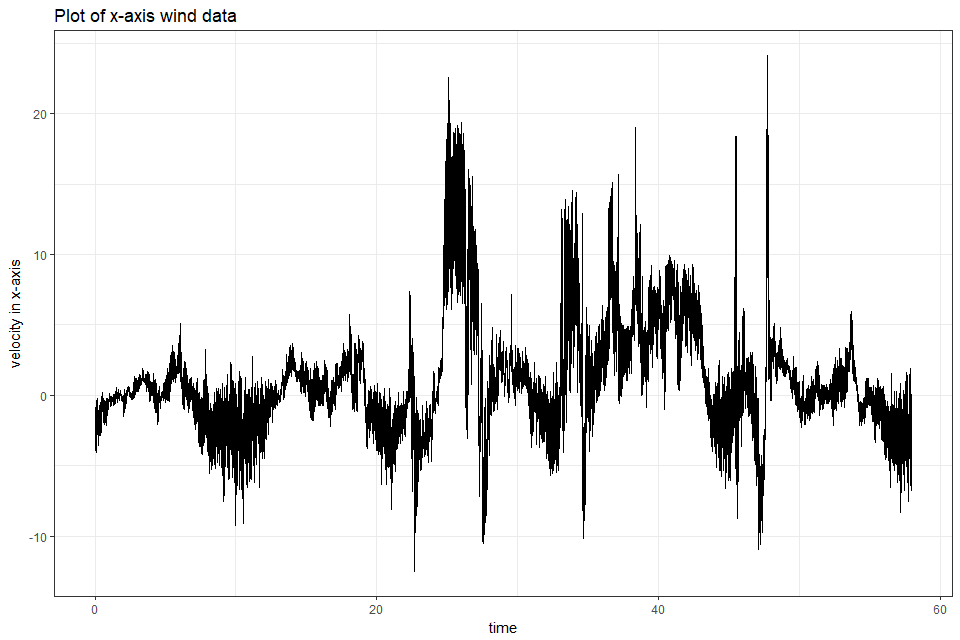}
	\end{subfigure}%
	\begin{subfigure}{.5\textwidth}
		\centering
		\includegraphics[bb=0 0 720 480,width=7cm]{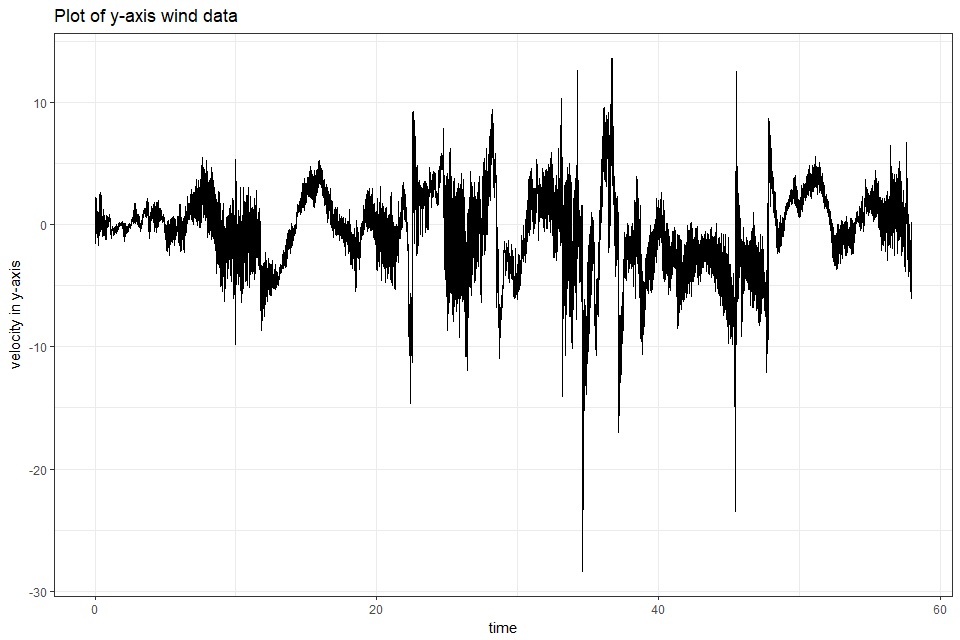}
	\end{subfigure}
	\caption{plot of wind velocity labelled Sonic x (left) and y (right) (119M) at the M5 tower from 00:00:00 on 1st July, 2017 to 20:00:00 on 5th July, 2017 with 0.05-second resolution \citep{NWTC}}\label{MetData}
\end{figure}

As the existent discussion, \cite{Nakakita-Uchida-2018a} propose the following estimator $\hat{\Lambda}_{n}$, $\hat{\alpha}_{n}$ and $\hat{\beta}_{n}$ such that
\begin{align*}
	&\hat{\Lambda}_{n}=\frac{1}{2n}\sum_{i=0}^{n-1}\left(Y_{\left(i+1\right)h_{n}}-Y_{ih_{n}}\right)^{\otimes 2},\\
	&\mathbb{H}_{1,n}^{\tau}\left(\hat{\alpha}_{n};\hat{\Lambda}_{n}\right)
	=\sup_{\alpha\in\Theta_{1}}\mathbb{H}_{1,n}^{\tau}\left(\alpha;\hat{\Lambda}_{n}\right),\\
	&\mathbb{H}_{2,n}\left(\hat{\beta}_{n};\hat{\alpha}_{n}\right)
	=\sup_{\beta\in\Theta_{2}}\mathbb{H}_{2,n}\left(\beta;\hat{\alpha}_{n}\right),
\end{align*}
where for every matrix $A$, $A^{T}$ is the transpose of $A$ and $A^{\otimes 2}=AA^{T}$, $\mathbb{H}_{1,n}^{\tau}$ and $\mathbb{H}_{2,n}$ are the adaptive quasi-likelihood functions of $\alpha$ and $\beta$ respectively defined in Section 3, $\tau\in\left(1,2\right]$ is a tuning parameter, and \citet{Nakakita-Uchida-2018a} show these estimators are asymptotically normal and especially the drift one is asymptotically efficient. To obtain the convergence rates of the estimators, it is necessary to see the composition of the quasi-likelihood functions. Both of them are function of local means of observation defined as
\begin{align*}
	\lm{Y}{j}=\frac{1}{p_{n}}\sum_{i=0}^{p_{n}-1}Y_{j\Delta_{n}+ih_{n}},\ j=0,\ldots,k_{n}-1,
\end{align*}
where $k_{n}$ is the number of partition given for observation, $p_{n}$ is that of observation in each partition, $\Delta_{n}=p_{n}h_{n}$ is the time interval which each partition has, and note that these parameters have the properties $k_{n}\to\infty$, $p_{n}\to\infty$ and $\Delta_{n}\to0$. 
Intuitively speaking, $k_{n}$ and $\Delta_{n}$ correspond to $n$ and $h_{n}$ in the observation scheme without exogenous noise, and divergence of $p_{n}$ works to eliminate the influence of noise by law of large numbers. Hence it should be also easy to understand that we have the asymptotic normality with the convergence rates $\sqrt{k_{n}}$ and $\sqrt{T_{n}}$ for $\alpha$ and $\beta$; that is,
\begin{align*}
	\left[\sqrt{k_{n}}\left(\hat{\alpha}_{n}-\alpha^{\star}\right),\sqrt{T_{n}}\left(\hat{\beta}_{n}-\beta^{\star}\right)\right]\rightarrow^{d}\xi,
\end{align*}
where $\xi$ is an $\left(m_{1}+m_{2}\right)$-dimensional Gaussian distribution with zero-mean.

The statistical inference for diffusion processes with discretised observation has been investigated in these decades: see \cite{Florens-Zmirou-1989}, \cite{Yoshida-1992}, \cite{Bibby-Sorensen-1995}, \cite{Kessler-1995,Kessler-1997}. In practice, it is necessary to argue whether exogenous noise exists in observation, and it has been pointed out that the observational noise, known as microstructure noise, certainly exists in high-frequency financial data which is one of the major disciplines where statistics for diffusion processes is applied. Inference for diffusions under such the noisy and discretised observation in fixed time interval $\left[0,1\right]$ is discussed by \cite{Jacod-et-al-2009}, and also \cite{Favetto-2014,Favetto-2016} examine same problem as our study and shows simultaneous ML-type estimation has consistency under the situation where the variance of noise is unknown and asymptotic normality under the situation where the variance is known. As mentioned above, \cite{Nakakita-Uchida-2018a} propose adaptive ML-type estimation which has asymptotic normality even if we do not know the variance of noise, and test for noise detection which succeeds in showing the real data \cite{NWTC} which is contaminated by observational noise.

Our study aims at polynomial type large deviation inequalities for statistical random fields and construction of the estimators with not only asymptotic normality as shown in \citet{Nakakita-Uchida-2018a} but also a certain type of convergence of moments. Asymptotic normality is well-known as one of the hopeful properties that estimators are expected to have; for instance, \cite{Nakakita-Uchida-2018b} utilise this result to compose likelihood-ratio-type statistics and related ones for parametric test and proves the convergence in distribution to a $\chi^2$-distribution under null hypothesis and consistency of the test under alternative one. However, it is also known that asymptotic normality is not sufficient to develop some discussion requiring convergence of moments such as information criterion. In concrete terms, it is necessary to shows the convergence of moments such as for every $f\in C\left(\R^{m_{1}}\times \R^{m_{2}}\right)$ with at most polynomial growth and adaptive ML-type estimator $\hat{\alpha}_{n}$ and $\hat{\beta}_{n}$,
\begin{align*}
	\mathbf{E}_{\vartheta^{\star}}
	\left[f\left(\sqrt{k_{n}}\left(\hat{\alpha}_{n}-\alpha^{\star}\right),\sqrt{T_{n}}\left(\hat{\beta}_{n}-\beta^{\star}\right)\right)\right]\to \mathbf{E}_{\theta^{\star}}\left[f\left(\xi\right)\right].
\end{align*}
This property is stronger than mere asymptotic normality since if we take $f$ as a bounded and continuous function, then indeed asymptotic normality follows.

To see the convergence of moments for adaptive ML-type estimator, we can utilise polynomial-type large deviation inequalities (PLDI) and quasi-likelihood analysis (QLA) proposed by \cite{Yoshida-2011} which have been widely used to discuss convergence of moments of not only ML-type estimation but also Bayes-type one in statistical inference for continuous-time stochastic processes. This approach is developed from the exponential-type large deviation and likelihood analysis introduced by \cite{Ibragimov-Hasminskii-1972,Ibragimov-Hasminskii-1973,Ibragimov-Hasminskii-1981}, and the polynomial-type one discussed by \cite{Kutoyants-1984,Kutoyants-1994,Kutoyants-2004}. \cite{Yoshida-2011} itself discusses convergence of moments in adaptive maximum-likelihood-type estimation, simultaneous Bayes-type one, and adaptive Bayes-type one for ergodic diffusions with $nh_{n}\to\infty$ and $nh_{n}^{2}\to0$. \cite{Uchida-Yoshida-2012,Uchida-Yoshida-2014} examine the same problem for adaptive ML-type and adaptive Bayes-type estimation for ergodic diffusions with more relaxed condition: $nh_{n}\to\infty$ and $nh_{n}^{p}\to0$ for some $p\ge2$. \cite{Ogihara-Yoshida-2011} study convergence of moments for parametric estimators against ergodic jump-diffusion processes in the scheme of $nh_{n}\to\infty$ and $nh_{n}^{2}\to0$. Other than diffusion processes or jump-diffusions, \cite{Clinet-Yoshida-2017} show PLDI for the quasi-likelihood function for ergodic point processes and the convergence of moments for the corresponding ML-type and Bayes-type estimators. As the applications of these discussions, \cite{Uchida-2010} composes AIC-type information criterion for ergodic diffusion processes, and \cite{Eguchi-Masuda-2018} propose BIC-type one for local-asymptotic quadratic statistical experiments including some schemes for diffusion processes. In this paper, we develop QLA for our ergodic diffusion plus noise model and propose the adaptive Bayes-type estimators of both drift and volatility parameters. Furthermore, we show the convergence of moments of both the adaptive ML-type estimators and the adaptive Bayes-type estimators for the ergodic diffusion plus noise model. Note that Bayes-type estimation itself is important to deal with non-linearity of parameters and multimodality of quasi-likelihood functions which sometimes appear in statistics for diffusion processes. In particular, the hybrid type estimators with initial Bayes-type estimators are considered for diffusion type processes, see \citet{Kamatani-Uchida-2015, Kaino-Uchida-2018a, Kaino-Uchida-2018b}, and references therein. Moreover, as an application of the Bayes-type estimation proposed in this paper, \citet{Kaino-et-al-2018} study the hybrid estimators with initial Bayes-type estimators for our ergodic diffusion plus noise model and give an example and simulation results of the hybrid estimator.

\section{Notation and assumption}

We set the following notations.
\begin{itemize}
	\item For every matrix $A$, $A^{T}$ is the transpose of $A$, and $A^{\otimes 2}:=AA^{T}$.
	\item For every set of matrices $A$ and $B$ whose dimensions coincide, $A\left[B\right]:=\mathrm{tr}\left(AB^{T}\right)$. Moreover, for any $m\in\mathbf{N}$, $A\in\R^{m}\otimes\R^{m}$ and $u,v\in\R^{m}$, $A\left[u,v\right]:=v^{T}Au$.
	\item Let us denote the $\ell$-th element of any vector $v$ as $v^{\left(\ell\right)}$ and $\left(\ell_{1},\ell_{2}\right)$-th one of any matrix $A$ as $A^{\left(\ell_{1},\ell_{2}\right)}$.
	\item For any vector $v$ and any matrix $A$, $\left|v\right|:=\sqrt{\mathrm{tr}\left(v^{T}v\right)}$ and $\left\|A\right\|:=\sqrt{\mathrm{tr}\left(A^{T}A\right)}$.
	\item For every $p>0$, $\left\|\cdot\right\|_{p}$ is the $L^{p}\left(P_{\theta^{\star}}\right)$-norm.
	\item $A\left(x,\alpha\right):=a\left(x,\alpha\right)^{\otimes 2}$, $a\left(x\right):=a\left(x,\alpha^{\star}\right)$, $A\left(x\right):=A\left(x,\alpha^{\star}\right)$ and $b\left(x\right):=b\left(x,\beta^{\star}\right)$.
	\item For given $\tau\in\left(1,2\right]$, $p_{n}:=h_{n}^{-1/\tau}$, $\Delta_{n}:=p_{n}h_{n}$, and $k_{n}:=n/p_{n}$, and we define the sequence of local means such that
	\begin{align*}
		\lm{Z}{j}=\frac{1}{p_{n}}\sum_{i=0}^{p_{n}-1}Z_{j\Delta_n+ih_{n}},\ j=0,\cdots,k_{n}-1,
	\end{align*}
	where $\left\{Z_{ih_{n}}\right\}_{i=0,\ldots,n}$ indicates an arbitrary sequence defined on the mesh
	$\left\{ih_{n}\right\}_{i=0,\ldots,n}$ such as $\left\{Y_{ih_{n}}\right\}_{i=0,\ldots,n}$, $\left\{X_{ih_{n}}\right\}_{i=0,\ldots,n}$ and $\left\{\varepsilon_{ih_{n}}\right\}_{i=0,\ldots,n}$.
\end{itemize}

\begin{remark}
Since the observation is masked by the exogenous noise, it should be transformed to obtain the undermined process $\left\{X_{t}\right\}_{t\ge0}$. As illustrated by \cite{Nakakita-Uchida-2018a}, the sequence $\left\{\lm{Y}{j}\right\}_{j=0,\ldots,k_{n}-1}$ can extract the state of the latent process $\left\{X_{t}\right\}_{t\ge0}$  in the sense of the statement of Lemma \ref{ApproxLM}.
\end{remark}
\begin{itemize}
	\item $\mathcal{G}_{t}:=\sigma\left(x_{0},w_{s}:s\le t\right)$, $\mathcal{G}_{j,i}^{n}:=\mathcal{G}_{j\Delta_{n}+ih_{n}}$,
	$\mathcal{G}_{j}^{n}:=\mathcal{G}_{j,0}^{n}$, 
	$\mathcal{A}_{j,i}^{n}:=\sigma\left(\varepsilon_{\ell h_{n}}:\ell \le jp_{n}+i-1\right)$, $\mathcal{A}_{j}^{n}:=\mathcal{A}_{j,0}^{n}$, $\mathcal{H}_{j,i}^{n}:=\mathcal{G}_{j,i}^{n}\vee \mathcal{A}_{j,i}^{n}$
	and $\mathcal{H}_{j}^{n}:=\mathcal{H}_{j,0}^{n}$.
	\item We define the real-valued function as for $l_{1},l_{2},l_{3},l_{4}=1,\ldots,d$:
	\begin{align*}
	&V\left((l_1,l_2),(l_3,l_4)\right)\\
	&:=\sum_{k=1}^{d}\left(\Lambda_{\star}^{1/2}\right)^{(l_1,k)}\left(\Lambda_{\star}^{1/2}\right)^{(l_2,k)}\left(\Lambda_{\star}^{1/2}\right)^{(l_3,k)}\left(\Lambda_{\star}^{1/2}\right)^{(l_4,k)}
	\left(\mathbf{E}_{\theta^{\star}}\left[\left|\epsilon_{0}^{\left(k\right)}\right|^4\right]-3\right)\\
	&\qquad+\frac{3}{2}\left(\Lambda_{\star}^{(l_1,l_3)}\Lambda_{\star}^{(l_2,l_4)}+\Lambda_{\star}^{(l_1,l_4)}\Lambda_{\star}^{(l_2,l_3)}\right),
	\end{align*}
	and with the function $\sigma$ as for $i=1,\ldots,d$ and $j=i,\ldots,d$,
	\begin{align*}
		\sigma\left(i,j\right):=\begin{cases}
		j&\text{ if }i=1,\\
		\sum_{\ell=1}^{i-1}\left(d-\ell+1\right)+j-i+1 & \text{ if }i>1,
		\end{cases}
	\end{align*}
	we define the matrix $W_{1}$ as for $i_{1},i_{2}=1,\ldots,d(d+1)/2$,
	\begin{align*}
		W_{1}^{\left(i_{1},i_{2}\right)}:=V\left(\sigma^{-1}\left(i_{1}\right),\sigma^{-1}\left(i_{2}\right)\right).
	\end{align*}
	\item Let
	\begin{align*}
	&\left\{B_{\kappa}(x)\left|\kappa=1,\ldots,m_1,\ B_{\kappa}=(B_{\kappa}^{(j_1,j_2)})_{j_1,j_2}\right.\right\},\\
	&\left\{f_{\lambda}(x)\left|\lambda=1,\ldots,m_2,\ f_{\lambda}=(f^{(1)}_{\lambda},\ldots,f^{(d)}_{\lambda})\right.\right\}
	\end{align*}
	be
	sequences of $\R^d\otimes \R^d$-valued functions and $\R^d$-valued ones respectively such that the components of themselves and their derivative with respect to $x$ are polynomial growth functions for all $\kappa$ and $\lambda$. 
	Then we define the following matrix-valued functionals, for $\bar{B}_{\kappa}:=\frac{1}{2}\left(B_{\kappa}+B_{\kappa}^T\right)$,
	\begin{align*}
	&\left(W_2^{(\tau)}\left(\left\{B_{\kappa}:\kappa=1,\ldots,m_{1}\right\}\right)\right)^{(\kappa_1,\kappa_2)}\\
	&\qquad:=\begin{cases}
		\nu\left(\mathrm{tr}\left\{\left(\bar{B}_{\kappa_1}A\bar{B}_{\kappa_2}A\right)(\cdot)\right\}\right) &\text{ if }\tau\in(1,2),\\
		\nu\left(\mathrm{tr}\left\{\left(\bar{B}_{\kappa_1}A\bar{B}_{\kappa_2}A+4\bar{B}_{\kappa_1}A\bar{B}_{\kappa_2}\Lambda_{\star}+12\bar{B}_{\kappa_1}\Lambda_{\star}\bar{B}_{\kappa_2}\Lambda_{\star}\right)(\cdot)\right\}\right)
		&\text{ if }\tau=2,
		\end{cases}\\
	&\left(W_3(\left\{f_{\lambda}:\lambda=1,\ldots,m_{2}\right\})\right)^{(\lambda_1,\lambda_2)}\\
	&\qquad:=
		\nu\left(\left(f_{\lambda_1}A\left(f_{\lambda_2}\right)^T\right)(\cdot)\right),
	\end{align*}
	where $\nu=\nu_{\theta^{\star}}$ is the invariant measure of $X_{t}$ discussed in the following assumption [A1]-(iv), and for all function $f$ on $\R^{d}$, $\nu\left(f\left(\cdot\right)\right):=\int_{\R^{d}} f\left(x\right)\nu\left(\mathrm{d}x\right)$.
\end{itemize}
With respect to $X_{t}$, we assume the following conditions.
\begin{itemize}
\item[{[A1]}]
\begin{itemize}
\item[(i)] $\inf_{x,\alpha}\det A\left(x,\alpha\right)>0$.
\item[(ii)] For some constant $C$, for all $x_{1},x_{2}\in\R^{d}$,
\begin{align*}
	\sup_{\alpha\in\Theta_{1}}\left\|a\left(x_{1},\alpha\right)-a\left(x_{2},\alpha\right)\right\|+
	\sup_{\beta\in\Theta_{2}}\left|b\left(x_{1},\beta\right)-b\left(x_{2},\beta\right)\right|\le C\left|x_{1}-x_{2}\right|
\end{align*}
\item[(iii)] For all $p\ge0$, $\sup_{t\ge0}\mathbf{E}_{\theta^{\star}}\left[\left|X_{t}\right|^{p}\right]<\infty$.
\item[(iv)] There exists an unique invariant measure $\nu=\nu_{0}$ on $\left(\R^{d},\mathcal{B}\left(\R^{d}\right)\right)$ and for all $p\ge1$ and $f\in L^{p}\left(\nu\right)$ with polynomial growth,
\begin{align*}
\frac{1}{T}\int_{0}^{T}f\left(X_{t}\right)\mathrm{d}t\to^{P}\int_{\R^{d}}f\left(x\right)\nu\left(\mathrm{d}x\right).
\end{align*}
\item[(v)] For any polynomial growth function $g:\R^{d}\to\R$  satisfying $\int_{R^{d}}g\left(x\right)\nu\left(\mathrm{d}x\right) = 0$, there exist $G(x)$, $\partial_{x^{\left(i\right)}}G(x)$
with at most polynomial growth for $i=1,\ldots,d$ such that for all $x\in\R^{d}$,
\begin{align*}
L_{\theta^{\star}}G\left(x\right)=-g\left(x\right),
\end{align*}
where $L_{\theta^{\star}}$ is the infinitesimal generator of $X_{t}$.
\end{itemize}
\end{itemize}

\begin{remark}
\cite{Paradoux-Veretennikov-2001} show a sufficient condition for [A1]-(v). \cite{Uchida-Yoshida-2012} also introduce the sufficient condition for [A1]-(iii)--(v) assuming [A1]-(i)--(ii), $\sup_{x,\alpha}A\left(x,\alpha\right)<\infty$ and $^\exists c_{0}>0$, $M_{0}>0$ and $\gamma\ge 0$ such that for all $\beta\in\Theta_{2}$ and $x\in\R^{d}$ satisfying $\left|x\right|\ge M_{0}$,
\begin{align*}
	\frac{1}{\left|x\right|}x^{T}b\left(x,\beta\right)&\le -c_{0}\left|x\right|^{\gamma}.
\end{align*}
\end{remark}

\begin{itemize}
\item[{[A2]}] There exists $C>0$ such that $a:\R^{d}\times \Theta_{1}\to \R^{d}\otimes \R^{r}$ and $b:\R^{d}\times \Theta_{2}\to \R^{d}$ have continuous derivatives satisfying
\begin{align*}
\sup_{\alpha\in\Theta_{1}}\left|\partial_{x}^{j}\partial_{\alpha}^{i}a\left(x,\alpha\right)\right|&\le
C\left(1+\left|x\right|\right)^{C},\ 0\le i\le 4,\ 0\le j\le 2,\\
\sup_{\beta\in\Theta_{2}}\left|\partial_{x}^{j}\partial_{\beta}^{i}b\left(x,\beta\right)\right|&\le C\left(1+\left|x\right|\right)^{C},\ 0\le i\le 4,\ 0\le j\le 2.
\end{align*}
\end{itemize}

With the invariant measure $\nu$, we define
{\small\begin{align*}
\mathbb{Y}_{1}^{\tau}\left(\alpha;\vartheta^{\star}\right)&:=-\frac{1}{2}\int \left\{\mathrm{tr}\left(A^{\tau}\left(x,\alpha,\Lambda_{\star}\right)^{-1}A^{\tau}\left(x,\alpha^{\star},\Lambda_{\star}\right)-I_{d}\right)+\log\frac{\det A^{\tau}\left(x,\alpha,\Lambda_{\star}\right)}{\det A^{\tau}\left(x,\alpha^{\star},\Lambda_{\star}\right)}\right\}\nu\left(\mathrm{d}x\right),\\
\mathbb{Y}_{2}\left(\beta;\vartheta^{\star}\right)&:=-\frac{1}{2}
\int A\left(x,\alpha^{\star}\right)^{-1}\left[\left(b\left(x,\beta\right)-b\left(x,\beta^{\star}\right)\right)^{\otimes 2}\right]\nu\left(\mathrm{d}x\right),
\end{align*}}
where $A^{\tau}\left(x,\alpha,\Lambda\right):=A\left(x,\alpha\right)+3\Lambda\mathbf{1}_{\left\{2\right\}}\left(\tau\right)$. For these functions, let us assume the following identifiability conditions hold.

\begin{itemize}
\item[{[A3]}] For all $\tau\in\left(1,2\right]$, there exists a constant $\chi\left(\alpha^{\star}\right)>0$ such that $\mathbb{Y}_{1}^{\tau}\left(\alpha;\theta^{\star}\right)\le -\chi\left(\theta^{\star}\right)\left|\alpha-\alpha^{\star}\right|^{2}$ for all $\alpha\in\Theta_{1}$.
\item[{[A4]}] For all $\tau\in\left(1,2\right]$, there exists a constant $\chi'\left(\beta^{\star}\right)>0$ such that $\mathbb{Y}_{2}\left(\beta;\theta^{\star}\right)\le -\chi'\left(\theta^{\star}\right)\left|\beta-\beta^{\star}\right|^{2}$ for all $\beta\in\Theta_{2}$.
\end{itemize}
The next assumption is with respect to the moments of noise.
\begin{itemize}
\item[{[A5]}] For any $k > 0$, $\varepsilon_{ih_{n}}$ has $k$-th moment and the components of $\varepsilon_{ih_{n}}$ are independent of the other
components for all $i$, $\left\{w_{t}\right\}_{t\ge0}$ and $x_{0}$. In addition, for all odd integer $k$, $i=0,\ldots,n$, $n\in\mathbf{N}$, and $\ell=1,\ldots,d$, $\mathbf{E}_{\theta^{\star}}\left[\left(\varepsilon_{ih_{n}}^{\left(\ell\right)}\right)^{k}\right]=0$, and $\mathbf{E}_{\theta^{\star}}\left[\varepsilon_{ih_{n}}^{\otimes 2}\right]=I_{d}$.
\end{itemize}

The assumption below determines the balance of convergence or divergence of several parameters. Note that $\tau$ is a tuning parameter and hence we can control it arbitrarily in its space $\left(1,2\right]$.
\begin{itemize}
\item[{[A6]}] $p_{n}=h_{n}^{-1/\tau}$, $\tau\in\left(1,2\right]$, $h_{n}\to0$, $T_{n}=nh_{n}\to\infty$, $k_{n}=n/p_{n}\to\infty$, $k_{n}\Delta_{n}^{2}\to0$ for $\Delta_{n}:=p_{n}h_{n}$. Furthermore, there exists $\epsilon_{0}>0$ such that $nh_{n}\ge k_{n}^{\epsilon_{0}}$ for sufficiently large $n$.
\end{itemize}

\begin{remark}\label{RemarkCentred}
Let us denote $\epsilon_{1}=\epsilon_{0}/2$ and $f\in \mathcal{C}^{1,1}\left(\R^{d}\times\Xi\right)$ where $f$ and the components of their derivatives are polynomial growth with respect to $x$ uniformly in $\vartheta\in\Xi$. Then the discussion in \cite{Uchida-2010} verifies under [A1] and [A6], for all $M>0$,
\begin{align*}
	\sup_{n\in\mathbf{N}}\mathbf{E}_{\theta^{\star}}\left[\sup_{\vartheta\in\Xi}\left(k_{n}^{\epsilon_{1}}\left|\frac{1}{k_{n}}\sum_{j=1}^{k_{n}-2}f\left(X_{j\Delta_{n}},\vartheta\right)-\int_{R^{d}}f\left(x,\vartheta\right)\nu\left(\mathrm{d}x\right)\right|\right)^{M}\right]<\infty.
\end{align*}
\end{remark}

\section{Quasi-likelihood analysis}

First of all, we introduce and analyse some quasi-likelihood functions and estimators which are defined in \cite{Nakakita-Uchida-2018a}. The quasi-likelihood functions for the diffusion parameter $\alpha$ and the drift one $\beta$ using this sequence are as follows:
{\begin{align*}
&\mathbb{H}_{1,n}^{\tau}\left(\alpha;\Lambda\right)
:=-\frac{1}{2}\sum_{j=1}^{k_{n}-2}
\left(\left(\frac{2}{3}\Delta_{n}A_{n}^{\tau}\left(\lm{Y}{j-1},\alpha,\Lambda\right)\right)^{-1}\left[\left(\lm{Y}{j+1}-\lm{Y}{j}\right)^{\otimes 2}\right]+\log\det A_{n}^{\tau}\left(\lm{Y}{j-1},\alpha,\Lambda\right)\right),\\
&\mathbb{H}_{2,n}\left(\beta;\alpha\right)
:=-\frac{1}{2}\sum_{j=1}^{k_{n}-2}
\left(\left(\Delta_{n}A\left(\lm{Y}{j-1},\alpha\right)\right)^{-1}\left[\left(\lm{Y}{j+1}-\lm{Y}{j}-\Delta_{n}b\left(\lm{Y}{j-1},\beta\right)\right)^{\otimes 2}\right]\right),
\end{align*}} where $A_{n}^{\tau}\left(x,\alpha,\Lambda\right):=A\left(x,\alpha\right)+3\Delta_{n}^{\frac{2-\tau}{\tau-1}}\Lambda$. 
We set the adaptive ML-type estimator $\hat{\Lambda}_{n}$, $\hat{\alpha}_{n}$ and $\hat{\beta}_{n}$ such that
\begin{align*}
\hat{\Lambda}_{n}&:=\frac{1}{2n}\sum_{i=0}^{n-1}\left(Y_{\left(i+1\right)h_{n}}-Y_{ih_{n}}\right)^{\otimes 2},\\
\mathbb{H}_{1,n}^{\tau}\left(\hat{\alpha}_{n};\hat{\Lambda}_{n}\right)
&=\sup_{\alpha\in\Theta_{1}}\mathbb{H}_{1,n}^{\tau}\left(\alpha;\hat{\Lambda}_{n}\right),\\
\mathbb{H}_{2,n}\left(\hat{\beta}_{n};\hat{\alpha}_{n}\right)
&=\sup_{\beta\in\Theta_{2}}\mathbb{H}_{2,n}\left(\beta;\hat{\alpha}_{n}\right).
\end{align*}
Assume that $\pi_{\ell}$, $\ell=1,2$ are continuous and $0<\inf_{\theta_{\ell}\in\Theta_{\ell}}\pi_{\ell}\left(\theta_{\ell}\right)<\sup_{\theta_{\ell}\in\Theta_{\ell}}\pi_{\ell}\left(\theta_{\ell}\right)<\infty$, and denote the adaptive Bayes-type estimators
\begin{align*}
\tilde{\alpha}_{n}&:=\left\{\int_{\Theta_{1}}\exp\left(\mathbb{H}_{1,n}^{\tau}\left(\alpha;\hat{\Lambda}_{n}\right)\right)\pi_{1}\left(\alpha\right)\mathrm{d}\alpha\right\}^{-1}\int_{\Theta_{1}}\alpha\exp\left(\mathbb{H}_{1,n}^{\tau}\left(\alpha;\hat{\Lambda}_{n}\right)\right)\pi_{1}\left(\alpha\right)\mathrm{d}\alpha,\\
\tilde{\beta}_{n}&:=\left\{\int_{\Theta_{2}}\exp\left(\mathbb{H}_{2,n}\left(\beta;\tilde{\alpha}_{n}\right)\right)\pi_{2}\left(\beta\right)\mathrm{d}\beta\right\}^{-1}\int_{\Theta_{2}}\beta\exp\left(\mathbb{H}_{2,n}\left(\beta;\tilde{\alpha}_{n}\right)\right)\pi_{2}\left(\beta\right)\mathrm{d}\beta.
\end{align*}

Our purpose is to show the polynomial-type large deviation inequalities for the quasi-likelihood functions defined above in the framework introduced by \cite{Yoshida-2011}, and the convergences of moments for these estimators as the application of them.
Let us denote the following statistical random fields for $u_{1}\in\R^{m_{1}}$ and $u_{2}\in\R^{m_{2}}$
\begin{align*}
	\mathbb{Z}_{1,n}^{\tau}\left(u_{1};\hat{\Lambda}_{n},\alpha^{\star}\right)&:=\exp\left(\mathbb{H}_{1,n}^{\tau}\left(\alpha^{\star}+k_{n}^{-1/2}u_{1};\hat{\Lambda}_{n}\right)-\mathbb{H}_{1,n}^{\tau}\left(\alpha^{\star};\hat{\Lambda}_{n}\right)\right),\\
	\mathbb{Z}_{2,n}^{\mathrm{ML}}\left(u_{2};\hat{\alpha}_{n},\beta^{\star}\right)&:=\exp\left(\mathbb{H}_{2,n}\left(\beta^{\star}+T_{n}^{-1/2}u_{2};\hat{\alpha}_{n}\right)-\mathbb{H}_{2,n}\left(\beta^{\star};\hat{\alpha}_{n}\right)\right),\\
	\mathbb{Z}_{2,n}^{\mathrm{Bayes}}\left(u_{2};\tilde{\alpha}_{n},\beta^{\star}\right)&:=\exp\left(\mathbb{H}_{2,n}\left(\beta^{\star}+T_{n}^{-1/2}u_{2};\tilde{\alpha}_{n}\right)-\mathbb{H}_{2,n}\left(\beta^{\star};\tilde{\alpha}_{n}\right)\right),
\end{align*}
and some sets
\begin{align*}
\mathbb{U}_{1,n}^{\tau}\left(\alpha^{\star}\right)&:=\left\{u_{1}\in\R^{m_{1}};\alpha^{\star}+k_{n}^{-1/2}u_{1}\in\Theta_{1}\right\},\\
\mathbb{U}_{2,n}\left(\beta^{\star}\right)&:=\left\{u_{2}\in\R^{m_{2}};\beta^{\star}+T_{n}^{-1/2}u_{2}\in\Theta_{2}\right\},
\end{align*}
and for $r\ge 0$,
\begin{align*}
V_{1,n}^{\tau}\left(r,\alpha^{\star}\right)&:=\left\{u_{1}\in\mathbb{U}_{1,n}^{\tau}\left(\alpha^{\star}\right);r\le \left|u_{1}\right|\right\},\\
V_{2,n}\left(r,\beta^{\star}\right)&:=\left\{u_{2}\in\mathbb{U}_{2,n}\left(\beta^{\star}\right);r\le \left|u_{2}\right|\right\}.
\end{align*}

We use the notation as \cite{Nakakita-Uchida-2018a} for the information matrices
\begin{align*}
\mathcal{I}^{\tau}\left(\vartheta^{\star}\right)&:=\mathrm{diag}\left\{W_{1}, \mathcal{I}^{(2,2),\tau},\mathcal{I}^{(3,3)}\right\}\left(\vartheta^{\star}\right),\\
\mathcal{J}^{\tau}\left(\vartheta^{\star}\right)&:=\mathrm{diag}\left\{I_{d(d+1)/2},\mathcal{J}^{(2,2),\tau},\mathcal{J}^{(3,3)}\right\}(\vartheta^{\star}),
\end{align*}
where for $i_1,i_2\in\left\{1,\ldots,m_1\right\}$,
\begin{align*}
\mathcal{I}^{(2,2),\tau}(\vartheta^{\star})&:=
W_2^{(\tau)}\left(\left\{\frac{3}{4}\left( A^{\tau}\right)^{-1}\left(\partial_{\alpha^{(k_1)}}A\right)\left(A^{\tau}\right)^{-1}(\cdot,\vartheta^{\star}):k_{1}=1,\ldots,m_{1}\right\}\right),\\
\mathcal{J}^{(2,2),\tau}(\vartheta^{\star})&:=
\left[\frac{1}{2}\nu\left(\mathrm{tr}\left\{\left(A^{\tau}\right)^{-1}\left(\partial_{\alpha^{(i_1)}}A\right)\left(A^{\tau}\right)^{-1}
	\left(\partial_{\alpha^{(i_2)}}A\right)\right\}(\cdot,\vartheta^{\star})\right)\right]_{i_1,i_2},
\end{align*}
and for $j_1,j_2\in\left\{1,\ldots,m_2\right\}$,
\begin{align*}
\mathcal{I}^{(3,3)}(\theta^{\star})&=\mathcal{J}^{(3,3)}(\theta^{\star}):=
\left[\nu\left(\left(A\right)^{-1}\left[\partial_{\beta^{(j_1)}}b,\partial_{\beta^{(j_2)}}b\right](\cdot,\theta^{\star})\right)\right]_{j_1,j_2}.
\end{align*}
We also denote $\hat{\theta}_{\varepsilon,n}:=\mathrm{vech}\hat{\Lambda}_{n}$ and $\theta_{\varepsilon}^{\star}:=\mathrm{vech}\Lambda_{\star}$.

\begin{theorem}\label{mainthm}
Under [A1]-[A6], we have the following results.
\begin{enumerate}
	\item The polynomial-type large deviation inequalities hold: for all $L>0$, there exists a constant $C\left(L\right)$ such that for all $r>0$,
	\begin{align*}
	P_{\theta^{\star}}\left[\sup_{u_{1}\in V_{1,n}^{\tau}\left(r,\alpha^{\star}\right)}\mathbb{Z}_{1,n}^{\tau}\left(u_{1};\hat{\Lambda}_{n},\alpha^{\star}\right)\ge e^{-r}\right] &\le \frac{C\left(L\right)}{r^{L}},\\
	P_{\theta^{\star}}\left[\sup_{u_{2}\in V_{2,n}\left(r,\beta^{\star}\right)}\mathbb{Z}_{2,n}^{\mathrm{ML}}\left(u_{2};\hat{\alpha}_{n},\beta^{\star}\right)\ge e^{-r}\right] &\le \frac{C\left(L\right)}{r^{L}},\\
	P_{\theta^{\star}}\left[\sup_{u_{2}\in V_{2,n}\left(r,\beta^{\star}\right)}\mathbb{Z}_{2,n}^{\mathrm{Bayes}}\left(u_{2};\tilde{\alpha}_{n},\beta^{\star}\right)\ge e^{-r}\right] &\le \frac{C\left(L\right)}{r^{L}}.
	\end{align*}
	\item The convergences of moment hold:
	\begin{align*}
		\mathbf{E}_{\theta^{\star}}\left[f\left(\sqrt{n}\left(\hat{\theta}_{\varepsilon,n}-\theta_{\varepsilon}^{\star}\right),
		\sqrt{k_{n}}\left(\hat{\alpha}_{n}-\alpha^{\star}\right),
		\sqrt{T_{n}}\left(\hat{\beta}_{n}-\beta^{\star}\right)\right)\right]&\to \mathbb{E}\left[f\left(\zeta_{0},\zeta_{1},\zeta_{2}\right)\right],\\
		\mathbf{E}_{\theta^{\star}}\left[f\left(\sqrt{n}\left(\hat{\theta}_{\varepsilon,n}-\theta_{\varepsilon}^{\star}\right),\sqrt{k_{n}}\left(\tilde{\alpha}_{n}-\alpha^{\star}\right),
		\sqrt{T_{n}}\left(\tilde{\beta}_{n}-\beta^{\star}\right)\right)\right]&\to \mathbb{E}\left[f\left(\zeta_{0},\zeta_{1},\zeta_{2}\right)\right],
	\end{align*}
	where
	\begin{align*}
		\left(\zeta_{0},\zeta_{1},\zeta_{2}\right)
		\sim N_{d\left(d+1\right)/2+m_{1}+m_{2}}\left(\mathbf{0},\left(\mathcal{J}^{\tau}\left(\vartheta^{\star}\right)\right)^{-1}\left(\mathcal{I}^{\tau}\left(\vartheta^{\star}\right)\right)\left(\mathcal{J}^{\tau}\left(\vartheta^{\star}\right)\right)^{-1}
		\right)
	\end{align*}
	and $f$ is an arbitrary continuous functions of at most polynomial growth.
\end{enumerate}
\end{theorem}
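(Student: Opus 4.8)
The plan is to realise each of the three random fields as an instance of the abstract setup of \cite{Yoshida-2011} and to verify his sufficient conditions for the polynomial-type large deviation inequality (PLDI), after which part (2) follows from his quasi-likelihood analysis. For the diffusion field I would expand $\log\mathbb{Z}_{1,n}^{\tau}(u_1;\hat\Lambda_n,\alpha^\star)$ to second order in $u_1=\sqrt{k_n}(\alpha-\alpha^\star)$ around $\alpha^\star$, isolating a score $\Delta_{1,n}[u_1]$, a quadratic form $-\tfrac12\mathcal{J}^{(2,2),\tau}(\vartheta^\star)[u_1^{\otimes2}]$, and a remainder. The foundational device is Lemma~\ref{ApproxLM}: it lets me replace each local mean $\lm{Y}{j}$ by the latent state $X_{j\Delta_n}$ up to controllable errors, so that the observation noise survives only through the bias corrections built into $A_n^\tau(x,\alpha,\Lambda)=A(x,\alpha)+3\Delta_n^{(2-\tau)/(\tau-1)}\Lambda$ and the variance factor $2/3$.

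The verification then splits into the usual two steps. First, the identifiability step: using Remark~\ref{RemarkCentred} to obtain ergodic averages with the polynomial rate $k_n^{\epsilon_1}$, I would show that $k_n^{-1}(\mathbb{H}_{1,n}^\tau(\alpha;\hat\Lambda_n)-\mathbb{H}_{1,n}^\tau(\alpha^\star;\hat\Lambda_n))$ converges to $\mathbb{Y}_1^\tau(\alpha;\vartheta^\star)$ uniformly in $\alpha$ with moment bounds on the supremal deviation, the required non-degeneracy $\mathbb{Y}_1^\tau(\alpha;\theta^\star)\le-\chi(\theta^\star)|\alpha-\alpha^\star|^2$ being exactly [A3]. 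Second, the central-limit step: I would prove $\sup_n\mathbf{E}_{\theta^\star}[|\Delta_{1,n}|^M]<\infty$ for all $M$ and that the normalised second derivative converges in probability to $-\mathcal{J}^{(2,2),\tau}(\vartheta^\star)$, with the score covariance identified as $\mathcal{I}^{(2,2),\tau}(\vartheta^\star)$. Both rest on a martingale decomposition of the increments $\lm{Y}{j+1}-\lm{Y}{j}$ relative to the filtration $\mathcal{H}_j^n$, on Burkholder--Davis--Gundy, and on the moment bounds from [A1]-(iii) and [A2]; the even-moment structure of the noise in [A5] is what produces the quartic tensor $V$ and hence $W_1$. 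These steps meet Yoshida's hypotheses and deliver the first inequality.

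The same scheme handles $\mathbb{Z}_{2,n}^{\mathrm{ML}}$ and $\mathbb{Z}_{2,n}^{\mathrm{Bayes}}$ with normalisation $\sqrt{T_n}$, limit field $\mathbb{Y}_2$, identifiability from [A4], and drift information $\mathcal{I}^{(3,3)}=\mathcal{J}^{(3,3)}$. The extra ingredient is the adaptive plug-in, since $\hat\alpha_n$ (respectively $\tilde\alpha_n$) stands in for $\alpha^\star$ inside $\mathbb{H}_{2,n}$. I would control it by a one-step expansion in the nuisance direction, exploiting that $\sqrt{k_n}(\hat\alpha_n-\alpha^\star)$ and $\sqrt{k_n}(\tilde\alpha_n-\alpha^\star)$ are bounded in every $L^M(P_{\theta^\star})$ --- itself a consequence of the PLDI for $\alpha$ through Yoshida's convergence of moments --- so that the plug-in perturbation is of smaller order uniformly on $V_{2,n}(r,\beta^\star)$. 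For the Bayes field this uses the tail integrability of $\mathbb{Z}_{1,n}^\tau$ furnished by part (1), which is precisely how the Bayes estimator inherits its moment bounds.

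For part (2) I would then invoke the quasi-likelihood analysis of \cite{Yoshida-2011}: the PLDI gives the uniform integrability of the normalised estimators, and combined with the joint weak convergence of the scores --- where $\sqrt{n}(\hat\theta_{\varepsilon,n}-\theta_\varepsilon^\star)$ is treated by a direct central limit theorem for the quadratic-variation-type estimator $\hat\Lambda_n$ with limiting covariance $W_1$ --- this yields convergence of moments to the centred Gaussian with covariance $(\mathcal{J}^\tau(\vartheta^\star))^{-1}\mathcal{I}^\tau(\vartheta^\star)(\mathcal{J}^\tau(\vartheta^\star))^{-1}$ for both estimator types; the block-diagonal form of $\mathcal{I}^\tau$ and $\mathcal{J}^\tau$ reflects the asymptotic orthogonality of the three blocks at the rates $\sqrt{n}$, $\sqrt{k_n}$, $\sqrt{T_n}$. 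I expect the main obstacle to lie in the moment and covariance bookkeeping for the local-means structure, where the diffusion increments and the i.i.d.\ noise must be disentangled within the filtration $\mathcal{H}_j^n$. The case $\tau=2$ is the most delicate, since the noise no longer vanishes from the leading order --- hence the extra $\Lambda_\star$ terms in $W_2^{(2)}$ --- so the martingale-plus-remainder accounting must retain, rather than discard, the noise contribution that is negligible in the regime $\tau\in(1,2)$.
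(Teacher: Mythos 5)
Your proposal is correct and follows essentially the same route as the paper: second-order expansion of each quasi-likelihood into score, quadratic form and remainder, verification of the conditions of Theorem 3 in \cite{Yoshida-2011} via the local-mean approximation (Lemma \ref{ApproxLM}), the ergodic rate of Remark \ref{RemarkCentred} and martingale/BDG estimates, with the adaptive plug-in controlled by the $L^{p}$-boundedness of $\sqrt{k_{n}}\left(\hat{\alpha}_{n}-\alpha^{\star}\right)$ and $\sqrt{k_{n}}\left(\tilde{\alpha}_{n}-\alpha^{\star}\right)$ obtained from the $\alpha$-level PLDI, and the Bayes denominator integrals handled through Lemma 2 of \cite{Yoshida-2011}. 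The only step you leave implicit that the paper spells out is that, for moment convergence of the Bayes-type estimators, finite-dimensional convergence of the random fields must be upgraded to convergence in $\mathcal{C}\left(B\left(R;\R^{d\left(d+1\right)/2+m_{1}+m_{2}}\right)\right)$ via a tightness bound on the $u$-derivatives of the log-fields before Theorem 10 of \cite{Yoshida-2011} applies, which is a detail internal to the QLA machinery you invoke.
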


\subsection{Evaluation for local means}

In the first place we give some evaluations related to local means. Some of the instruments are inherited from the previous studies by \cite{Nakakita-Uchida-2017} and \cite{Nakakita-Uchida-2018a}.
We define the following random variables:
\begin{align*}
\zeta_{j+1,n}:=
\frac{1}{p_{n}}\sum_{i=0}^{p_{n}-1}\int_{j\Delta_{n}+ih_{n}}^{\left(j+1\right)\Delta_{n}}\mathrm{d}w_{s},\quad
\zeta_{j+2,n}':=
\frac{1}{p_{n}}\sum_{i=0}^{p_{n}-1}\int_{\left(j+1\right)\Delta_{n}}^{\left(j+1\right)\Delta_{n}+ih_{n}}\mathrm{d}w_{s}.
\end{align*}

The next lemma is Lemma 11 in \cite{Nakakita-Uchida-2018a}.

\begin{lemma}\label{EvalZeta}
	$\zeta_{j+1,n}$ and $\zeta_{j+1,n}'$ are $\mathcal{G}_{j+1}^{n}$-measurable, independent of $\mathcal{G}_{j}^{n}$ and Gaussian.These variables have the next decompositions:
	\begin{align*}
	\zeta_{j+1,n}&=\frac{1}{p_{n}}\sum_{k=0}^{p_{n}-1}\left(k+1\right)\int_{j\Delta_{n}+kh_{n}}^{j\Delta_{n}+\left(k+1\right)h_{n}}\mathrm{d}w_{s},\\
	\zeta_{j+1,n}'&=\frac{1}{p_{n}}\sum_{k=0}^{p_{n}-1}\left(p_{n}-k-1\right)
	\int_{j\Delta_{n}+kh_{n}}^{j\Delta_{n}+\left(k+1\right)h_{n}}\mathrm{d}w_{s}.
	\end{align*}
	The evaluation of the following conditional expectations holds:
	\begin{align*}
	\mathbf{E}_{\theta^{\star}}\left[\zeta_{j,n}|\mathcal{G}_{j}^{n}\right]=\mathbf{E}_{\theta^{\star}}\left[\zeta_{j+1,n}'|\mathcal{G}_{j}^{n}\right]&=\mathbf{0},\\
	\mathbf{E}_{\theta^{\star}}\left[\zeta_{j+1,n}\left(\zeta_{j+1,n}\right)^{T}|\mathcal{G}_{j}^{n}\right]
	&=m_{n}\Delta_{n}I_{r},\\
	\mathbf{E}_{\theta^{\star}}\left[\zeta_{j+1,n}'\left(\zeta_{j+1,n}'\right)^{T}|\mathcal{G}_{j}^{n}\right]&=m_{n}'\Delta_{n}I_{r},\\
	\mathbf{E}_{\theta^{\star}}\left[\zeta_{j+1,n}\left(\zeta_{j+1,n}'\right)^{T}|\mathcal{G}_{j}^{n}\right]&=\chi_{n}\Delta_{n}I_{r},
	\end{align*}
	where $m_{n}=\left(\frac{1}{3}+\frac{1}{2p_{n}}+\frac{1}{6p_{n}^2}\right)$, $m_{n}'=\left(\frac{1}{3}-\frac{1}{2p_{n}}+\frac{1}{6p_{n}^2}\right)$, and $\chi_{n}=\frac{1}{6}\left(1-\frac{1}{p_{n}^2}\right)$.
\end{lemma}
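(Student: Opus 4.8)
The plan is to reduce everything to the elementary Wiener increments on the finest mesh, $\Delta w_{k}:=\int_{j\Delta_{n}+kh_{n}}^{j\Delta_{n}+\left(k+1\right)h_{n}}\mathrm{d}w_{s}$ for $k=0,\ldots,p_{n}-1$, and then exploit their orthogonality. First I would establish the two decompositions. Recalling $\left(j+1\right)\Delta_{n}=j\Delta_{n}+p_{n}h_{n}$ since $\Delta_{n}=p_{n}h_{n}$, each inner integral in $\zeta_{j+1,n}$ splits as $\int_{j\Delta_{n}+ih_{n}}^{\left(j+1\right)\Delta_{n}}\mathrm{d}w_{s}=\sum_{k=i}^{p_{n}-1}\Delta w_{k}$, and likewise the inner integral defining $\zeta_{j+1,n}'$ equals $\sum_{k=0}^{i-1}\Delta w_{k}$. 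Substituting into the defining averages and interchanging the order of summation, I count for each fixed $k$ the number of contributing indices $i$: the indices $i=0,\ldots,k$ give the factor $k+1$ for $\zeta_{j+1,n}$, while the indices $i=k+1,\ldots,p_{n}-1$ give the factor $p_{n}-k-1$ for $\zeta_{j+1,n}'$. This yields exactly the stated representations.

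The measurability, independence, and Gaussianity are then immediate. Both variables are finite linear combinations of the increments $\left\{\Delta w_{k}\right\}_{k=0}^{p_{n}-1}$, all supported on $[j\Delta_{n},\left(j+1\right)\Delta_{n}]$; hence they are $\mathcal{G}_{j+1}^{n}$-measurable, independent of $\mathcal{G}_{j}^{n}=\mathcal{G}_{j\Delta_{n}}$ by the independence of Wiener increments from the past, and jointly Gaussian as linear functionals of a Gaussian process. Because of this independence, every conditional expectation given $\mathcal{G}_{j}^{n}$ coincides with the corresponding unconditional one; in particular the conditional first moments of $\zeta_{j+1,n}$ and $\zeta_{j+1,n}'$ vanish, since $\mathbf{E}_{\theta^{\star}}\left[\Delta w_{k}\right]=\mathbf{0}$.

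For the second-moment identities I would use the orthogonality relation $\mathbf{E}_{\theta^{\star}}\left[\Delta w_{k}\left(\Delta w_{k'}\right)^{T}\right]=\delta_{kk'}h_{n}I_{r}$, which annihilates all cross terms and reduces each conditional covariance to a single weighted sum of copies of $h_{n}I_{r}$. Concretely, $\mathbf{E}_{\theta^{\star}}\left[\zeta_{j+1,n}\zeta_{j+1,n}^{T}\mid\mathcal{G}_{j}^{n}\right]=p_{n}^{-2}\sum_{k=0}^{p_{n}-1}\left(k+1\right)^{2}h_{n}I_{r}$, and analogously the primed covariance carries weight $\left(p_{n}-k-1\right)^{2}$ and the cross covariance carries weight $\left(k+1\right)\left(p_{n}-k-1\right)$. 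It then remains to evaluate the three elementary sums via $\sum_{m=1}^{p_{n}}m^{2}=p_{n}\left(p_{n}+1\right)\left(2p_{n}+1\right)/6$ and $\sum_{m=1}^{p_{n}}m=p_{n}\left(p_{n}+1\right)/2$, and to substitute $h_{n}=\Delta_{n}/p_{n}$; a short simplification turns the three prefactors into $m_{n}$, $m_{n}'$, and $\chi_{n}$ respectively.

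Since the entire argument is elementary stochastic calculus for a Wiener process, I expect no genuine analytic obstacle. The only point demanding care is the combinatorial bookkeeping: getting the summation limits and the multiplicity counts right after the interchange of sums, and carrying the arithmetic through to the exact closed forms $m_{n}=\tfrac{1}{3}+\tfrac{1}{2p_{n}}+\tfrac{1}{6p_{n}^{2}}$, $m_{n}'=\tfrac{1}{3}-\tfrac{1}{2p_{n}}+\tfrac{1}{6p_{n}^{2}}$, and $\chi_{n}=\tfrac{1}{6}\left(1-\tfrac{1}{p_{n}^{2}}\right)$.
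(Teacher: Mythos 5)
Your proof is correct, and it is the natural argument: the paper itself gives no proof of this lemma (it simply cites Lemma~11 of Nakakita--Uchida, 2018a), but the route you take --- rewriting both variables as weighted sums of the disjoint increments $\Delta w_{k}$ by interchanging the order of summation, then using independence of increments from $\mathcal{G}_{j}^{n}$ and the orthogonality relation $\mathbf{E}_{\theta^{\star}}[\Delta w_{k}(\Delta w_{k'})^{T}]=\delta_{kk'}h_{n}I_{r}$ together with the elementary sums $\sum_{m=1}^{p_n}m$ and $\sum_{m=1}^{p_n}m^{2}$ --- is exactly the argument the stated decompositions are designed to encode, and your combinatorial counts and closed forms for $m_{n}$, $m_{n}'$, $\chi_{n}$ all check out. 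You also correctly read $\zeta_{j,n}$ in the first display of the lemma as the obvious typo for $\zeta_{j+1,n}$; as written, $\zeta_{j,n}$ is $\mathcal{G}_{j}^{n}$-measurable and its conditional expectation would be itself rather than $\mathbf{0}$.
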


The next lemma can be obtained with same discussion as Proposition 12 in \cite{Nakakita-Uchida-2018a}.

\begin{lemma}\label{ApproxLM}
	Assume the component of the function $f\in C^{1}\left(\R^{d}\times \Xi;\ \R\right)$ and $\partial_{x}f$ are polynomial growth functions uniformly in $\vartheta\in\Xi$. 
	For all $p\ge 1$, there exists $C\left(p\right)>0$ such that for all $n\in\mathbf{N}$,
	\begin{align*}
	\sup_{j=0,\ldots,k_{n}-1}
	\left\|\sup_{\vartheta\in\Xi}
	\left|f\left(\lm{Y}{j},\vartheta\right)-f\left(X_{j\Delta_{n}},\vartheta\right)\right|
	\right\|_{p} \le C\left(p\right)\Delta_{n}^{1/2}.
	\end{align*}
\end{lemma}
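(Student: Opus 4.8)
The plan is to prove the statement by first controlling the $L^{p}(P_{\theta^{\star}})$-distance between the local mean $\lm{Y}{j}$ and the value $X_{j\Delta_{n}}$, and then transferring that control through $f$ by means of its $C^{1}$-regularity. Writing $Y_{ih_{n}}=X_{ih_{n}}+\Lambda_{\star}^{1/2}\varepsilon_{ih_{n}}$ and recalling the definition of the local mean, I would begin from the decomposition
\begin{align*}
\lm{Y}{j}-X_{j\Delta_{n}}=\frac{1}{p_{n}}\sum_{i=0}^{p_{n}-1}\left(X_{j\Delta_{n}+ih_{n}}-X_{j\Delta_{n}}\right)+\frac{1}{p_{n}}\sum_{i=0}^{p_{n}-1}\Lambda_{\star}^{1/2}\varepsilon_{j\Delta_{n}+ih_{n}},
\end{align*}
which separates a diffusion-increment term from a noise-averaging term. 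The key point is that each of these is small in every $L^{q}(P_{\theta^{\star}})$-norm, uniformly in $j$.

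For the diffusion term, the standard moment estimate for the SDE solution---obtained from the Burkholder--Davis--Gundy inequality applied to the stochastic integral, together with the polynomial-growth bounds in [A2] and the uniform moment bound $\sup_{t\ge0}\mathbf{E}_{\theta^{\star}}[|X_{t}|^{q}]<\infty$ of [A1]-(iii)---gives $\|X_{j\Delta_{n}+ih_{n}}-X_{j\Delta_{n}}\|_{q}\le C(q)(ih_{n})^{1/2}\le C(q)\Delta_{n}^{1/2}$ for each $i\le p_{n}-1$, so by Minkowski's inequality the whole average is $O(\Delta_{n}^{1/2})$. For the noise term, since by [A5] the $\varepsilon$'s are i.i.d., centred, of variance $I_{d}$ and possess all moments, a Rosenthal- (or Marcinkiewicz--Zygmund-) type inequality yields $\|\frac{1}{p_{n}}\sum_{i=0}^{p_{n}-1}\Lambda_{\star}^{1/2}\varepsilon_{j\Delta_{n}+ih_{n}}\|_{q}\le C(q)p_{n}^{-1/2}$. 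Under [A6] one has $p_{n}^{-1}=h_{n}^{1/\tau}\le h_{n}^{1-1/\tau}=\Delta_{n}$ for $\tau\in(1,2]$ and $h_{n}<1$, hence $p_{n}^{-1/2}\le\Delta_{n}^{1/2}$, so both terms are bounded by $C(q)\Delta_{n}^{1/2}$. Consequently $\sup_{j}\|\lm{Y}{j}-X_{j\Delta_{n}}\|_{q}\le C(q)\Delta_{n}^{1/2}$ for every $q\ge1$.

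To pass from the increment to $f$, I would use the fundamental theorem of calculus in the $x$-variable: for each fixed $\vartheta$,
\begin{align*}
f\left(\lm{Y}{j},\vartheta\right)-f\left(X_{j\Delta_{n}},\vartheta\right)=\int_{0}^{1}\partial_{x}f\left(X_{j\Delta_{n}}+s\left(\lm{Y}{j}-X_{j\Delta_{n}}\right),\vartheta\right)\mathrm{d}s\cdot\left(\lm{Y}{j}-X_{j\Delta_{n}}\right).
\end{align*}
Because $\partial_{x}f$ has polynomial growth uniformly in $\vartheta$, there are constants with $\sup_{\vartheta\in\Xi}|\partial_{x}f(x,\vartheta)|\le C(1+|x|)^{C}$, so the supremum over $\vartheta$ of the left-hand side is dominated---without any residual dependence on $\vartheta$---by $C(1+|X_{j\Delta_{n}}|+|\lm{Y}{j}-X_{j\Delta_{n}}|)^{C}|\lm{Y}{j}-X_{j\Delta_{n}}|$. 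Applying Hölder's inequality then splits the $L^{p}$-norm into the product of the $L^{2p}$-norm of the polynomial factor and the $L^{2p}$-norm of the increment; the former is bounded uniformly in $j$ and $n$ by [A1]-(iii) together with the increment bounds just established, and the latter is $O(\Delta_{n}^{1/2})$. This yields the claimed bound $C(p)\Delta_{n}^{1/2}$, uniform in $j$.

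The moment bookkeeping is routine; the one point that needs care is controlling the supremum over $\vartheta$ inside the $L^{p}$-norm. This is handled precisely because the only $\vartheta$-dependence in the pointwise estimate sits inside $\partial_{x}f$, whose polynomial-growth constant is uniform in $\vartheta$ by hypothesis, so the dominating random variable is free of $\vartheta$ and the supremum may be taken harmlessly. No measurable-selection or continuity-in-$\vartheta$ argument beyond this is required, which is why the conclusion follows by the same reasoning as Proposition 12 of \cite{Nakakita-Uchida-2018a}.
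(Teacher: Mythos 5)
Your proposal is correct and is essentially the argument the paper relies on: the paper gives no inline proof, deferring to Proposition 12 of \cite{Nakakita-Uchida-2018a}, and that ``same discussion'' is exactly your route --- split $\lm{Y}{j}-X_{j\Delta_{n}}$ into the averaged diffusion increments (bounded in $L^{q}$ by $C(q)\Delta_{n}^{1/2}$ via BDG and [A1]--[A2]) and the averaged noise (bounded by $C(q)p_{n}^{-1/2}\le C(q)\Delta_{n}^{1/2}$ since $p_{n}^{-1}=h_{n}^{1/\tau}\le h_{n}^{1-1/\tau}=\Delta_{n}$ for $\tau\in(1,2]$), then transfer through $f$ by the mean-value form of Taylor's theorem, dominate $\sup_{\vartheta}$ by the $\vartheta$-free polynomial-growth envelope of $\partial_{x}f$, and finish with H\"{o}lder's inequality and [A1]-(iii). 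The only cosmetic point is that the step $h_{n}^{1/\tau}\le h_{n}^{1-1/\tau}$ needs $h_{n}\le1$, which you note and which costs nothing since $h_{n}\to0$ lets the constant $C(p)$ absorb the finitely many remaining indices.
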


\begin{lemma}\label{ApproxSumLM}
	Assume the component of the function $f\in C^{1}\left(\R^{d}\times \Xi;\ \R\right)$ and $\partial_{x}f$ are polynomial growth functions uniformly in $\vartheta\in\Xi$. For all $p\ge 1$, there exists $C\left(p\right)>0$ such that for all $n\in\mathbf{N}$
	\begin{align*}
	\left\|\sup_{\vartheta\in\Xi}
	\left|\frac{1}{k_{n}}\sum_{j=1}^{k_{n}-2}f\left(\lm{Y}{j},\vartheta\right)-\frac{1}{k_{n}}\sum_{j=1}^{k_{n}-2}f\left(X_{j\Delta_{n}},\vartheta\right)\right|
	\right\|_{p} \le C\left(p\right)\Delta_{n}^{1/2}.
	\end{align*}
\end{lemma}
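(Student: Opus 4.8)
The plan is to deduce this averaged estimate directly from the pointwise-in-$j$ bound of Lemma \ref{ApproxLM} by interchanging the supremum with the sum and applying Minkowski's inequality; no new analysis of the local means is required. First I would note that for each fixed $\vartheta\in\Xi$ the triangle inequality gives
\[
\left|\frac{1}{k_{n}}\sum_{j=1}^{k_{n}-2}\left(f\left(\lm{Y}{j},\vartheta\right)-f\left(X_{j\Delta_{n}},\vartheta\right)\right)\right|\le \frac{1}{k_{n}}\sum_{j=1}^{k_{n}-2}\sup_{\vartheta'\in\Xi}\left|f\left(\lm{Y}{j},\vartheta'\right)-f\left(X_{j\Delta_{n}},\vartheta'\right)\right|,
\]
and since the right-hand side no longer depends on $\vartheta$, taking the supremum over $\vartheta\in\Xi$ on the left preserves the inequality.

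Next I would take the $L^{p}\left(P_{\theta^{\star}}\right)$-norm of both sides and use Minkowski's inequality to pass the norm through the finite sum:
\[
\left\|\sup_{\vartheta\in\Xi}\left|\frac{1}{k_{n}}\sum_{j=1}^{k_{n}-2}\left(f\left(\lm{Y}{j},\vartheta\right)-f\left(X_{j\Delta_{n}},\vartheta\right)\right)\right|\right\|_{p}\le \frac{1}{k_{n}}\sum_{j=1}^{k_{n}-2}\left\|\sup_{\vartheta\in\Xi}\left|f\left(\lm{Y}{j},\vartheta\right)-f\left(X_{j\Delta_{n}},\vartheta\right)\right|\right\|_{p}.
\]
Then Lemma \ref{ApproxLM}, whose hypotheses are met because $f$ and $\partial_{x}f$ are of polynomial growth uniformly in $\vartheta$, bounds each summand uniformly in $j$ by $C\left(p\right)\Delta_{n}^{1/2}$. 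Hence the right-hand side is at most $\frac{k_{n}-2}{k_{n}}C\left(p\right)\Delta_{n}^{1/2}\le C\left(p\right)\Delta_{n}^{1/2}$, which is exactly the asserted bound.

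The estimate is essentially routine, and the only point deserving attention is that the bound supplied by Lemma \ref{ApproxLM} is uniform in $j$: this uniformity is precisely what lets the averaging factor $1/k_{n}$ absorb the $k_{n}-2$ summands without any deterioration of the rate. In particular, I do not expect to need any cancellation or martingale structure among the differences $f\left(\lm{Y}{j},\cdot\right)-f\left(X_{j\Delta_{n}},\cdot\right)$, since the target rate $\Delta_{n}^{1/2}$ already coincides with the pointwise rate of Lemma \ref{ApproxLM}; the crude triangle-inequality bound is already tight enough for the conclusion.
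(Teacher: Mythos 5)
Your proposal is correct and follows exactly the same route as the paper's own proof: pull the supremum inside the sum via the triangle inequality, apply Minkowski's inequality to move the $L^{p}$-norm through the finite sum, and invoke the uniform-in-$j$ bound of Lemma \ref{ApproxLM}. Nothing is missing; the observation that the averaging factor $1/k_{n}$ absorbs the $k_{n}-2$ terms is precisely the point of the paper's argument as well.
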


\begin{proof}
	By Lemma \ref{ApproxLM},
	\begin{align*}
	\left\|\sup_{\vartheta\in\Xi}
	\left|\frac{1}{k_{n}}\sum_{j=1}^{k_{n}-2}f\left(\lm{Y}{j},\vartheta\right)-\frac{1}{k_{n}}\sum_{j=1}^{k_{n}-2}f\left(X_{j\Delta_{n}},\vartheta\right)\right|
	\right\|_{p} &\le \left\|\frac{1}{k_{n}}\sum_{j=1}^{k_{n}-2}\sup_{\vartheta\in\Xi}
	\left|f\left(\lm{Y}{j},\vartheta\right)-f\left(X_{j\Delta_{n}},\vartheta\right)\right|
	\right\|_{p}\\
	&\le \frac{1}{k_{n}}\sum_{j=1}^{k_{n}-2}\left\|\sup_{\vartheta\in\Xi}
	\left|f\left(\lm{Y}{j},\vartheta\right)-f\left(X_{j\Delta_{n}},\vartheta\right)\right|
	\right\|_{p}\\
	&\le C\left(p\right)\Delta_{n}^{1/2}.
	\end{align*}
\end{proof}

\begin{lemma}\label{ApproxFunctional}
	Assume the components of the functions $f,g\in C^{2}\left(\R^{d};\ \R\right)$, $\partial_{x}f$, $\partial_{x}g$, $\partial_{x}^{2}f$ $\partial_{x}^{2}g$ are polynomial growth functions. Then we have
	\begin{align*}
	\left|\mathbf{E}\left[f\left(\lm{Y}{j}\right)g\left(X_{\left(j+1\right)\Delta_{n}}\right)-f\left(X_{j\Delta_{n}}\right)g\left(X_{j\Delta_{n}}\right)|\mathcal{H}_{j}^{n}\right]\right|\le C\Delta_{n}\left(1+\left|X_{j\Delta_{n}}\right|\right)^{C}.
	\end{align*}
\end{lemma}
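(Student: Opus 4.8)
The plan is to split the quantity by adding and subtracting the mixed term $f(X_{j\Delta_{n}})g(X_{(j+1)\Delta_{n}})$, writing
\[
f(\lm{Y}{j})g(X_{(j+1)\Delta_{n}})-f(X_{j\Delta_{n}})g(X_{j\Delta_{n}})=\bigl(f(\lm{Y}{j})-f(X_{j\Delta_{n}})\bigr)g(X_{(j+1)\Delta_{n}})+f(X_{j\Delta_{n}})\bigl(g(X_{(j+1)\Delta_{n}})-g(X_{j\Delta_{n}})\bigr).
\]
The second summand is the routine one. Since $f(X_{j\Delta_{n}})$ is $\mathcal{H}_{j}^{n}$-measurable and bounded by $C(1+|X_{j\Delta_{n}}|)^{C}$, I would pull it out and apply It\^o's formula to $g$ to obtain $\mathbf{E}[g(X_{(j+1)\Delta_{n}})-g(X_{j\Delta_{n}})|\mathcal{H}_{j}^{n}]=\mathbf{E}[\int_{j\Delta_{n}}^{(j+1)\Delta_{n}}L_{\theta^{\star}}g(X_{s})\,\mathrm{d}s|\mathcal{H}_{j}^{n}]$, where the stochastic integral drops out as a martingale increment since its integrand has polynomial growth and [A1]-(iii) holds; the polynomial growth of $L_{\theta^{\star}}g$ together with [A1]-(iii) then gives $C\Delta_{n}(1+|X_{j\Delta_{n}}|)^{C}$.

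For the first summand I would Taylor-expand $f(\lm{Y}{j})$ about $X_{j\Delta_{n}}$ to first order with a second-order remainder in $\partial_{x}^{2}f$. Decomposing $\lm{Y}{j}-X_{j\Delta_{n}}=(\lm{X}{j}-X_{j\Delta_{n}})+\Lambda_{\star}^{1/2}\lm{\varepsilon}{j}$, the remainder is controlled by $|\lm{Y}{j}-X_{j\Delta_{n}}|^{2}$, whose conditional second moment splits into a diffusion part of order $\Delta_{n}$ and an averaged-noise part of order $p_{n}^{-1}$; under [A6] one has $p_{n}^{-1}\le\Delta_{n}$ because $\tau\le2$, so the whole remainder is $O(\Delta_{n})$. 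A conditional H\"older inequality against the polynomially bounded factor $\partial_{x}^{2}f(\cdot)\,g(X_{(j+1)\Delta_{n}})$, absorbing the polynomial weights via [A1]-(iii) and the uniform moment estimates for local means, then yields $C\Delta_{n}(1+|X_{j\Delta_{n}}|)^{C}$ for this contribution.

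The main obstacle is the first-order term $\partial_{x}f(X_{j\Delta_{n}})[\lm{Y}{j}-X_{j\Delta_{n}}]g(X_{(j+1)\Delta_{n}})$, for which a crude estimate gives only $O(\Delta_{n}^{1/2})$ and cancellation is essential. The averaged-noise contribution vanishes in conditional expectation, because $\lm{\varepsilon}{j}$ is built only from the noise variables of block $j$, which are independent of $\mathcal{H}_{j}^{n}$ and of the whole driving Wiener process, so conditioning further on the diffusion path leaves a centred factor. In the diffusion contribution (where conditioning on $\mathcal{H}_{j}^{n}$ reduces to conditioning on $\mathcal{G}_{j}^{n}$ by independence of the noise) the drift part of $\lm{X}{j}-X_{j\Delta_{n}}$ is already of order $\Delta_{n}$, while its leading martingale part equals $a(X_{j\Delta_{n}})\zeta_{j+1,n}'$ up to a coefficient-variation error of conditional $L^{2}$-norm $O(\Delta_{n})$, with $\zeta_{j+1,n}'$ the weighted Wiener integral of Lemma~\ref{EvalZeta}. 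Since $\mathbf{E}[\zeta_{j+1,n}'|\mathcal{G}_{j}^{n}]=\mathbf{0}$, expanding $g(X_{(j+1)\Delta_{n}})$ about $g(X_{j\Delta_{n}})$ annihilates the leading contribution, and what survives is governed by the conditional covariance of $\zeta_{j+1,n}'$ with the increment $X_{(j+1)\Delta_{n}}-X_{j\Delta_{n}}$, which a computation in the spirit of Lemma~\ref{EvalZeta} shows to be of order $\Delta_{n}$. Collecting the three contributions gives the claimed bound $C\Delta_{n}(1+|X_{j\Delta_{n}}|)^{C}$.
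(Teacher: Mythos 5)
Your proof is correct and takes essentially the same route as the paper's: a Taylor expansion of $f$ about $X_{j\Delta_{n}}$, an It\^o/drift argument for $f\left(X_{j\Delta_{n}}\right)\left(g\left(X_{\left(j+1\right)\Delta_{n}}\right)-g\left(X_{j\Delta_{n}}\right)\right)$, a conditional second-moment bound (using $p_{n}^{-1}\le\Delta_{n}$ for $\tau\le2$) for the quadratic remainder, and, for the dangerous first-order term, the cancellation $\mathbf{E}_{\theta^{\star}}\left[\lm{Y}{j}-X_{j\Delta_{n}}|\mathcal{H}_{j}^{n}\right]=O\left(\Delta_{n}\right)$ combined with a covariance/Cauchy--Schwarz bound on the cross term with the increment of $X$. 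The only difference is that where the paper imports this conditional-mean estimate from Proposition 3.2 of Favetto (2014), you derive it in place from the centring of the block noise and the martingale property of $\zeta_{j+1,n}'$ in Lemma \ref{EvalZeta}, which is an unpacking of the cited ingredient rather than a different argument.
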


\begin{proof}
	For Taylor's expansion, we have
	\begin{align*}
	&f\left(\lm{Y}{j}\right)g\left(X_{\left(j+1\right)\Delta_{n}}\right)\\
	&=f\left(X_{j\Delta_{n}}\right)g\left(X_{\left(j+1\right)\Delta_{n}}\right)+\partial_{x}f\left(X_{j\Delta_{n}}\right)\left[\lm{Y}{j}-X_{j\Delta_{n}}\right]g\left(X_{\left(j+1\right)\Delta_{n}}\right)\\
	&\quad+\int_{0}^{1}\left(1-s\right)\partial_{x}^{2}f\left(X_{j\Delta_{n}}+s\left(\lm{Y}{j}-X_{j\Delta_{n}}\right)\mathrm{d}s\right)\left[\left(\lm{Y}{j}-X_{j\Delta_{n}}\right)^{\otimes2}\right]g\left(X_{\left(j+1\right)\Delta_{n}}\right),
	\end{align*}
	and Ito-Taylor expansion and Proposition 3.2 in \citet{Favetto-2014} verify
	\begin{align*}
	&\left|\mathbf{E}_{\theta^{\star}}\left[f\left(X_{j\Delta_{n}}\right)g\left(X_{\left(j+1\right)\Delta_{n}}\right)-f\left(X_{j\Delta_{n}}\right)g\left(X_{j\Delta_{n}}\right)|\mathcal{H}_{j}^{n}\right]\right|\le C\Delta_{n}\left(1+\left|X_{j\Delta_{n}}\right|\right)^{C},\\
	&	\left|\mathbf{E}_{\theta^{\star}}\left[\int_{0}^{1}\left(1-s\right)\partial_{x}^{2}f\left(X_{j\Delta_{n}}+s\left(\lm{Y}{j}-X_{j\Delta_{n}}\right)\mathrm{d}s\right)\left[\left(\lm{Y}{j}-X_{j\Delta_{n}}\right)^{\otimes2}\right]g\left(X_{\left(j+1\right)\Delta_{n}}\right)|\mathcal{H}_{j}^{n}\right]\right|\\
	&\qquad\le \mathbf{E}_{\theta^{\star}}\left[C\left(1+\sup_{t\in\left[j\Delta_{n},\left(j+1\right)\Delta_{n}\right]}\left|X_{t}\right|+\left|\lm{\varepsilon}{j}\right|\right)^{C}|\mathcal{H}_{j}^{n}\right]
	\mathbf{E}_{\theta^{\star}}\left[\left|\lm{Y}{j}-X_{j\Delta_{n}}\right|^{2}|\mathcal{H}_{j}^{n}\right]\\
	&\qquad\le C\Delta_{n}\left(1+\left|X_{j\Delta_{n}}\right|\right)^{C}.
	\end{align*}
	It holds that
	\begin{align*}
	&\partial_{x}f\left(X_{j\Delta_{n}}\right)\left[\lm{Y}{j}-X_{j\Delta_{n}}\right]g\left(X_{\left(j+1\right)\Delta_{n}}\right)\\
	&=\partial_{x}f\left(X_{j\Delta_{n}}\right)\left[\lm{Y}{j}-X_{j\Delta_{n}}\right]g\left(X_{j\Delta_{n}}\right)
	\\
	&\quad+\partial_{x}f\left(X_{j\Delta_{n}}\right)\left[\lm{Y}{j}-X_{j\Delta_{n}}\right]\int_{0}^{1}\partial_{x}g\left(X_{j\Delta_{n}}+s\left(X_{\left(j+1\right)\Delta_{n}}-X_{j\Delta_{n}}\right)\right)\mathrm{d}s\left[X_{\left(j+1\right)\Delta_{n}}-X_{j\Delta_{n}}\right]
	\end{align*}
	and Proposition 3.2 in \citet{Favetto-2014} leads to
	\begin{align*}
	&\left|\mathbf{E}_{\theta^{\star}}\left[\partial_{x}f\left(X_{j\Delta_{n}}\right)\left[\lm{Y}{j}-X_{j\Delta_{n}}\right]g\left(X_{j\Delta_{n}}\right)|\mathcal{H}_{j}^{n}\right]\right|\le C\Delta_{n}\left(1+\left|X_{j\Delta_{n}}\right|\right)^{C},\\
	&\left|\mathbf{E}_{\theta^{\star}}\left[\partial_{x}f\left(X_{j\Delta_{n}}\right)\left[\lm{Y}{j}-X_{j\Delta_{n}}\right]\right.\right.\\
	&\qquad\times\left.\left.\int_{0}^{1}\partial_{x}g\left(X_{j\Delta_{n}}+s\left(X_{\left(j+1\right)\Delta_{n}}-X_{j\Delta_{n}}\right)\right)\mathrm{d}s\left[X_{\left(j+1\right)\Delta_{n}}-X_{j\Delta_{n}}\right]|\mathcal{H}_{j}^{n}\right]\right|\\
	&\quad\le C\left(1+\left|X_{j\Delta_{n}}\right|\right)^{C}
	\mathbf{E}_{\theta^{\star}}\left[\left|\lm{Y}{j}-X_{j\Delta_{n}}\right|^{2}|\mathcal{H}_{j}^{n}\right]^{1/2}	\mathbf{E}_{\theta^{\star}}\left[\left|X_{\left(j+1\right)\Delta_{n}}-X_{j\Delta_{n}}\right|^{2}|\mathcal{H}_{j}^{n}\right]^{1/2}\\
	&\quad\le C\Delta_{n}\left(1+\left|X_{j\Delta_{n}}\right|\right)^{C}.
	\end{align*}
	Hence we obtain the result.
\end{proof}

\begin{lemma}\label{ExpansionLM}
	\begin{enumerate}
		\item[{(i)}] The next expansion holds:
		\begin{align*}
		\lm{Y}{j+1}-\lm{Y}{j}= \Delta_{n}b\left(X_{j\Delta_{n}}\right)+ a\left(X_{j\Delta_{n}}\right)\left(\zeta_{j+1,n}+\zeta_{j+2,n}'\right)+e_{j,n}
		+\left(\Lambda_{\star}\right)^{1/2}\left(\lm{\varepsilon}{j+1}-\lm{\varepsilon}{j}\right)
		\end{align*}
		where $e_{j,n}$ is a $\mathcal{H}_{j+2}^{n}$-measurable random variable such that $\left\|e_{j,n}\right\|_{p}\le C\left(p\right)\Delta_{n}$, for $j=1,\ldots,k_{n}-2$, $n\in\mathbf{N}$ and $p\ge 1$.
		\item[(ii)] For any $p\ge 1$ and $\mathcal{H}_{j}^{n}$-measurable $\R^{d}\otimes \R^{r}$-valued random variable $\mathbb{B}_{j}^{n}$ such that $\sup_{j}\mathbf{E}\left[\left\|\mathbb{B}_{j}^{n}\right\|^{m}\right]<\infty$ for all $m\in\mathbf{N}$, we have the next $L^{p}$-boundedness:
		\begin{align*}
		\mathbf{E}_{\theta^{\star}}\left[\left|\sum_{j=1}^{k_{n}-2}\mathbb{B}_{j}^{n}\left[e_{j,n}\left(\zeta_{j+1,n}+\zeta_{j+2,n}'\right)^{T}\right]\right|^{p}\right]^{1/p}\le C\left(p\right)k_{n}\Delta_{n}^{2}.
		\end{align*}
		\item[(iii)] For any $p\ge 1$ and $\mathcal{H}_{j}^{n}$-measurable $\R^{d}$-valued random variable $\mathbb{C}_{j}^{n}$ such that $\sup_{j}\mathbf{E}\left[\left|\mathbb{C}_{j}^{n}\right|^{m}\right]<\infty$ for all $m\in\mathbf{N}$, we have the next $L^{p}$-boundedness:
		\begin{align*}
		\mathbf{E}_{\theta^{\star}}\left[\left|\sum_{j=1}^{k_{n}-2}\mathbb{C}_{j}^{n}\left[e_{j,n}\right]\right|^{p}\right]^{1/p}\le C\left(p\right)k_{n}\Delta_{n}^{3/2}.
		\end{align*}
	\end{enumerate}
\end{lemma}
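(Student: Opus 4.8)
I would organize the argument as a concrete expansion for (i) followed by a common conditional-mean/martingale template for (ii) and (iii).

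\emph{Part (i).} Since $Y_{ih_{n}}=X_{ih_{n}}+\Lambda_{\star}^{1/2}\varepsilon_{ih_{n}}$, averaging gives $\lm{Y}{j}=\lm{X}{j}+\Lambda_{\star}^{1/2}\lm{\varepsilon}{j}$, so the noise term $\Lambda_{\star}^{1/2}(\lm{\varepsilon}{j+1}-\lm{\varepsilon}{j})$ splits off immediately and it remains to expand $\lm{X}{j+1}-\lm{X}{j}$. Writing each $X_{(j+1)\Delta_{n}+ih_{n}}$ and $X_{j\Delta_{n}+ih_{n}}$ as $X_{j\Delta_{n}}$ plus its drift and stochastic integrals from $j\Delta_{n}$, the constant parts cancel and the telescoped integrals rearrange to
\begin{align*}
\lm{X}{j+1}-\lm{X}{j}=\frac{1}{p_{n}}\sum_{i=0}^{p_{n}-1}\left(\int_{j\Delta_{n}+ih_{n}}^{(j+1)\Delta_{n}+ih_{n}}b(X_{s})\,\mathrm{d}s+\int_{j\Delta_{n}+ih_{n}}^{(j+1)\Delta_{n}+ih_{n}}a(X_{s})\,\mathrm{d}w_{s}\right).
\end{align*}
Freezing the coefficients at $X_{j\Delta_{n}}$ produces the drift term $\Delta_{n}b(X_{j\Delta_{n}})$ and, because $\frac{1}{p_{n}}\sum_{i}\int_{j\Delta_{n}+ih_{n}}^{(j+1)\Delta_{n}+ih_{n}}\mathrm{d}w_{s}=\zeta_{j+1,n}+\zeta_{j+2,n}'$ by the decomposition in Lemma \ref{EvalZeta}, the diffusion term $a(X_{j\Delta_{n}})(\zeta_{j+1,n}+\zeta_{j+2,n}')$. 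The remainder is $e_{j,n}=D_{j,n}+E_{j,n}$, obtained by replacing $b(X_{s}),a(X_{s})$ with $b(X_{s})-b(X_{j\Delta_{n}}),a(X_{s})-a(X_{j\Delta_{n}})$. Using [A1]-(ii) and the increment bound $\|X_{s}-X_{j\Delta_{n}}\|_{p}\le C(p)\Delta_{n}^{1/2}$ for $s\le(j+2)\Delta_{n}$, I get $\|D_{j,n}\|_{p}\le C(p)\Delta_{n}^{3/2}$; for $E_{j,n}$ I rewrite the overlapping sum as a single stochastic integral $\int_{j\Delta_{n}}^{(j+2)\Delta_{n}}c(s)(a(X_{s})-a(X_{j\Delta_{n}}))\,\mathrm{d}w_{s}$ with a deterministic weight $0\le c(s)\le1$ counting overlaps, and Burkholder--Davis--Gundy together with the same increment bound gives $\|E_{j,n}\|_{p}\le C(p)\Delta_{n}$. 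Hence $\|e_{j,n}\|_{p}\le C(p)\Delta_{n}$, and $e_{j,n}$ is $\mathcal{G}_{j+2}^{n}$-measurable (a fortiori $\mathcal{H}_{j+2}^{n}$-measurable).

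\emph{Template for (ii) and (iii).} The naive triangle-inequality bounds ($k_{n}\Delta_{n}^{3/2}$ and $k_{n}\Delta_{n}$) are too large, so the extra factor $\Delta_{n}^{1/2}$ must be extracted from the martingale structure of $E_{j,n}$. The plan is to split each summand into its $\mathcal{H}_{j}^{n}$-conditional mean plus the fluctuation, bound the conditional-mean part term by term, and control the fluctuation by a martingale argument. Two facts drive the estimates. First, since $E_{j,n}$ is a stochastic integral over $[j\Delta_{n},(j+2)\Delta_{n})$ with $(\mathcal{G}_{t})$-adapted integrand and the noise is independent of $\mathcal{G}$, one has $\mathbf{E}_{\theta^{\star}}[E_{j,n}\mid\mathcal{H}_{j}^{n}]=\mathbf{0}$. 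Second, by the It\^o--Taylor expansion and the growth conditions [A2] (as in the proof of Lemma \ref{ApproxFunctional}, via Proposition 3.2 of \citet{Favetto-2014}), the $\Delta_{n}^{1/2}$-sized martingale part of $a(X_{s})-a(X_{j\Delta_{n}})$ vanishes under $\mathbf{E}_{\theta^{\star}}[\,\cdot\mid\mathcal{H}_{j}^{n}]$, leaving $|\mathbf{E}_{\theta^{\star}}[a(X_{s})-a(X_{j\Delta_{n}})\mid\mathcal{H}_{j}^{n}]|\le C(s-j\Delta_{n})(1+|X_{j\Delta_{n}}|)^{C}$, and similarly for $b$.

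\emph{Part (ii).} I write $\xi_{j}:=\mathbb{B}_{j}^{n}[e_{j,n}(\zeta_{j+1,n}+\zeta_{j+2,n}')^{T}]$ and treat the $D_{j,n}$- and $E_{j,n}$-contributions separately. The $D_{j,n}$-contribution is bounded directly by generalised H\"older together with $\|D_{j,n}\|_{2p}\le C\Delta_{n}^{3/2}$ and $\|\zeta_{j+1,n}+\zeta_{j+2,n}'\|_{2p}\le C\Delta_{n}^{1/2}$, giving $O(\Delta_{n}^{2})$ per term and $O(k_{n}\Delta_{n}^{2})$ after summation. For $\eta_{j}:=\mathbb{B}_{j}^{n}[E_{j,n}(\zeta_{j+1,n}+\zeta_{j+2,n}')^{T}]$, the conditional It\^o isometry (both factors being integrals against $\mathrm{d}w_{s}$ with the same weight $c(s)$) yields $\mathbf{E}_{\theta^{\star}}[\eta_{j}\mid\mathcal{H}_{j}^{n}]=\mathbb{B}_{j}^{n}[\int c(s)^{2}\mathbf{E}_{\theta^{\star}}[a(X_{s})-a(X_{j\Delta_{n}})\mid\mathcal{H}_{j}^{n}]\,\mathrm{d}s]$, which the second fact bounds by $O(\Delta_{n}^{2})$ in $L^{p}$, so its sum is $O(k_{n}\Delta_{n}^{2})$. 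The fluctuations $\tilde{\eta}_{j}:=\eta_{j}-\mathbf{E}_{\theta^{\star}}[\eta_{j}\mid\mathcal{H}_{j}^{n}]$ satisfy $\|\tilde{\eta}_{j}\|_{p}\le C\Delta_{n}^{3/2}$; since each $\tilde{\eta}_{j}$ is $\mathcal{H}_{j+2}^{n}$-measurable and centred given $\mathcal{H}_{j}^{n}$, it is not adapted enough to be a martingale difference, so I split the sum by the parity of $j$, within each subsequence the summands do form a martingale difference sequence, and Burkholder--Davis--Gundy with Minkowski gives $\|\sum\tilde{\eta}_{j}\|_{p}\le Ck_{n}^{1/2}\Delta_{n}^{3/2}=Ck_{n}\Delta_{n}^{2}T_{n}^{-1/2}\le Ck_{n}\Delta_{n}^{2}$ as $T_{n}\to\infty$.

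\emph{Part (iii) and the main obstacle.} This is analogous but simpler: by the first fact $\mathbf{E}_{\theta^{\star}}[E_{j,n}\mid\mathcal{H}_{j}^{n}]=\mathbf{0}$, so the conditional mean of $\mathbb{C}_{j}^{n}[e_{j,n}]$ reduces to $\mathbb{C}_{j}^{n}[\mathbf{E}_{\theta^{\star}}[D_{j,n}\mid\mathcal{H}_{j}^{n}]]=O(\Delta_{n}^{2})$ per term, contributing $O(k_{n}\Delta_{n}^{2})$; the fluctuation of $\mathbb{C}_{j}^{n}[e_{j,n}]$ is of $L^{p}$-size $O(\Delta_{n})$ (dominated by $E_{j,n}$), and the same parity-split martingale argument gives $\|\sum(\mathbb{C}_{j}^{n}[e_{j,n}]-\mathbf{E}_{\theta^{\star}}[\,\cdot\mid\mathcal{H}_{j}^{n}])\|_{p}\le Ck_{n}^{1/2}\Delta_{n}\le Ck_{n}\Delta_{n}^{3/2}$, again since $T_{n}\to\infty$. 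The main obstacle throughout is the non-adaptedness of the summands: each $e_{j,n}$ reaches into blocks $j{+}1,j{+}2$, so the gain of $\Delta_{n}^{1/2}$ cannot be read off from a single martingale but needs both the exact cancellation $\mathbf{E}_{\theta^{\star}}[E_{j,n}\mid\mathcal{H}_{j}^{n}]=\mathbf{0}$ and the decoupling of the sum into parity subsequences. The quantitative crux is the $O(\Delta_{n})$, rather than $O(\Delta_{n}^{1/2})$, bound on the conditional mean of the coefficient increments, which is precisely what upgrades the conditional-mean terms from order $\Delta_{n}^{3/2}$ to $\Delta_{n}^{2}$.
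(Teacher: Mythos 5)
Your proposal is correct, and while it rests on the same two pillars as the paper's proof, the execution differs enough to be worth comparing. Part (i) is the paper's argument in different clothing: your overlap-counting weight $c(s)$ is exactly the paper's weighted telescoping (weight $(i+1)/p_{n}$ on $[j\Delta_{n}+ih_{n},\,j\Delta_{n}+(i+1)h_{n})$ and $(p_{n}-i-1)/p_{n}$ on the fine intervals of the next block); the only real difference is that the paper further splits the remainder into six pieces $r_{j,n}^{(1)},r_{j,n}^{(2)},r_{j,n}^{(3)},s_{j,n}^{(1)},s_{j,n}^{(2)},s_{j,n}^{(3)}$, according to whether the coefficient is frozen at the block start or at the fine-interval start, while you keep only a drift part $D_{j,n}$ and a diffusion part $E_{j,n}$. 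This choice propagates to (ii) and (iii): in the paper, every piece except $r^{(1)},s^{(1)}$ is $O(\Delta_{n}^{3/2})$ and is disposed of by H\"older/orthogonality, and for the critical sum $\sum_{j}\mathbb{B}_{j}^{n}[r_{j,n}^{(1)}\zeta_{j+1,n}^{T}]$ the paper compensates the fine-scale quadratic $(\int \mathrm{d}w)^{\otimes 2}$ by $h_{n}I_{r}$ to obtain a martingale with respect to the fine filtration $\mathcal{H}_{j,i}^{n}$, then splits the compensator into a block-level martingale plus a conditional-mean term controlled by the It\^o--Taylor bound $\left\|\mathbf{E}_{\theta^{\star}}\left[a\left(X_{j\Delta_{n}+ih_{n}}\right)\mid\mathcal{H}_{j}^{n}\right]-a\left(X_{j\Delta_{n}}\right)\right\|\le C\Delta_{n}\left(1+\left|X_{j\Delta_{n}}\right|\right)^{C}$. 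You instead compute $\mathbf{E}_{\theta^{\star}}[E_{j,n}(\zeta_{j+1,n}+\zeta_{j+2,n}')^{T}\mid\mathcal{H}_{j}^{n}]$ in closed form by the It\^o isometry --- legitimate precisely because both integrals carry the same weight $c$ --- apply the same It\^o--Taylor bound to get $O(\Delta_{n}^{2})$ per block, and control the centred fluctuation by Burkholder after a parity split, which you need because $E_{j,n}$ straddles two blocks (the paper's $r_{j,n}^{(1)}$ and $\zeta_{j+1,n}$ live inside a single block, so no split is needed there, though the paper uses the analogous three-fold split elsewhere). Your closing inequalities $k_{n}^{1/2}\Delta_{n}^{3/2}\le Ck_{n}\Delta_{n}^{2}$ and $k_{n}^{1/2}\Delta_{n}\le Ck_{n}\Delta_{n}^{3/2}$ invoke $T_{n}=k_{n}\Delta_{n}\to\infty$ exactly as the paper does in (iii). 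What your route buys is economy and a cleaner treatment of the fine scale: the cross-variation identity handles diagonal and off-diagonal products of the fine Wiener increments in one stroke, a point which the paper's identification of $\sum_{j}\mathbb{B}_{j}^{n}[r_{j,n}^{(1)}\zeta_{j+1,n}^{T}]$ with the purely diagonal sum $\mathbb{D}^{n}$ treats rather loosely; what the paper's route buys is that the only terms requiring a refined argument are isolated explicitly, with everything else killed by crude term-by-term bounds.
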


\begin{proof}
	Firstly we prove (i). Without loss of generality, assume $p$ is an even number. It holds
	\begin{align*}
	\lm{Y}{j+1}-\lm{Y}{j}=\lm{X}{j+1}-\lm{X}{j}+\left(\Lambda_{\star}\right)^{1/2}\left(\lm{\varepsilon}{j+1}-\lm{\varepsilon}{j}\right),
	\end{align*}
	and
	\begin{align*}
	&\lm{X}{j+1}-\lm{X}{j}\\
	&=\frac{1}{p_{n}}\sum_{i=0}^{p_{n}-1}\left(X_{\left(j+1\right)\Delta_{n}+ih_{n}}-X_{j\Delta_{n}+ih_{n}}\right)\\
	&=\frac{1}{p_{n}}\sum_{i=0}^{p_{n}-1}\left(X_{\left(j+1\right)\Delta_{n}+ih_{n}}-X_{\left(j+1\right)\Delta_{n}+\left(i-1\right)h_{n}}+X_{\left(j+1\right)\Delta_{n}+\left(i-1\right)h_{n}}-\cdots-X_{j\Delta_{n}+ih_{n}}\right)\\
	&=\frac{1}{p_{n}}\sum_{i=0}^{p_{n}-1}\left(\int_{j\Delta_{n}+\left(p_{n}+i-1\right)h_{n}}^{j\Delta_{n}+\left(p_{n}+i\right)h_{n}}\mathrm{d}X_{s}+\cdots +\int_{j\Delta_{n}+ih_{n}}^{j\Delta_{n}+\left(i+1\right)h_{n}}\mathrm{d}X_{s}\right)\\
	&=\frac{1}{p_{n}}\sum_{i=0}^{p_{n}-1}
	\sum_{l=0}^{p_{n}-1}\int_{j\Delta_{n}+\left(i+l\right)h_{n}}^{j\Delta_{n}+\left(i+l+1\right)h_{n}}\mathrm{d}X_{s}\\
	&=\frac{1}{p_{n}}\sum_{i=0}^{p_{n}-1}
	\left(i+1\right)\int_{j\Delta_{n}+ih_{n}}^{j\Delta_{n}+\left(i+1\right)h_{n}}\mathrm{d}X_{s}
	+\frac{1}{p_{n}}\sum_{i=0}^{p_{n}-1}
	\left(p_{n}-i-1\right)\int_{\left(j+1\right)\Delta_{n}+ih_{n}}^{\left(j+1\right)\Delta_{n}+\left(i+1\right)h_{n}}\mathrm{d}X_{s}\\
	&=\frac{1}{p_{n}}\sum_{i=0}^{p_{n}-1}
	\left(i+1\right)\int_{j\Delta_{n}+ih_{n}}^{j\Delta_{n}+\left(i+1\right)h_{n}}\mathrm{d}X_{s}
	+\frac{1}{p_{n}}\sum_{i=0}^{p_{n}-1}
	\left(p_{n}-i-1\right)\int_{\left(j+1\right)\Delta_{n}+ih_{n}}^{\left(j+1\right)\Delta_{n}+\left(i+1\right)h_{n}}\mathrm{d}X_{s}\\
	&\qquad+\Delta_{n}b\left(X_{j\Delta_{n}}\right)
	-\frac{1}{p_{n}}\sum_{i=0}^{p_{n}-1}\left(\left(i+1\right)h_{n}+\left(p_{n}-i-1\right)h_{n}\right)b\left(X_{j\Delta_{n}}\right)\\
	&=\Delta_{n}b\left(X_{j\Delta_{n}}\right)+ a\left(X_{j\Delta_{n}}\right)\left(\zeta_{j+1,n}+\zeta_{j+2,n}'\right)+e_{j,n},
	\end{align*}
	where $e_{j,n}=\sum_{l=1}^{3}\left(r_{j,n}^{\left(l\right)}+s_{j,n}^{\left(l\right)}\right)$,
	\begin{align*}
	r_{j,n}^{\left(1\right)}&=\frac{1}{p_{n}}\sum_{i=0}^{p_{n}-1}\left(i+1\right)\int_{j\Delta_{n}+ih_{n}}^{j\Delta_{n}+\left(i+1\right)h_{n}}\left(a\left(X_{j\Delta_{n}+ih_{n}}\right)-a\left(X_{j\Delta_{n}}\right)\right)\mathrm{d}w_{s},\\
	r_{j,n}^{\left(2\right)}&=\frac{1}{p_{n}}\sum_{i=0}^{p_{n}-1}\left(i+1\right)\int_{j\Delta_{n}+ih_{n}}^{j\Delta_{n}+\left(i+1\right)h_{n}}\left(a\left(X_{s}\right)-a\left(X_{j\Delta_{n}+ih_{n}}\right)\right)\mathrm{d}w_{s},\\
	r_{j,n}^{\left(3\right)}&=\frac{1}{p_{n}}\sum_{i=0}^{p_{n}-1}\left(i+1\right)\int_{j\Delta_{n}+ih_{n}}^{j\Delta_{n}+\left(i+1\right)h_{n}}\left(b\left(X_{s}\right)-b\left(X_{j\Delta_{n}}\right)\right)\mathrm{d}s,\\
	s_{j,n}^{\left(1\right)}&=\frac{1}{p_{n}}\sum_{i=0}^{p_{n}-1}\left(p_{n}-i-1\right)\int_{\left(j+1\right)\Delta_{n}+ih_{n}}^{\left(j+1\right)\Delta_{n}+\left(i+1\right)h_{n}}\left(a\left(X_{\left(j+1\right)\Delta_{n}+ih_{n}}\right)-a\left(X_{j\Delta_{n}}\right)\right)\mathrm{d}w_{s},\\
	s_{j,n}^{\left(2\right)}&=\frac{1}{p_{n}}\sum_{i=0}^{p_{n}-1}\left(p_{n}-i-1\right)\int_{\left(j+1\right)\Delta_{n}+ih_{n}}^{\left(j+1\right)\Delta_{n}+\left(i+1\right)h_{n}}\left(a\left(X_{s}\right)-a\left(X_{\left(j+1\right)\Delta_{n}+ih_{n}}\right)\right)\mathrm{d}w_{s},\\
	s_{j,n}^{\left(3\right)}&=\frac{1}{p_{n}}\sum_{i=0}^{p_{n}-1}\left(p_{n}-i-1\right)\int_{\left(j+1\right)\Delta_{n}+ih_{n}}^{\left(j+1\right)\Delta_{n}+\left(i+1\right)h_{n}}\left(b\left(X_{s}\right)-b\left(X_{j\Delta_{n}}\right)\right)\mathrm{d}s,
	\end{align*}
	using Lemma \ref{EvalZeta}. 
	By BDG inequality, H\"{o}lder's inequality, and triangular inequality for $L^{p/2}$-norm, we have
	\begin{align*}
	\left\|r_{j,n}^{\left(1\right)}\right\|_{p}
	&=\mathbf{E}_{\theta^{\star}}\left[\left|\frac{1}{p_{n}}\sum_{i=0}^{p_{n}-1}\left(i+1\right)\int_{j\Delta_{n}+ih_{n}}^{j\Delta_{n}+\left(i+1\right)h_{n}}\left(a\left(X_{j\Delta_{n}+ih_{n}}\right)-a\left(X_{j\Delta_{n}}\right)\right)\mathrm{d}w_{s}\right|^{p}\right]^{1/p}\\
	&\le C\left(p\right)\mathbf{E}_{\theta^{\star}}\left[\left|\frac{1}{p_{n}^{2}}\sum_{i=0}^{p_{n}-1}\left(i+1\right)^{2}\int_{j\Delta_{n}+ih_{n}}^{j\Delta_{n}+\left(i+1\right)h_{n}}\left\|a\left(X_{j\Delta_{n}+ih_{n}}\right)-a\left(X_{j\Delta_{n}}\right)\right\|^{2}\mathrm{d}s\right|^{p/2}\right]^{1/p}\\
	&\le C\left(p\right)\mathbf{E}_{\theta^{\star}}\left[\left|\sum_{i=0}^{p_{n}-1}\int_{j\Delta_{n}+ih_{n}}^{j\Delta_{n}+\left(i+1\right)h_{n}}\left\|a\left(X_{j\Delta_{n}+ih_{n}}\right)-a\left(X_{j\Delta_{n}}\right)\right\|^{2}\mathrm{d}s\right|^{p/2}\right]^{1/p}\\
	&= C\left(p\right)\left(\mathbf{E}_{\theta^{\star}}\left[\left|\sum_{i=0}^{p_{n}-1}\int_{j\Delta_{n}+ih_{n}}^{j\Delta_{n}+\left(i+1\right)h_{n}}\left\|a\left(X_{j\Delta_{n}+ih_{n}}\right)-a\left(X_{j\Delta_{n}}\right)\right\|^{2}\mathrm{d}s\right|^{p/2}\right]^{2/p}\right)^{1/2}\\
	&\le  C\left(p\right)\left(\sum_{i=0}^{p_{n}-1}\mathbf{E}_{\theta^{\star}}\left[\left|\int_{j\Delta_{n}+ih_{n}}^{j\Delta_{n}+\left(i+1\right)h_{n}}\left\|a\left(X_{j\Delta_{n}+ih_{n}}\right)-a\left(X_{j\Delta_{n}}\right)\right\|^{2}\mathrm{d}s\right|^{p/2}\right]^{2/p}\right)^{1/2}\\
	&\le C\left(p\right)\left(\sum_{i=0}^{p_{n}-1}h_{n}^{1-2/p}\mathbf{E}_{\theta^{\star}}\left[\int_{j\Delta_{n}+ih_{n}}^{j\Delta_{n}+\left(i+1\right)h_{n}}\left\|a\left(X_{j\Delta_{n}+ih_{n}}\right)-a\left(X_{j\Delta_{n}}\right)\right\|^{p}\mathrm{d}s\right]^{2/p}\right)^{1/2}\\
	&= C\left(p\right)\left(\sum_{i=0}^{p_{n}-1}h_{n}\sup_{s\in\left[j\Delta_{n},\left(j+1\right)\Delta_{n}\right]}\mathbf{E}_{\theta^{\star}}\left[\left\|a\left(X_{s}\right)-a\left(X_{j\Delta_{n}}\right)\right\|^{p}\right]^{2/p}\right)^{1/2}\\
	&\le C\left(p\right)\left(\sum_{i=0}^{p_{n}-1}h_{n}\left(C\left(p\right)\Delta_{n}^{p/2}\mathbf{E}_{\theta^{\star}}\left[\left(1+\left|X_{j\Delta_{n}}\right|\right)^{C\left(p\right)}\right]\right)^{2/p}\right)^{1/2}\\
	&\le C\left(p\right)\left(C\left(p\right)\Delta_{n}^{2}\right)^{1/2}\\
	&\le C\left(p\right)\Delta_{n}
	\end{align*}
	and we also have $\left\|s_{j,n}^{\left(1\right)}\right\|_{p}\le C\left(p\right)\Delta_{n}$ which can be obtained in the analogous manner. For $r_{j,n}^{\left(2\right)}$, we obtain
	\begin{align*}
	\left\|r_{j,n}^{\left(2\right)}\right\|_{p}
	&=\mathbf{E}_{\theta^{\star}}\left[\left|\frac{1}{p_{n}}\sum_{i=0}^{p_{n}-1}\left(i+1\right)\int_{j\Delta_{n}+ih_{n}}^{j\Delta_{n}+\left(i+1\right)h_{n}}\left(a\left(X_{s}\right)-a\left(X_{j\Delta_{n}+ih_{n}}\right)\right)\mathrm{d}w_{s}\right|^{p}\right]^{1/p}\\
	&\le C\left(p\right)\mathbf{E}_{\theta^{\star}}\left[\left|\frac{1}{p_{n}^{2}}\sum_{i=0}^{p_{n}-1}\left(i+1\right)^{2}\int_{j\Delta_{n}+ih_{n}}^{j\Delta_{n}+\left(i+1\right)h_{n}}\left\|a\left(X_{s}\right)-a\left(X_{j\Delta_{n}+ih_{n}}\right)\right\|^{2}\mathrm{d}s\right|^{p/2}\right]^{1/p}\\
	&\le C\left(p\right)\left(\sum_{i=0}^{p_{n}-1}\mathbf{E}_{\theta^{\star}}\left[\left|\int_{j\Delta_{n}+ih_{n}}^{j\Delta_{n}+\left(i+1\right)h_{n}}\left\|a\left(X_{s}\right)-a\left(X_{j\Delta_{n}+ih_{n}}\right)\right\|^{2}\mathrm{d}s\right|^{p/2}\right]^{2/p}\right)^{1/2}\\
	&\le C\left(p\right)\left(\sum_{i=0}^{p_{n}-1}h_{n}^{1-2/p}\mathbf{E}_{\theta^{\star}}\left[\int_{j\Delta_{n}+ih_{n}}^{j\Delta_{n}+\left(i+1\right)h_{n}}\left\|a\left(X_{s}\right)-a\left(X_{j\Delta_{n}+ih_{n}}\right)\right\|^{p}\mathrm{d}s\right]^{2/p}\right)^{1/2}\\
	&\le C\left(p\right)\left(\sum_{i=0}^{p_{n}-1}h_{n}^{1-2/p}\left(\int_{j\Delta_{n}+ih_{n}}^{j\Delta_{n}+\left(i+1\right)h_{n}}\mathbf{E}_{\theta^{\star}}\left[\left\|a\left(X_{s}\right)-a\left(X_{j\Delta_{n}+ih_{n}}\right)\right\|^{p}\right]\mathrm{d}s\right)^{2/p}\right)^{1/2}\\
	&\le C\left(p\right)\left(\sum_{i=0}^{p_{n}-1}h_{n}\left(\sup_{s\in\left[j\Delta_{n}+ih_{n},j\Delta_{n}+\left(i+1\right)h_{n}\right]}\mathbf{E}_{\theta^{\star}}\left[\left\|a\left(X_{s}\right)-a\left(X_{j\Delta_{n}+ih_{n}}\right)\right\|^{p}\right]\mathrm{d}s\right)^{2/p}\right)^{1/2}\\
	&\le C\left(p\right)\left(p_{n}h_{n}^2\right)^{1/2}\\
	&\le C\left(p\right)\Delta_{n}^{3/2}
	\end{align*}
	because of BDG inequality, H\"{o}lder's inequality, Fubini's theorem and the fact that $h_{n}=\Delta_{n}/p_{n}\le \Delta_{n}^2$, and the same evaluation can be proved for $s_{j,n}^{\left(2\right)}$. It also holds
	\begin{align*}
	\left\|r_{j,n}^{\left(3\right)}\right\|_{p}
	&=\frac{1}{p_{n}}\sum_{k=0}^{p_{n}-1}\left(k+1\right)\mathbf{E}_{\theta^{\star}}\left[\left|\int_{j\Delta_{n}+kh_{n}}^{j\Delta_{n}+\left(k+1\right)h_{n}}\left(b\left(X_{s}\right)-b\left(X_{j\Delta_{n}}\right)\right)\mathrm{d}s\right|^{p}\right]^{1/p}\\
	&\le \frac{C\left(p\right)}{p_{n}}\sum_{k=0}^{p_{n}-1}\left(k+1\right)\mathbf{E}_{\theta^{\star}}\left[\left(\int_{j\Delta_{n}+kh_{n}}^{j\Delta_{n}+\left(k+1\right)h_{n}}\left|b\left(X_{s}\right)-b\left(X_{j\Delta_{n}}\right)\right|\mathrm{d}s\right)^{p}\right]^{1/p}\\
	&\le \frac{C\left(p\right)}{p_{n}}\sum_{k=0}^{p_{n}-1}\left(k+1\right)h_{n}^{1-1/p}\mathbf{E}_{\theta^{\star}}\left[\int_{j\Delta_{n}+kh_{n}}^{j\Delta_{n}+\left(k+1\right)h_{n}}\left|b\left(X_{s}\right)-b\left(X_{j\Delta_{n}}\right)\right|^{p}\mathrm{d}s\right]^{1/p}\\
	&\le \frac{C\left(p\right)}{p_{n}}\sum_{k=0}^{p_{n}-1}\left(k+1\right)h_{n}^{1-1/p}\left(\int_{j\Delta_{n}+kh_{n}}^{j\Delta_{n}+\left(k+1\right)h_{n}}\mathbf{E}_{\theta^{\star}}\left[\left|b\left(X_{s}\right)-b\left(X_{j\Delta_{n}}\right)\right|^{p}\right]\mathrm{d}s\right)^{1/p}\\
	&\le \frac{C\left(p\right)}{p_{n}}\sum_{k=0}^{p_{n}-1}\left(k+1\right)h_{n}\left(\sup_{s\in\left[j\Delta_{n},\left(j+1\right)\Delta_{n}\right]}\mathbf{E}_{\theta^{\star}}\left[\left|b\left(X_{s}\right)-b\left(X_{j\Delta_{n}}\right)\right|^{p}\right]\right)^{1/p}\\
	&\le \frac{C\left(p\right)\Delta_{n}^{1/2}h_{n}}{p_{n}}\sum_{k=0}^{p_{n}-1}\left(k+1\right)\\
	&\le C\left(p\right)\Delta_{n}^{3/2}
	\end{align*}
	by H\"{o}lder's inequality and Fubini's theorem, and same evaluation holds for $s_{j,n}^{\left(3\right)}$: $\left\|s_{j,n}^{\left(3\right)}\right\|_{p}\le C\left(p\right)\Delta_{n}^{3/2}$. Hence we obtain the evaluation for $\left\|e_{j,n}\right\|_{p}$. 
	
	\noindent In the next place, we show (ii) holds. Note that it is sufficient to see only the moments for $r_{j,n}^{\left(1\right)}\zeta_{j+1,n}^{T}$ and $s_{j,n}^{\left(1\right)}\left(\zeta_{j+2,n}'\right)^{T}$ because H\"{o}lder's inequality and orthogonality are applicable for the others. We have the following expression for $r_{j,n}^{\left(1\right)}$ and $s_{j,n}^{\left(1\right)}$:
	\begin{align*}
	r_{j,n}^{\left(1\right)}&=\frac{1}{p_{n}}\sum_{i=0}^{p_{n}-1}\left(i+1\right)\left(a\left(X_{j\Delta_{n}+ih_{n}}\right)-a\left(X_{j\Delta_{n}}\right)\right)\int_{j\Delta_{n}+ih_{n}}^{j\Delta_{n}+\left(i+1\right)h_{n}}\mathrm{d}w_{s},\\
	s_{j,n}^{\left(1\right)}&=\frac{1}{p_{n}}\sum_{i=0}^{p_{n}-1}\left(p_{n}-i-1\right)\left(a\left(X_{\left(j+1\right)\Delta_{n}+ih_{n}}\right)-a\left(X_{j\Delta_{n}}\right)\right)\int_{\left(j+1\right)\Delta_{n}+ih_{n}}^{\left(j+1\right)\Delta_{n}+\left(i+1\right)h_{n}}\mathrm{d}w_{s}.
	\end{align*}
	Let us define for all $\ell=p_{n},\ldots,\left(k_{n}-2\right)p_{n}+p_{n}-1$, $\ell_{1}\left(\ell\right)=\left\lfloor\ell/p_{n}\right\rfloor$, and $\ell_{2}\left(\ell\right)=\ell-\ell_{1}\left(\ell\right)$,
	\begin{align*}
	\mathbb{D}_{\ell}^{n}
	&=\sum_{j=1}^{\ell_{1}\left(\ell\right)}\sum_{i=0}^{\ell_{2}\left(\ell\right)}\frac{i+1}{p_{n}}\mathbb{B}_{j}^{n}
	\left(a\left(X_{j\Delta_{n}+ih_{n}}\right)-a\left(X_{j\Delta_{n}}\right)\right)
	\left[\left(\int_{j\Delta_{n}+ih_{n}}^{j\Delta_{n}+\left(i+1\right)h_{n}}\mathrm{d}w_{s}\right)^{\otimes 2}\right],\\
	\mathbf{D}_{\ell}^{n}&=\sum_{j=1}^{\ell_{1}\left(\ell\right)}\sum_{i=0}^{\ell_{2}\left(\ell\right)}\frac{i+1}{p_{n}}\mathbb{B}_{j}^{n}
	\left(a\left(X_{j\Delta_{n}+ih_{n}}\right)-a\left(X_{j\Delta_{n}}\right)\right)
	\left[h_{n}I_{r}\right],
	\end{align*}
	and then we have 
	$\sum_{j=1}^{k_{n}-2}\mathbb{B}_{j}^{n}\left[r_{j,n}^{\left(1\right)}\left(\zeta_{j+1,n}\right)^{T}\right]=\mathbb{D}_{\left(k_{n}-2\right)p_{n}+p_{n}-1}^{n}.
	$
	We can easily observe that $\mathbb{D}_{\ell}^{n}-\mathbf{D}_{\ell}^{n}$ is a martingale with respect to $\left\{\mathcal{H}_{\ell_{1}\left(\ell\right),\ell_{2}\left(\ell\right)}^{n}\right\}$. Then Burkholder's inequality is applicable and it follows that
	\begin{align*}
	&\mathbf{E}_{\theta^{\star}}\left[\left|\mathbb{D}_{\left(k_{n}-2\right)p_{n}+p_{n}-1}^{n}-\mathbf{D}_{\left(k_{n}-2\right)p_{n}+p_{n}-1}^{n}\right|^{p}\right]\\
	&\le C\left(p\right)\mathbf{E}_{\theta^{\star}}\left[\left|\sum_{j=1}^{k_{n}-2}\sum_{i=0}^{p_{n}-1}\left(\frac{i+1}{p_{n}}\right)^{2}\left\|\mathbb{B}_{j}^{n}\right\|^{2}\left\|a\left(X_{j\Delta_{n}+ih_{n}}\right)-a\left(X_{j\Delta_{n}}\right)\right\|^{2}\right.\right.\\
	&\hspace{5cm}\left.\left.\times\left(\left|\int_{j\Delta_{n}+ih_{n}}^{j\Delta_{n}+\left(i+1\right)h_{n}}\mathrm{d}w_{s}\right|^{4}+r^{2}h_{n}^{2}\right)\right|^{p/2}\right]\\
	&\le C\left(p\right)\frac{n^{p/2}}{n}\sum_{j=1}^{k_{n}-2}\sum_{i=0}^{p_{n}-1}\mathbf{E}_{\theta^{\star}}\left[\left|\left\|\mathbb{B}_{j}^{n}\right\|^{2}\left\|a\left(X_{j\Delta_{n}+ih_{n}}\right)-a\left(X_{j\Delta_{n}}\right)\right\|^{2}\right.\right.\\
	&\hspace{5cm}\left.\left.\times\left(\left|\int_{j\Delta_{n}+ih_{n}}^{j\Delta_{n}+\left(i+1\right)h_{n}}\mathrm{d}w_{s}\right|^{4}+r^{2}h_{n}^{2}\right)\right|^{p/2}\right]\\
	&\le C\left(p\right)\frac{n^{p/2}}{n}\sum_{j=1}^{k_{n}-2}\sum_{i=0}^{p_{n}-1}\mathbf{E}_{\theta^{\star}}\left[\left\|a\left(X_{j\Delta_{n}+ih_{n}}\right)-a\left(X_{j\Delta_{n}}\right)\right\|^{2p}\left(\left|\int_{j\Delta_{n}+ih_{n}}^{j\Delta_{n}+\left(i+1\right)h_{n}}\mathrm{d}w_{s}\right|^{4}-r^{2}h_{n}^{2}\right)^{p}\right]^{1/2}\\
	&\le C\left(p\right)\frac{n^{p/2}}{n}\sum_{j=1}^{k_{n}-2}\sum_{i=0}^{p_{n}-1}\mathbf{E}_{\theta^{\star}}\left[\left\|a\left(X_{j\Delta_{n}+ih_{n}}\right)-a\left(X_{j\Delta_{n}}\right)\right\|^{4p}\right]^{1/4}\\
	&\hspace{3cm}\times \mathbf{E}_{\theta^{\star}}\left[\left(\left|\int_{j\Delta_{n}+ih_{n}}^{j\Delta_{n}+\left(i+1\right)h_{n}}\mathrm{d}w_{s}\right|^{4}+r^{2}h_{n}^{2}\right)^{2p}\right]^{1/4}\\
	&\le C\left(p\right)n^{p/2}\Delta_{n}^{p/2}h_{n}^{p}\\
	&= C\left(p\right)k_{n}^{p/2}\Delta_{n}^{p}h_{n}^{p/2}\\
	&\le C\left(p\right)k_{n}^{p/2}\Delta_{n}^{3p}.
	\end{align*}
	Hence we have $\left\|\mathbb{D}_{\left(k_{n}-2\right)p_{n}+p_{n}-1}^{n}-\mathbf{D}_{\left(k_{n}-2\right)p_{n}+p_{n}-1}^{n}\right\|_{p}\le C\left(p\right)k_{n}^{1/2}\Delta_{n}$. Furthermore, let us define
	\begin{align*}
	\mathbf{D}_{\ell}^{1,n}&=\sum_{j=1}^{\ell_{1}\left(\ell\right)}\sum_{i=0}^{\ell_{2}\left(\ell\right)}\frac{i+1}{p_{n}}\mathbb{B}_{j}^{n}
	\left(a\left(X_{j\Delta_{n}+ih_{n}}\right)-\mathbf{E}_{\theta^{\star}}\left[a\left(X_{j\Delta_{n}+ih_{n}}\right)|\mathcal{H}_{j}^{n}\right]\right)
	\left[h_{n}I_{r}\right],\\
	\mathbf{D}_{\ell}^{2,n}&=\sum_{j=1}^{\ell_{1}\left(\ell\right)}\sum_{i=0}^{\ell_{2}\left(\ell\right)}\frac{i+1}{p_{n}}\mathbb{B}_{j}^{n}
	\left(\mathbf{E}_{\theta^{\star}}\left[a\left(X_{j\Delta_{n}+ih_{n}}\right)|\mathcal{H}_{j}^{n}\right]-a\left(X_{j\Delta_{n}}\right)\right)
	\left[h_{n}I_{r}\right],
	\end{align*}
	and clearly we have $\mathbf{D}_{\ell}^{n}=\mathbf{D}_{\ell}^{1,n}+\mathbf{D}_{\ell}^{2,n}$. In addition, we see $\left\{\mathbf{D}_{jp_{n}+p_{n}-1}^{1,n}\right\}_{j=1,\ldots,k_{n}-2}$ is a martingale with respect to $\left\{\mathcal{H}_{j}^{n}\right\}_{j=1,\ldots,k_{n}-2}$, and then Burkholder's inequality leads to
	\begin{align*}
	&\mathbf{E}_{\theta^{\star}}\left[\left|\mathbf{D}_{\left(k_{n}-2\right)p_{n}+p_{n}-1}^{1,n}\right|^{p}\right]\\
	&\le \mathcal{C}\left(p\right)\mathbf{E}_{\theta^{\star}}\left[\left|\sum_{j=1}^{k_{n}-2}p_{n}\sum_{i=0}^{p_{n}-1}\left(\frac{i+1}{p_{n}}\right)^{2}\left\|\mathbb{B}_{j}^{n}\right\|^{2}
	\left\|a\left(X_{j\Delta_{n}+ih_{n}}\right)-\mathbf{E}_{\theta^{\star}}\left[a\left(X_{j\Delta_{n}+ih_{n}}\right)|\mathcal{H}_{j}^{n}\right]\right\|^{2}
	h_{n}^2\right|^{p/2}\right]\\
	&\le \mathcal{C}\left(p\right)n^{p/2}p_{n}^{p/2}\Delta_{n}^{p/2}h_{n}^{p}\\
	&\le C\left(p\right)k_{n}^{p/2}\Delta_{n}^{3p/2}\\
	&\le C\left(p\right)k_{n}^{p}\Delta_{n}^{2p}.
	\end{align*}
	Regarding $\mathbf{D}_{\ell}^{2,n}$, we have
	\begin{align*}
	&\mathbf{E}_{\theta^{\star}}\left[\left|\mathbf{D}_{\left(k_{n}-2\right)p_{n}+p_{n}-1}^{2,n}\right|^{p}\right]\\
	&=\mathbf{E}_{\theta^{\star}}\left[\left|\sum_{j=1}^{k_{n}-2}\sum_{i=0}^{p_{n}-1}\frac{i+1}{p_{n}}\mathbb{B}_{j}^{n}
	\left(\mathbf{E}_{\theta^{\star}}\left[a\left(X_{j\Delta_{n}+ih_{n}}\right)|\mathcal{H}_{j}^{n}\right]-a\left(X_{j\Delta_{n}}\right)\right)
	\left[h_{n}I_{r}\right]\right|^{p}\right]\\
	&\le \mathbf{E}_{\theta^{\star}}\left[\left|\sum_{j=1}^{\ell_{1}}\sum_{i=0}^{\ell_{2}}\left\|\mathbb{B}_{j}^{n}\right\|
	\left\|\mathbf{E}_{\theta^{\star}}\left[a\left(X_{j\Delta_{n}+ih_{n}}\right)|\mathcal{H}_{j}^{n}\right]-a\left(X_{j\Delta_{n}}\right)\right\|\right|^{p}\right]h_{n}^{p}\\
	&\le C\left(p\right)n^{p}h_{n}^{p}\Delta_{n}^p\\
	&= C\left(p\right)k_{n}^{p}\Delta_{n}^{2p},
	\end{align*}
	since $\left\|\mathbf{E}_{\theta^{\star}}\left[a\left(X_{j\Delta_{n}+ih_{n}}\right)|\mathcal{H}_{j}^{n}\right]-a\left(X_{j\Delta_{n}}\right)\right\|\le C\Delta_{n}\left(1+\left|X_{j\Delta_{n}}\right|\right)^{C}$.
	The same evaluation holds for $s_{j,n}^{\left(1\right)}$, and hence we obtain the result.
	
	Finally we check that (iii) holds. It is only necessary to verify it for $r_{j,n}^{\left(1\right)}$ and $s_{j,n}^{\left(1\right)}$, and we show with respect to $r_{j,n}^{\left(1\right)}$. Since $\left\{\sum_{k=1}^{j}\mathbb{C}_{k,n}\left[r_{k,n}^{\left(1\right)}\right]\right\}$ for $\ell\le k_{n}-2$ is a martingale with respect to $\left\{\mathcal{H}_{j}^{n}\right\}$, we can utilise Burkholder's inequality and then
	\begin{align*}
	\mathbf{E}_{\theta^{\star}}\left[\left|\sum_{j=1}^{k_{n}-2}\mathbb{C}_{j,n}\left[r_{j,n}^{\left(1\right)}\right]\right|^{p}\right]&\le C\left(p\right)k_{n}^{p/2-1}\sum_{j=1}^{k_{n}-2}\mathbf{E}_{\theta^{\star}}\left[\left|r_{j,n}^{\left(1\right)}\right|^{2p}\right]^{1/2}\\
	&\le C\left(p\right)k_{n}^{p/2}\Delta_{n}^{p}
	\end{align*}
	and we can have the same evaluation for $s_{j,n}^{\left(1\right)}$.
\end{proof}

\begin{remark}
	When the evaluation $\left\|e_{j,n}\right\|_{p}\le C\left(p\right)\Delta_{n}$ is sufficient, then we can abbreviate $\Delta_{n}b\left(X_{j\Delta_{n}}\right)$ in the right hand side.
\end{remark}

\begin{lemma}\label{MomentLambda}
	\begin{itemize}
		\item[(a)]For all $p\ge 1$, there exists $C\left(p\right)>0$ such that for all $j=0,\ldots,k_{n}-1$ and $n\in\mathbf{N}$,
		\begin{align*}
			\left\|\lm{\varepsilon}{j}\right\|_{p}&\le C\left(p\right)p_{n}^{-1/2}.
		\end{align*}
		\item[(b)]For all $p\ge 1$, there exists $C\left(p\right)>0$ such that for all $n\in\mathbf{N}$
		\begin{align*}
		\left\|\hat{\Lambda}_{n}-\Lambda_{\star}\right\|_{p}\le C\left(p\right)\left(h_{n}+\frac{1}{\sqrt{n}}\right).
		\end{align*}
	\end{itemize}
\end{lemma}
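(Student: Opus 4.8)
For (a), recall that $\lm{\varepsilon}{j}=\frac{1}{p_{n}}\sum_{i=0}^{p_{n}-1}\varepsilon_{j\Delta_{n}+ih_{n}}$ is the average of $p_{n}$ i.i.d.\ random vectors which are centred and possess moments of every order by [A5]. Working componentwise and applying the Marcinkiewicz--Zygmund inequality (equivalently Burkholder's or Rosenthal's) to the partial sum $\sum_{i=0}^{p_{n}-1}\varepsilon_{j\Delta_{n}+ih_{n}}$, I would obtain $\bigl\|\sum_{i=0}^{p_{n}-1}\varepsilon_{j\Delta_{n}+ih_{n}}\bigr\|_{p}\le C(p)p_{n}^{1/2}$ uniformly in $j$; dividing by $p_{n}$ then gives the claimed bound $C(p)p_{n}^{-1/2}$.

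For (b), writing $Y_{ih_{n}}=X_{ih_{n}}+\Lambda_{\star}^{1/2}\varepsilon_{ih_{n}}$, the plan is to expand $(Y_{(i+1)h_{n}}-Y_{ih_{n}})^{\otimes2}$ into the diffusion square $(X_{(i+1)h_{n}}-X_{ih_{n}})^{\otimes2}$, the two cross terms, and the noise square $\Lambda_{\star}^{1/2}(\varepsilon_{(i+1)h_{n}}-\varepsilon_{ih_{n}})^{\otimes2}(\Lambda_{\star}^{1/2})^{T}$. The key identity is that [A5] gives $\mathbf{E}_{\theta^{\star}}[(\varepsilon_{(i+1)h_{n}}-\varepsilon_{ih_{n}})^{\otimes2}]=2I_{d}$ (the cross terms vanish by independence), so the normalised expectation of the noise square is exactly $\Lambda_{\star}$. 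Hence $\hat{\Lambda}_{n}-\Lambda_{\star}$ splits into (i) $\frac{1}{2n}\sum_{i}(X_{(i+1)h_{n}}-X_{ih_{n}})^{\otimes2}$, (ii) the cross terms, and (iii) the centred noise square $\frac{1}{2n}\sum_{i}\{\Lambda_{\star}^{1/2}(\varepsilon_{(i+1)h_{n}}-\varepsilon_{ih_{n}})^{\otimes2}(\Lambda_{\star}^{1/2})^{T}-2\Lambda_{\star}\}$, and I would estimate the three $L^{p}$-norms separately.

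Term (i) I would bound by the triangle inequality together with the standard increment estimate $\|X_{(i+1)h_{n}}-X_{ih_{n}}\|_{2p}\le C(p)h_{n}^{1/2}$ (valid under [A1]--[A2] and the moment bound [A1]-(iii)), which yields $\|(X_{(i+1)h_{n}}-X_{ih_{n}})^{\otimes2}\|_{p}\le C(p)h_{n}$ and therefore a bound $C(p)h_{n}$ for the whole average. Terms (ii) and (iii) are centred, but because the increments $\varepsilon_{(i+1)h_{n}}-\varepsilon_{ih_{n}}$ of consecutive $i$ share the variable $\varepsilon_{(i+1)h_{n}}$, the summands are only $1$-dependent rather than independent; this is the main obstacle. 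I would resolve it by splitting each sum into its even- and odd-indexed subsums, within which the summands use disjoint noise variables and hence are independent (for (ii), independent after conditioning on the whole path of $X$, which is independent of the noise). Burkholder's inequality then applies to each subsum: for (iii) the summands are bounded in every $L^{p}$, giving $\|\sum_{i}\{\cdots\}\|_{p}\le C(p)n^{1/2}$ and thus a contribution $C(p)n^{-1/2}$; for (ii), conditioning on $X$ and using $\mathbf{E}_{\theta^{\star}}[(\sum_{i}|X_{(i+1)h_{n}}-X_{ih_{n}}|^{2})^{p/2}]\le C(p)(nh_{n})^{p/2}$ gives $\|\cdot\|_{p}\le C(p)(nh_{n})^{1/2}$, hence a contribution $C(p)(h_{n}/n)^{1/2}\le C(p)n^{-1/2}$ since $h_{n}\to0$.

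Summing the three bounds yields $\|\hat{\Lambda}_{n}-\Lambda_{\star}\|_{p}\le C(p)(h_{n}+n^{-1/2})$, as required. I expect the only genuinely delicate point to be the dependence structure of the noise increments in (ii) and (iii); everything else reduces to the increment estimates already available under [A1]--[A2] and to conditional Burkholder/Minkowski arguments of the same flavour as in the proof of Lemma \ref{ExpansionLM}.
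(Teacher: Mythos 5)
Your proposal is correct, and its skeleton coincides with the paper's: part (a) by reducing to a centred sum of independent noise variables, and part (b) by the same three-way decomposition of $\hat{\Lambda}_{n}-\Lambda_{\star}$ into the diffusion-square term (bounded by $C(p)h_{n}$ via the increment estimate $\|X_{(i+1)h_{n}}-X_{ih_{n}}\|_{2p}\le C(p)h_{n}^{1/2}$), the cross terms (bounded by $C(p)\sqrt{h_{n}/n}$), and the centred noise-square term (bounded by $C(p)n^{-1/2}$). Where you genuinely differ is in the tool used for the centred sums and in how the $1$-dependence is dissolved. The paper works with even $p$ throughout and expands the $p$-th moments directly: after expansion, only terms in which every noise variable appears to an even power survive — this is exactly where the vanishing odd moments of [A5] enter — and counting the surviving terms yields the $n^{p/2}$, resp.\ $(nh_{n})^{p/2}$, bounds. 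Moreover, rather than splitting the index set into even and odd $i$ as you do, the paper splits each increment $\varepsilon_{ih_{n}}-\varepsilon_{(i-1)h_{n}}$ algebraically into its two constituent variables, so that each resulting sum (e.g.\ $\sum_{i}(X_{ih_{n}}-X_{(i-1)h_{n}})\varepsilon_{ih_{n}}^{T}$, $\sum_{i}(\varepsilon_{ih_{n}}^{\otimes 2}-I_{d})$, $\sum_{i}\varepsilon_{ih_{n}}\varepsilon_{(i-1)h_{n}}^{T}$) uses each noise variable at most once per summand, and the moment expansion then handles the remaining dependence automatically. Your route — even/odd splitting to restore genuine independence, then Marcinkiewicz--Zygmund/Rosenthal, conditioning on the path of $X$ for the cross term — is equally valid and in one respect more economical: it uses only that the noise is centred with all moments finite, not the full vanishing of odd moments demanded by [A5]. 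What the paper's expansion buys in exchange is uniformity of method: the same elementary combinatorial computation serves for part (a) and for every piece of part (b), with no need for the conditional-independence and de-dependence devices.
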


\begin{proof}
	(a) Because of H\"{o}lder's inequality, it is enough to evaluate it in the case where $p$ is an even integer. We easily obtain
	\begin{align*}
		\mathbf{E}_{\theta^{\star}}\left[\left|\lm{\varepsilon}{j}\right|^{p}\right]&\le \sum_{\ell=1}^{d}\mathbf{E}_{\theta^{\star}}\left[\left|\lm{\varepsilon}{j}^{\left(\ell\right)}\right|^{p}\right]\\
		&=\frac{1}{p_{n}^{p}}\sum_{\ell=1}^{d}\sum_{i_{1}=0}^{p_{n}-1}\cdots\sum_{i_{p/2}=0}^{p_{n}-1}
		\mathbf{E}_{\theta^{\star}}\left[\left|\varepsilon_{j\Delta_{n}+i_{1}h_{n}}^{\left(\ell\right)}\right|^{2}\cdots\left|\varepsilon_{j\Delta_{n}+i_{p/2}h_{n}}^{\left(\ell\right)}\right|^{2}
		\right]\\
		&\le C\left(p\right)p_{n}^{-p/2}
	\end{align*}
	for [A5].
	
	\noindent (b) As (a), it is enough to evaluate in the case where $p$ is an even integer. Then we have
	\begin{align*}
	\left\|\hat{\Lambda}_{n}-\Lambda_{\star}\right\|_{p}
	&=\mathbf{E}_{\theta^{\star}}\left[\left\|\frac{1}{2n}\sum_{i=1}^{n}\left(Y_{ih_{n}}-Y_{\left(i-1\right)h_{n}}\right)^{\otimes 2}
	-\Lambda_{\star}\right\|^{p}\right]^{1/p}\\
	&\le \mathbf{E}_{\theta^{\star}}\left[\left\|\frac{1}{2n}\sum_{i=1}^{n}\left(X_{ih_{n}}-X_{\left(i-1\right)h_{n}}\right)^{\otimes 2}
	\right\|^{p}\right]^{1/p}\\
	&\qquad+\mathbf{E}_{\theta^{\star}}\left[\left\|\frac{1}{2n}\sum_{i=1}^{n}\left(X_{ih_{n}}-X_{\left(i-1\right)h_{n}}\right)\left(\varepsilon_{ih_{n}}-\varepsilon_{\left(i-1\right)h_{n}}\right)^{T}\left(\Lambda_{\star}\right)^{1/2}\right\|^{p}\right]^{1/p}\\
	&\qquad+\mathbf{E}_{\theta^{\star}}\left[\left\|\frac{1}{2n}\sum_{i=1}^{n}\left[\left(\Lambda_{\star}\right)^{1/2}\left(\varepsilon_{ih_{n}}-\varepsilon_{\left(i-1\right)h_{n}}\right)\right]^{\otimes 2}
	-\Lambda_{\star}\right\|^{p}\right]^{1/p}\\
	&\le \frac{1}{2n}\sum_{i=1}^{n}\mathbf{E}_{\theta^{\star}}\left[\left|X_{ih_{n}}-X_{\left(i-1\right)h_{n}}
	\right|^{2p}\right]^{1/p}\\
	&\qquad+\frac{C\left(p\right)}{2n}\mathbf{E}_{\theta^{\star}}\left[\left\|\sum_{i=1}^{n}\left(X_{ih_{n}}-X_{\left(i-1\right)h_{n}}\right)\varepsilon_{ih_{n}}^{T}\right\|^{p}\right]^{1/p}\\
	&\qquad+\frac{C\left(p\right)}{2n}\mathbf{E}_{\theta^{\star}}\left[\left\|\sum_{i=1}^{n}\left(X_{ih_{n}}-X_{\left(i-1\right)h_{n}}\right)\varepsilon_{\left(i-1\right)h_{n}}^{T}\right\|^{p}\right]^{1/p}\\
	&\qquad+C\left(p\right)\mathbf{E}_{\theta^{\star}}\left[\left\|\frac{1}{2n}\sum_{i=1}^{n}\varepsilon_{ih_{n}}^{\otimes 2}
	-\frac{1}{2}I_{d}\right\|^{p}\right]^{1/p}\\
	&\qquad +C\left(p\right)\mathbf{E}_{\theta^{\star}}\left[\left\|\frac{1}{2n}\sum_{i=1}^{n}\varepsilon_{\left(i-1\right)h_{n}}^{\otimes 2}
	-\frac{1}{2}I_{d}\right\|^{p}\right]^{1/p}\\
	&\qquad +\frac{C\left(p\right)}{n}\mathbf{E}_{\theta^{\star}}\left[\left\|\sum_{i=1}^{n}\varepsilon_{ih_{n}}\varepsilon_{\left(i-1\right)h_{n}}^{T}\right\|^{p}\right]^{1/p}.
	\end{align*}
	The first term of the right hand side has the evaluation
	\begin{align*}
	\frac{1}{2n}\sum_{i=1}^{n}\mathbf{E}_{\theta^{\star}}\left[\left|X_{ih_{n}}-X_{\left(i-1\right)h_{n}}
	\right|^{2p}\right]^{1/p}\le C\left(p\right)h_{n}.
	\end{align*}
	We can evaluate the second term of the right hand side
	\begin{align*}
	&\mathbf{E}_{\theta^{\star}}\left[\left\|\sum_{i=1}^{n}\left(X_{ih_{n}}-X_{\left(i-1\right)h_{n}}\right)\varepsilon_{ih_{n}}^{T}\right\|^{p}\right]\\
	&=\mathbf{E}_{\theta^{\star}}\left[\sum_{i_{1}}\cdots\sum_{i_{p/2}}\left\|\left(X_{i_{1}h_{n}}-X_{\left(i_{1}-1\right)h_{n}}\right)\varepsilon_{i_{1}h_{n}}^{T}\right\|^{2}\cdots\left\|\left(X_{i_{p/2}h_{n}}-X_{\left(i_{p/2}-1\right)h_{n}}\right)\varepsilon_{i_{p/2}h_{n}}^{T}\right\|^{2}\right]\\
	&\le \sum_{i_{1}}\cdots\sum_{i_{p/2}}\mathbf{E}_{\theta^{\star}}\left[\left\|\left(X_{i_{1}h_{n}}-X_{\left(i_{1}-1\right)h_{n}}\right)\varepsilon_{i_{1}h_{n}}^{T}\right\|^{2}\cdots\left\|\left(X_{i_{p/2}h_{n}}-X_{\left(i_{p/2}-1\right)h_{n}}\right)\varepsilon_{i_{p/2}h_{n}}^{T}\right\|^{2}\right]\\
	&\le \sum_{i_{1}}\cdots\sum_{i_{p/2}}C\left(p\right)h_{n}^{p/2}\\
	&\le C\left(p\right)\left(nh_{n}\right)^{p/2}
	\end{align*}
	and hence
	\begin{align*}
	\frac{C\left(p\right)}{2n}\mathbf{E}_{\theta^{\star}}\left[\left\|\sum_{i=1}^{n}\left(X_{ih_{n}}-X_{\left(i-1\right)h_{n}}\right)\varepsilon_{ih_{n}}^{T}\right\|^{p}\right]^{1/p}\le  C\left(p\right)\sqrt{\frac{h_{n}}{n}}.
	\end{align*}
	The evaluation for the third term can be obtained in the same manner. For the fourth term, we have
	\begin{align*}
	&C\left(p\right)\mathbf{E}_{\theta^{\star}}\left[\left\|\frac{1}{2n}\sum_{i=1}^{n}\varepsilon_{ih_{n}}^{\otimes 2}
	-\frac{1}{2}I_{d}\right\|^{p}\right]^{1/p}\\
	&=C\left(p\right)\mathbf{E}_{\theta^{\star}}\left[\left\|\frac{1}{2n}\sum_{i=1}^{n}\left(\varepsilon_{ih_{n}}^{\otimes 2}
	-I_{d}\right)\right\|^{p}\right]^{1/p}\\
	&=\frac{C\left(p\right)}{n}\mathbf{E}_{\theta^{\star}}\left[\sum_{i_{1}}\cdots\sum_{i_{p/2}}\left\|\varepsilon_{i_{1}h_{n}}^{\otimes 2}
	-I_{d}\right\|^{2}\cdots\left\|\varepsilon_{i_{p/2}h_{n}}^{\otimes 2}
	-I_{d}\right\|^{2}\right]^{1/p}\\
	&\le \frac{C\left(p\right)}{\sqrt{n}},
	\end{align*}
	and the same evaluation holds for the fifth term. Finally we obtain
	\begin{align*}
	&\frac{C\left(p\right)}{n}\mathbf{E}_{\theta^{\star}}\left[\left\|\sum_{i=1}^{n}\varepsilon_{ih_{n}}\varepsilon_{\left(i-1\right)h_{n}}^{T}\right\|^{p}\right]^{1/p}\\
	&=\frac{C\left(p\right)}{n}\mathbf{E}_{\theta^{\star}}\left[\sum_{i_{1}}\cdots\sum_{i_{p/2}}\left\|\varepsilon_{i_{1}h_{n}}\varepsilon_{\left(i_{1}-1\right)h_{n}}^{T}\right\|^{2}\cdots\left\|\varepsilon_{i_{p/2}h_{n}}\varepsilon_{\left(i_{p/2}-1\right)h_{n}}^{T}\right\|^{2}\right]^{1/p}\\
	&\le \frac{C\left(p\right)}{\sqrt{n}}.
	\end{align*}
	Hence the evaluation for $L^{p}$-norm stated above holds.
\end{proof}

\begin{lemma}\label{EvalFuncLambda}
	For every function $f$ such that $f\in C^{1}\left(\R^{d}\times \Xi;\ \R\right)$ and all the elements of $f$ and the derivatives are polynomial growth with respect to $x$ uniformly in $\vartheta$,
	\begin{align*}
	&\mathbf{E}_{\theta^{\star}}\left[k_{n}^{\epsilon_{1}}\sup_{\vartheta\in\Xi}\left|\frac{1}{k_{n}}\sum_{j=1}^{k_{n}-2}\left(f\left(\lm{Y}{j-1},\vartheta,\hat{\Lambda}_{n}\right)-f\left(\lm{Y}{j-1},\vartheta,\Lambda_{\star}\right)\right)\right|^{p}\right]^{1/p}\\
	&\le C\left(p\right)\left(n^{1/2}h_{n}^{3/2}+\frac{1}{\sqrt{p_{n}}}\right).
	\end{align*}
\end{lemma}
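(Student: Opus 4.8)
The strategy is to isolate the estimation error $\hat\Lambda_n-\Lambda_\star$, whose size is already governed by Lemma~\ref{MomentLambda}(b), and then to trade the deterministic prefactor $k_n^{\epsilon_1}$ against the rate balance guaranteed by [A6]. First I would expand $f$ in its matrix argument by the fundamental theorem of calculus; since $f$ is $C^1$ and $\Theta_{\varepsilon}$ is convex, for each $j$ and each $\vartheta$, writing $\partial_\Lambda f$ for the derivative in the matrix slot,
\begin{align*}
&f\left(\lm{Y}{j-1},\vartheta,\hat\Lambda_n\right)-f\left(\lm{Y}{j-1},\vartheta,\Lambda_\star\right)\\
&\qquad=\int_0^1\partial_\Lambda f\left(\lm{Y}{j-1},\vartheta,\Lambda_\star+u\left(\hat\Lambda_n-\Lambda_\star\right)\right)\mathrm{d}u\left[\hat\Lambda_n-\Lambda_\star\right],
\end{align*}
whence, taking moduli and the supremum over $\vartheta$,
\begin{align*}
&\sup_{\vartheta\in\Xi}\left|\frac{1}{k_n}\sum_{j=1}^{k_n-2}\left(f\left(\lm{Y}{j-1},\vartheta,\hat\Lambda_n\right)-f\left(\lm{Y}{j-1},\vartheta,\Lambda_\star\right)\right)\right|\le G_n\left\|\hat\Lambda_n-\Lambda_\star\right\|,
\end{align*}
where $G_n:=k_n^{-1}\sum_{j=1}^{k_n-2}\sup_{\vartheta\in\Xi}\sup_{u\in[0,1]}\left\|\partial_\Lambda f\left(\lm{Y}{j-1},\vartheta,\Lambda_\star+u(\hat\Lambda_n-\Lambda_\star)\right)\right\|$.

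Pulling out the deterministic factor $k_n^{\epsilon_1}$ and applying H\"older's inequality, I would bound the left-hand side of the lemma by $k_n^{\epsilon_1}\left\|G_n\right\|_{2p}\left\|\hat\Lambda_n-\Lambda_\star\right\|_{2p}$. The second factor is exactly $C(p)(h_n+n^{-1/2})$ by Lemma~\ref{MomentLambda}(b). For the first factor I would use that $\partial_\Lambda f$ is of polynomial growth in $x$ uniformly in $\vartheta$, so that each summand of $G_n$ is dominated by $C(1+|\lm{Y}{j-1}|)^{C}$ once the interpolating matrix is kept in a bounded set; since $\lm{Y}{j-1}=\lm{X}{j-1}+\Lambda_\star^{1/2}\lm{\varepsilon}{j-1}$ has moments bounded uniformly in $j$ and $n$ by [A1]-(iii) and Lemma~\ref{MomentLambda}(a), Jensen's inequality yields $\sup_n\left\|G_n\right\|_{2p}<\infty$. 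Hence the $L^p$-norm in question is at most $C(p)k_n^{\epsilon_1}(h_n+n^{-1/2})$.

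It remains to convert this into the stated rate, which is where [A6] enters. Because $2\epsilon_1=\epsilon_0$ and $nh_n\ge k_n^{\epsilon_0}$, one has $k_n^{\epsilon_1}=(k_n^{\epsilon_0})^{1/2}\le(nh_n)^{1/2}$, so $k_n^{\epsilon_1}h_n\le n^{1/2}h_n^{3/2}$; and since $\epsilon_1\le 1/2$ (we may assume $\epsilon_0\le1$ in [A6]) and $n=k_np_n$, one has $k_n^{\epsilon_1}n^{-1/2}\le k_n^{1/2}n^{-1/2}=p_n^{-1/2}$. Adding the two contributions gives precisely $C(p)(n^{1/2}h_n^{3/2}+p_n^{-1/2})$. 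The only genuinely delicate point is keeping the random interpolating argument $\Lambda_\star+u(\hat\Lambda_n-\Lambda_\star)$ inside the region where the uniform polynomial-growth estimate for $\partial_\Lambda f$ is available: I would handle this by working on the event $\{\mathrm{vech}\,\hat\Lambda_n\in\Theta_\varepsilon\}$, on which convexity of $\Theta_\varepsilon$ confines the whole segment, and by discarding its complement through Markov's inequality together with the high-order moment bound of Lemma~\ref{MomentLambda}(b), which renders that contribution negligible compared with the stated rate.
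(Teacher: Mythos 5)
Your proof is correct and follows essentially the same route as the paper's: a mean-value expansion in the $\Lambda$-slot bounding each increment by $C\left(1+\left|\lm{Y}{j-1}\right|\right)^{C}\left\|\hat{\Lambda}_{n}-\Lambda_{\star}\right\|$, H\"older's inequality to decouple this from the polynomial-growth factor, Lemma~\ref{MomentLambda}(b) for $\left\|\hat{\Lambda}_{n}-\Lambda_{\star}\right\|_{2p}$, and the rate conversion $k_{n}^{\epsilon_{1}}h_{n}\le\left(nh_{n}\right)^{1/2}h_{n}=n^{1/2}h_{n}^{3/2}$ and $k_{n}^{\epsilon_{1}}n^{-1/2}\le k_{n}^{1/2}n^{-1/2}=p_{n}^{-1/2}$ via [A6]. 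If anything, you are more explicit than the paper on two points it leaves tacit: keeping the interpolating matrix $\Lambda_{\star}+u\left(\hat{\Lambda}_{n}-\Lambda_{\star}\right)$ in the region where the uniform growth bound applies, and the harmless normalisation $\epsilon_{0}\le1$ needed for the second rate comparison.
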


\begin{proof}
	We have
	\begin{align*}
	&\mathbf{E}_{\theta^{\star}}\left[k_{n}^{\epsilon_{1}}\sup_{\vartheta\in\Xi}\left|\frac{1}{k_{n}}\sum_{j=1}^{k_{n}-2}\left(f\left(\lm{Y}{j-1},\vartheta,\hat{\Lambda}_{n}\right)-f\left(\lm{Y}{j-1},\vartheta,\Lambda_{\star}\right)\right)\right|^{p}\right]^{1/p}\\
	&\le k_{n}^{\epsilon_{1}}\left( \frac{1}{k_{n}}\sum_{j=1}^{k_{n}-2}\mathbf{E}_{\theta^{\star}}\left[\sup_{\vartheta\in\Xi}\left|f\left(\lm{Y}{j-1},\vartheta,\hat{\Lambda}_{n}\right)-f\left(\lm{Y}{j-1},\vartheta,\Lambda_{\star}\right)\right|^{p}\right]\right)^{1/p}\\
	&\le k_{n}^{\epsilon_{1}}\left( \frac{1}{k_{n}}\sum_{j=1}^{k_{n}-2}\mathbf{E}_{\theta^{\star}}\left[C\left(1+\left|\lm{Y}{j-1}\right|\right)^C\left\|\hat{\Lambda}_{n}-\Lambda_{\star}\right\|^{p}\right]\right)^{1/p}\\
	&\le C\left(p\right)k_{n}^{\epsilon_{1}}\mathbf{E}_{\theta^{\star}}\left[\left\|\hat{\Lambda}_{n}-\Lambda_{\star}\right\|^{2p}\right]^{1/2p}\\
	&\le C\left(p\right)\left(k_{n}^{\epsilon_{1}}h_{n}+\frac{k_{n}^{\epsilon_{1}}}{\sqrt{n}}\right)\\
	&\le C\left(p\right)\left(n^{1/2}h_{n}^{3/2}+\frac{1}{\sqrt{p_{n}}}\right).
	\end{align*}
\end{proof}

\subsection{LAN for the quasi-likelihoods and proof for the main theorem}
To prove the main theorem, we set some additional preliminary lemmas. 
Before the discussion, let us define the statistical random fields:
\begin{align*}
\mathbb{Y}_{1,n}^{\tau}\left(\alpha;\vartheta^{\star}\right)&=\frac{1}{k_{n}}\left(\mathbb{H}_{1,n}^{\tau}\left(\alpha;\hat{\Lambda}_{n}\right)-\mathbb{H}_{1,n}^{\tau}\left(\alpha^{\star};\hat{\Lambda}_{n}\right)\right)\\
&=-\frac{1}{2k_{n}}\sum_{j=1}^{k_{n}-2}
\left(\left(A_{n}^{\tau}\left(\lm{Y}{j-1},\alpha,\hat{\Lambda}_{n}\right)^{-1}-A_{n}^{\tau}\left(\lm{Y}{j-1},\alpha^{\star},\hat{\Lambda}_{n}\right)^{-1}\right)\left[\left(\lm{Y}{j+1}-\lm{Y}{j}\right)^{\otimes 2}\right]\right.\\
&\hspace{3cm}\left.\times \left(\frac{2}{3}\Delta_{n}\right)^{-1}+\log\frac{\det A_{n}^{\tau}\left(\lm{Y}{j-1},\alpha,\hat{\Lambda}_{n}\right)}{\det A_{n}^{\tau}\left(\lm{Y}{j-1},\alpha^{\star},\hat{\Lambda}_{n}\right)}\right),\\
\mathbb{Y}_{2,n}^{\mathrm{ML}}\left(\beta;\vartheta^{\star}\right)
&=\frac{1}{k_{n}\Delta_{n}}\left(\mathbb{H}_{2,n}\left(\beta;\hat{\alpha}_{n}\right)-\mathbb{H}_{2,n}\left(\beta^{\star};\hat{\alpha}_{n}\right)\right)\\
&=\frac{1}{k_{n}\Delta_{n}}\left(\sum_{j=1}^{k_{n}-2}A\left(\lm{Y}{j-1},\hat{\alpha}_{n}\right)^{-1}\left[b\left(\lm{Y}{j-1},\beta\right)-b\left(\lm{Y}{j-1},\beta^{\star}\right),\lm{Y}{j+1}-\lm{Y}{j}\right]\right.\\
&\hspace{2cm}\left.-\frac{\Delta_{n}}{2}\sum_{j=1}^{k_{n}-2}A\left(\lm{Y}{j-1},\hat{\alpha}_{n}\right)^{-1}
\left[b\left(\lm{Y}{j-1},\beta\right)^{\otimes 2}-b\left(\lm{Y}{j-1},\beta^{\star}\right)^{\otimes 2}\right]\right),\\
\mathbb{Y}_{2,n}^{\mathrm{Bayes}}\left(\beta;\vartheta^{\star}\right)
&=\frac{1}{k_{n}\Delta_{n}}\left(\mathbb{H}_{2,n}\left(\beta;\tilde{\alpha}_{n}\right)-\mathbb{H}_{2,n}\left(\beta^{\star};\tilde{\alpha}_{n}\right)\right)\\
&=\frac{1}{k_{n}\Delta_{n}}\left(\sum_{j=1}^{k_{n}-2}A\left(\lm{Y}{j-1},\tilde{\alpha}_{n}\right)^{-1}\left[b\left(\lm{Y}{j-1},\beta\right)-b\left(\lm{Y}{j-1},\beta^{\star}\right),\lm{Y}{j+1}-\lm{Y}{j}\right]\right.\\
&\hspace{2cm}\left.-\frac{\Delta_{n}}{2}\sum_{j=1}^{k_{n}-2}A\left(\lm{Y}{j-1},\tilde{\alpha}_{n}\right)^{-1}
\left[b\left(\lm{Y}{j-1},\beta\right)^{\otimes 2}-b\left(\lm{Y}{j-1},\beta^{\star}\right)^{\otimes 2}\right]\right).
\end{align*}
We give the locally asymptotic quadratic at $\vartheta^{\star}\in\Xi$ for $u_{1}\in\R^{m_{1}}$ and $u_{2}\in\R^{m_{2}}$,
\begin{align*}
	\mathbb{Z}_{1,n}^{\tau}\left(u_{1};\hat{\Lambda}_{n},\alpha^{\star}\right)&:=\exp\left(\Delta_{1,n}^{\tau}\left(\vartheta^{\star}\right)\left[u_{1}\right]-\frac{1}{2}\Gamma_{1}^{\tau}\left(\vartheta^{\star}\right)\left[u_{1}^{\otimes2}\right]+r_{1,n}^{\tau}\left(u;\vartheta^{\star}\right)\right),\\
	\mathbb{Z}_{2,n}^{\mathrm{ML}}\left(u_{2};\hat{\alpha}_{n},\beta^{\star}\right)&:=\exp\left(
	\Delta_{2,n}^{\mathrm{ML}}\left(\vartheta^{\star}\right)\left[u_{2}\right]
	-\frac{1}{2}\Gamma_{2}^{\mathrm{ML}}\left(\vartheta^{\star}\right)\left[u_{2}^{\otimes2}\right]
	+r_{2,n}^{\mathrm{ML}}\left(u;\vartheta^{\star}\right)\right),\\
	\mathbb{Z}_{2,n}^{\mathrm{Bayes}}\left(u_{2};\tilde{\alpha}_{n},\beta^{\star}\right)&:=\exp\left(\Delta_{2,n}^{\mathrm{Bayes}}\left(\vartheta^{\star}\right)\left[u_{2}\right]
	-\frac{1}{2}\Gamma_{2}^{\mathrm{Bayes}}\left(\vartheta^{\star}\right)\left[u_{2}^{\otimes2}\right]
	+r_{2,n}^{\mathrm{Bayes}}\left(u;\vartheta^{\star}\right)\right),
\end{align*}
where
\begin{align*}
	\Delta_{1,n}^{\tau}\left(\vartheta^{\star}\right)\left[u_{1}\right]&:=-\frac{1}{2k_{n}^{1/2}}\sum_{j=1}^{k_{n}-2}\left(\partial_{\alpha}A_{n}^{\tau}\left(\lm{Y}{j-1},\alpha^{\star},\hat{\Lambda}_{n}\right)^{-1}\left[u_{1},\left(\lm{Y}{j+1}-\lm{Y}{j}\right)^{\otimes 2}\right]\left(\frac{2\Delta_{n}}{3}\right)^{-1}\right.\\
	&\hspace{3cm}\left.+\partial_{\alpha}\log\frac{\det A_{n}^{\tau}\left(\lm{Y}{j-1},\alpha^{\star},\hat{\Lambda}_{n}\right)}{\det A_{n}^{\tau}\left(\lm{Y}{j-1},\alpha^{\star},\hat{\Lambda}_{n}\right)}\left[u_{1}\right]\right),\\
	\Delta_{2,n}^{\mathrm{ML}}\left(\vartheta^{\star}\right)\left[u_{2}\right]
	&:=\frac{1}{\left(k_{n}\Delta_{n}\right)^{1/2}}\sum_{j=1}^{k_{n}-2}A\left(\lm{Y}{j-1},\hat{\alpha}_{n}\right)^{-1}\\
	&\hspace{3cm}\left[\partial_{\beta}b\left(\lm{Y}{j-1},\beta^{\star}\right)u_{2},\lm{Y}{j+1}-\lm{Y}{j}-\Delta_{n}b\left(\lm{Y}{j-1},\beta^{\star}\right)\right],\\
	\Delta_{2,n}^{\mathrm{Bayes}}\left(\vartheta^{\star}\right)\left[u_{2}\right]
	&:=\frac{1}{\left(k_{n}\Delta_{n}\right)^{1/2}}\sum_{j=1}^{k_{n}-2}A\left(\lm{Y}{j-1},\tilde{\alpha}_{n}\right)^{-1}\\
	&\hspace{3cm}\left[\partial_{\beta}b\left(\lm{Y}{j-1},\beta^{\star}\right)u_{2},\lm{Y}{j+1}-\lm{Y}{j}-\Delta_{n}b\left(\lm{Y}{j-1},\beta^{\star}\right)\right]
\end{align*}
and
\begin{align*}
	&\Gamma_{1,n}^{\tau}\left(\alpha;\vartheta^{\star}\right)\left[u_{1}^{\otimes 2}\right]\\
	&:=\frac{1}{2k_{n}}\sum_{j=1}^{k_{n}-2}\left(\partial_{\alpha}^{2}A_{n}^{\tau}\left(\lm{Y}{j-1},\alpha,\hat{\Lambda}_{n}\right)^{-1}\left[u_{1}^{\otimes 2},\left(\lm{Y}{j+1}-\lm{Y}{j}\right)^{\otimes 2}\right]\left(\frac{2\Delta_{n}}{3}\right)^{-1}\right.\\
	&\hspace{3cm}\left.+\partial_{\alpha}^{2}\log\frac{\det A_{n}^{\tau}\left(\lm{Y}{j-1},\alpha,\hat{\Lambda}_{n}\right)}{\det A_{n}^{\tau}\left(\lm{Y}{j-1},\alpha^{\star},\hat{\Lambda}_{n}\right)}\left[u_{1}^{\otimes 2}\right]\right),\\
	&\Gamma_{2,n}^{\mathrm{ML}}\left(\beta;\vartheta^{\star}\right)\left[u_{2}^{\otimes 2}\right]\\
	&:=\frac{1}{k_{n}\Delta_{n}}\sum_{j=1}^{k_{n}-2}
	A\left(\lm{Y}{j-1},\hat{\alpha}_{n}\right)^{-1}\left[\partial_{\beta}b\left(\lm{Y}{j-1},\beta\right)\left[u_{2}\right],\Delta_{n}b\left(\lm{Y}{j-1},\beta\right)\left[u_{2}\right]\right]\\
	&\qquad-\frac{1}{k_{n}\Delta_{n}}\sum_{j=1}^{k_{n}-2}
	A\left(\lm{Y}{j-1},\hat{\alpha}_{n}\right)^{-1}\left[\partial_{\beta}^{2}b\left(\lm{Y}{j-1},\beta\right)\left[u_{2}^{\otimes2}\right],\lm{Y}{j+1}-\lm{Y}{j}-\Delta_{n}b\left(\lm{Y}{j-1},\beta\right)\right],\\
	&\Gamma_{2,n}^{\mathrm{Bayes}}\left(\beta;\vartheta^{\star}\right)\left[u_{2}^{\otimes 2}\right]\\
	&:=\frac{1}{k_{n}\Delta_{n}}\sum_{j=1}^{k_{n}-2}
	A\left(\lm{Y}{j-1},\tilde{\alpha}_{n}\right)^{-1}\left[\partial_{\beta}b\left(\lm{Y}{j-1},\beta\right)\left[u_{2}\right],\Delta_{n}b\left(\lm{Y}{j-1},\beta\right)\left[u_{2}\right]\right]\\
	&\qquad-\frac{1}{k_{n}\Delta_{n}}\sum_{j=1}^{k_{n}-2}
	A\left(\lm{Y}{j-1},\tilde{\alpha}_{n}\right)^{-1}\left[\partial_{\beta}^{2}b\left(\lm{Y}{j-1},\beta\right)\left[u_{2}^{\otimes2}\right],\lm{Y}{j+1}-\lm{Y}{j}-\Delta_{n}b\left(\lm{Y}{j-1},\beta\right)\right],
\end{align*}
and
\begin{align*}
	&\Gamma_{1}^{\tau}\left(\vartheta^{\star}\right)\left[u_{1}^{\otimes 2}\right]\\
	&:=\frac{1}{2}\left.\int_{\R^{d}}\left(\partial_{\alpha}^{2}A^{\tau}\left(x,\alpha,\Lambda_{\star}\right)^{-1}\left[u_{1}^{\otimes 2},A^{\tau}\left(x,\alpha^{\star},\Lambda_{\star}\right)\right]+\partial_{\alpha}^{2}\log\frac{\det A^{\tau}\left(x,\alpha,\Lambda_{\star}\right)}{\det A^{\tau}\left(x,\alpha^{\star},\Lambda_{\star}\right)}\left[u_{1}^{\otimes 2}\right]\right)
	\right|_{\alpha=\alpha^{\star}}\\
	&\hspace{2cm}\times\nu\left(\mathrm{d}x\right),\\
	&\Gamma_{2}\left(\vartheta^{\star}\right)\left[u_{2}^{\otimes 2}\right]\\
	&:=\frac{1}{2}\int_{\R^{d}}\left(A\left(x,\alpha\right)^{-1}\left[\partial_{\beta}b\left(x,\beta^{\star}\right)\left[u_{2}\right],\partial_{\beta}b\left(x,\beta^{\star}\right)\left[u_{2}\right]\right]\right)
	\nu\left(\mathrm{d}x\right),
\end{align*}
and
\begin{align*}
	r_{1,n}^{\tau}\left(u;\vartheta^{\star}\right)&:=\int_{0}^{1}\left(1-s\right)\left\{\Gamma_{1}^{\tau}\left(\vartheta^{\star}\right)\left[u_{1}^{\otimes2}\right]-\Gamma_{1,n}^{\tau}\left(\alpha^{\star}+sk_{n}^{-1/2}u_{1};\vartheta^{\star}\right)\left[u_{1}^{\otimes2}\right]\right\}\mathrm{d}s,\\
	r_{2,n}^{\mathrm{ML}}\left(u;\vartheta^{\star}\right)&:=\int_{0}^{1}\left(1-s\right)\left\{\Gamma_{2}^{\mathrm{ML}}\left(\vartheta^{\star}\right)\left[u_{2}^{\otimes2}\right]-\Gamma_{2,n}^{\mathrm{ML}}\left(\beta^{\star}+sT_{n}^{-1/2}u_{2};\vartheta^{\star}\right)\left[u_{2}^{\otimes2}\right]\right\}\mathrm{d}s,\\
	r_{2,n}^{\mathrm{Bayes}}\left(u;\vartheta^{\star}\right)&:=\int_{0}^{1}\left(1-s\right)\left\{\Gamma_{2}^{\mathrm{Bayes}}\left(\vartheta^{\star}\right)\left[u_{2}^{\otimes2}\right]-\Gamma_{2,n}^{\mathrm{Bayes}}\left(\beta^{\star}+sT_{n}^{-1/2}u_{2};\vartheta^{\star}\right)\left[u_{2}^{\otimes2}\right]\right\}\mathrm{d}s.
\end{align*}

We evaluate the moments of these random variables and fields in the following lemmas.

\begin{lemma}\label{alphaScore}
\begin{enumerate}
\item[(a)] For every $p>1$,
\begin{align*}
\sup_{n\in\mathbf{N}}\mathbf{E}_{\theta^{\star}}\left[\left|\Delta_{1,n}^{\tau}\left(\vartheta^{\star}\right)\right|^{p}\right]<\infty.
\end{align*}
\item[(b)] 
Let $\epsilon_{1}=\epsilon_{0}/2$. Then for every $p>0$,
\begin{align*}
\sup_{n\in\mathbf{N}}\mathbf{E}_{\theta^{\star}}\left[\left(\sup_{\alpha\in\Theta_{1}}k_{n}^{\epsilon_{1}}\left|\mathbb{Y}_{1,n}^{\tau}\left(\alpha;\vartheta^{\star}\right)-\mathbb{Y}_{1}^{\tau}\left(\alpha;\vartheta^{\star}\right)\right|\right)^{p}
\right]<\infty.
\end{align*}
\end{enumerate}
\end{lemma}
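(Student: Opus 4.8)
The plan is to base both parts on a single computation, namely the $\mathcal{H}_j^n$-conditional mean of the squared local-mean increment, and then to exploit the classical score cancellation between the quadratic term and the $\log\det$ term. First I would insert the expansion of Lemma~\ref{ExpansionLM}(i) and compute $\mathbf{E}_{\theta^\star}[(\lm{Y}{j+1}-\lm{Y}{j})^{\otimes2}\mid\mathcal{H}_j^n]$ with the help of Lemma~\ref{EvalZeta}: the diffusion contribution $a(X_{j\Delta_n})(\zeta_{j+1,n}+\zeta_{j+2,n}')$ yields $(m_n+m_n')\Delta_n A(X_{j\Delta_n})=\tfrac23\Delta_n A(X_{j\Delta_n})+O(\Delta_n p_n^{-2})$, the noise contribution yields $\tfrac{2}{p_n}\Lambda_\star=2\Delta_n^{1/(\tau-1)}\Lambda_\star$, and the cross terms together with the $e_{j,n}$-terms are of higher order by Lemma~\ref{ExpansionLM}(ii)--(iii) and Lemma~\ref{ApproxFunctional}. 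Because $\tfrac23\Delta_n A+2\Delta_n^{1/(\tau-1)}\Lambda_\star=\tfrac23\Delta_n A_n^\tau(\cdot,\alpha^\star,\Lambda_\star)$ holds \emph{exactly}, this gives $\tfrac{3}{2\Delta_n}\mathbf{E}_{\theta^\star}[(\lm{Y}{j+1}-\lm{Y}{j})^{\otimes2}\mid\mathcal{H}_j^n]=A_n^\tau(X_{j\Delta_n},\alpha^\star,\Lambda_\star)+O(\Delta_n)$, which is the identity that drives everything.

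For part~(a) I would fix a coordinate direction $u_1=e_k$ and split $\Delta_{1,n}^\tau$ into a fluctuation part and a conditional-mean part. Feeding the conditional mean into $\partial_\alpha(A_n^\tau)^{-1}[u_1,\cdot]\tfrac{3}{2\Delta_n}$ and using $\partial_\alpha(A_n^\tau)^{-1}=-(A_n^\tau)^{-1}(\partial_\alpha A_n^\tau)(A_n^\tau)^{-1}$ together with $\partial_\alpha\log\det A_n^\tau=\mathrm{tr}((A_n^\tau)^{-1}\partial_\alpha A_n^\tau)$, the two leading pieces cancel, leaving an $O(\Delta_n)$ residual per summand; summed against $k_n^{-1/2}$ this is $O(\sqrt{k_n}\,\Delta_n)\to0$ since $k_n\Delta_n^2\to0$. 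The remaining fluctuation is a sum of (approximate) martingale increments for $\{\mathcal{H}_j^n\}$ whose conditional second moments are $O(1)$ after the $\Delta_n^{-1}$ scaling, so Burkholder's inequality—applied exactly as in the proof of Lemma~\ref{ExpansionLM}, where the adjacent-block (one-dependent) structure is handled by indexing on the fine mesh—bounds its $L^p$-norm uniformly in $n$. Replacing $\hat\Lambda_n$ by $\Lambda_\star$ and $\lm{Y}{j-1}$ by $X_{j\Delta_n}$ throughout costs only lower order, by Lemma~\ref{EvalFuncLambda}, Lemma~\ref{MomentLambda} and Lemma~\ref{ApproxLM}.

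For part~(b) I would first pass the supremum over $\alpha$ through the assumed Sobolev embedding $W^{1,p}(\Theta_1)\hookrightarrow C(\overline{\Theta}_1)$, reducing the claim to pointwise-in-$\alpha$ $L^p$-bounds, at rate $k_n^{\epsilon_1}$, for $\mathbb{Y}_{1,n}^\tau-\mathbb{Y}_1^\tau$ and for its $\alpha$-derivative. Using the same conditional-mean replacement, the fluctuation (now divided by $k_n$ rather than $\sqrt{k_n}$), weighted by $k_n^{\epsilon_1}$, is $O(k_n^{\epsilon_1-1/2})\to0$ by Burkholder; the successive substitutions $\hat\Lambda_n\to\Lambda_\star$ (Lemma~\ref{EvalFuncLambda}), $\lm{Y}{j-1}\to X_{j\Delta_n}$ (Lemma~\ref{ApproxSumLM}) and $A_n^\tau\to A^\tau$ then reduce the field to $-\tfrac{1}{2k_n}\sum_{j}G(X_{j\Delta_n},\alpha)$, where $G(x,\alpha)=\mathrm{tr}\{(A^\tau(x,\alpha))^{-1}A^\tau(x,\alpha^\star)-I_d\}+\log\tfrac{\det A^\tau(x,\alpha)}{\det A^\tau(x,\alpha^\star)}$ satisfies $\nu(G(\cdot,\alpha))=-2\,\mathbb{Y}_1^\tau(\alpha;\vartheta^\star)$. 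The conclusion is then exactly the rate-$k_n^{\epsilon_1}$ uniform ergodic estimate of Remark~\ref{RemarkCentred}.

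The step I expect to be the main obstacle is bookkeeping that \emph{every} $k_n^{\epsilon_1}$-weighted error term genuinely vanishes under [A6]. The two binding conditions are $k_n^{\epsilon_1}\Delta_n^{1/2}\to0$—which is equivalent to $k_n\Delta_n^2\to0$ once $\epsilon_1=\epsilon_0/2<\tfrac14$, and this bound on $\epsilon_0$ is precisely what $nh_n\ge k_n^{\epsilon_0}$ forces—and $k_n^{\epsilon_1}\Delta_n^{(2-\tau)/(\tau-1)}\to0$, which is the tightest (and is trivial at $\tau=2$, where the noise correction in $A_n^\tau$ is exact); shrinking $\epsilon_0$ if necessary, still admissible since $nh_n\ge k_n^{\epsilon_0}$ persists for smaller exponents, keeps both valid. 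The conceptually delicate point underlying the whole argument is the exactness of the score cancellation: it is only because $\tfrac23\Delta_n A_n^\tau$ matches the conditional second moment to the correct order that the quadratic and log-determinant contributions combine into a centred, Burkholder-controllable martingale instead of an $O(k_n)$ drift that would destroy both estimates.
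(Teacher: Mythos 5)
Your strategy is broadly the paper's own---conditional second moment of the local-mean increment, score cancellation, block-splitting plus Burkholder for the fluctuation, Sobolev embedding and Remark \ref{RemarkCentred} for part (b)---and your part (b) is essentially sound, because at the $k_{n}^{-1}$ normalisation with weight $k_{n}^{\epsilon_{1}}$ the crude substitution estimates do suffice: $k_{n}^{\epsilon_{1}}\Delta_{n}^{1/2}=(k_{n}^{\epsilon_{0}}\Delta_{n})^{1/2}\le(k_{n}\Delta_{n}^{2})^{1/2}\to0$ by [A6] (and your explicit flagging of the $A_{n}^{\tau}$ versus $A^{\tau}$ discrepancy for $\tau\in(1,2)$ is a point the paper leaves implicit). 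The genuine gap is in part (a), where the normalisation is $k_{n}^{-1/2}$, and it sits exactly where you write that replacing $\lm{Y}{j-1}$ by $X_{j\Delta_{n}}$ ``costs only lower order, by \ldots\ Lemma \ref{ApproxLM}''. Lemma \ref{ApproxLM} is a termwise $L^{p}$-bound of order $\Delta_{n}^{1/2}$, and a termwise bound of that size is useless at this scaling:
\begin{align*}
\frac{1}{\sqrt{k_{n}}}\sum_{j=1}^{k_{n}-2}C\Delta_{n}^{1/2}
= C\sqrt{k_{n}\Delta_{n}}=C\sqrt{T_{n}}\to\infty .
\end{align*}
For the same reason your assertion that the score cancellation leaves an ``$O(\Delta_{n})$ residual per summand'' is not correct as stated: it holds only when the coefficients $\partial_{\alpha}(A_{n}^{\tau})^{-1}$ and $\partial_{\alpha}\log\det A_{n}^{\tau}$ are evaluated at $X_{j\Delta_{n}}$, whereas in $\Delta_{1,n}^{\tau}$ they sit at $\lm{Y}{j-1}$; cancelling the score at $\lm{Y}{j-1}$ instead leaves
$\partial_{\alpha}A_{n}^{\tau}\left(\lm{Y}{j-1},\alpha^{\star},\Lambda_{\star}\right)^{-1}\left[u_{1},A_{n}^{\tau}\left(X_{j\Delta_{n}},\alpha^{\star},\Lambda_{\star}\right)-A_{n}^{\tau}\left(\lm{Y}{j-1},\alpha^{\star},\Lambda_{\star}\right)\right]$,
which is of order $\Delta_{n}^{1/2}$, not $\Delta_{n}$. (The replacement $\hat{\Lambda}_{n}\to\Lambda_{\star}$, by contrast, really is harmless by Lemma \ref{MomentLambda}, since $\sqrt{k_{n}}\,(h_{n}+n^{-1/2})\to0$.)

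What is missing is a second level of conditioning for precisely this replacement error---and this is what Lemma \ref{ApproxFunctional} is in the paper for, not merely the cross terms to which you relegate it. Although $\lm{Y}{j-1}-X_{j\Delta_{n}}$ is of size $\Delta_{n}^{1/2}$, its $\mathcal{H}_{j-1}^{n}$-conditional mean is of size $\Delta_{n}$, because the Brownian and noise contributions are conditionally centred; hence the replacement error splits into a drift of order $\Delta_{n}$ per summand, contributing $O(\sqrt{k_{n}}\,\Delta_{n})\to0$ after the $k_{n}^{-1/2}$ scaling, plus a martingale-difference array (after the mod-$3$ splitting that handles the block overlap) with increments of order $\Delta_{n}^{1/2}$, which Burkholder bounds by $C\Delta_{n}^{1/2}\to0$ after the same scaling. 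This two-level argument is exactly how the paper controls its remainder $R_{1,n}^{\tau\left(1\right)}$ (``Lemma \ref{ApproxFunctional} and Burkholder's inequality''); equivalently, the paper avoids the issue in the main term altogether by constructing $\widehat{A_{j,n}^{\tau}}$ so that $\mathbf{E}_{\theta^{\star}}[\widehat{A_{j,n}^{\tau}}|\mathcal{H}_{j}^{n}]=A_{n}^{\tau}\left(X_{j\Delta_{n}},\alpha^{\star},\Lambda_{\star}\right)$ holds exactly and placing all coefficients of $M_{1,n}^{\tau}$ at $X_{j\Delta_{n}}$, so the score cancellation is exact there. Once you route the $\lm{Y}{j-1}\leftrightarrow X_{j\Delta_{n}}$ substitution through this conditional-mean-plus-Burkholder argument rather than through Lemma \ref{ApproxLM}, your part (a) closes and the proof coincides with the paper's.
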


\begin{proof}
We start with the proof for (a).
By Lemma \ref{ExpansionLM}, we obtain a decomposition
\begin{align*}
	\Delta_{1,n}^{\tau}\left(\vartheta^{\star}\right)\left[u_{1}\right]
	=M_{1,n}^{\tau}+R_{1,n}^{\tau\left(1\right)}+R_{1,n}^{\tau\left(2\right)}+R_{1,n}^{\tau\left(3\right)}
\end{align*}
for
\begin{align*}
	M_{1,n}^{\tau}&:=-\frac{1}{2k_{n}^{1/2}}\sum_{j=1}^{k_{n}-2}\left(\partial_{\alpha}A_{n}^{\tau}\left(X_{j\Delta_{n}},\alpha^{\star},\Lambda_{\star}\right)^{-1}\left[u_{1},\widehat{A_{j,n}^{\tau}}\right]+\partial_{\alpha}\log\det A_{n}^{\tau}\left(X_{j\Delta_{n}},\alpha^{\star},\Lambda_{\star}\right)\left[u_{1}\right]\right),\\
	R_{1,n}^{\tau^{\left(1\right)}}&:=\frac{1}{2k_{n}^{1/2}}\sum_{j=1}^{k_{n}-2}\left(\partial_{\alpha}A_{n}^{\tau}\left(X_{j\Delta_{n}},\alpha^{\star},\Lambda_{\star}\right)^{-1}\left[u_{1},\widehat{A_{j,n}^{\tau}}\right]+\partial_{\alpha}\log\det A_{n}^{\tau}\left(X_{j\Delta_{n}},\alpha^{\star},\Lambda_{\star}\right)\left[u_{1}\right]\right)\\
	&\qquad-\frac{1}{2k_{n}^{1/2}}\sum_{j=1}^{k_{n}-2}\left(\partial_{\alpha}A_{n}^{\tau}\left(\lm{Y}{j-1},\alpha^{\star},\Lambda_{\star}\right)^{-1}\left[u_{1},\widehat{A_{j,n}^{\tau}}\right]+\partial_{\alpha}\log\det A_{n}^{\tau}\left(\lm{Y}{j-1},\alpha^{\star},\Lambda_{\star}\right)\left[u_{1}\right]\right),\\
	R_{1,n}^{\tau\left(2\right)}
	&:=\frac{1}{2k_{n}^{1/2}}\sum_{j=1}^{k_{n}-2}\left(\partial_{\alpha}A_{n}^{\tau}\left(\lm{Y}{j-1},\alpha^{\star},\Lambda_{\star}\right)^{-1}\left[u_{1},\widehat{A_{j,n}^{\tau}}\right]+\partial_{\alpha}\log\det A_{n}^{\tau}\left(\lm{Y}{j-1},\alpha^{\star},\Lambda_{\star}\right)\left[u_{1}\right]\right)\\
	&\qquad-\frac{1}{2k_{n}^{1/2}}\sum_{j=1}^{k_{n}-2}\left(\partial_{\alpha}A_{n}^{\tau}\left(\lm{Y}{j-1},\alpha^{\star},\hat{\Lambda}_{n}\right)^{-1}\left[u_{1},\widehat{A_{j,n}^{\tau}}\right]+\partial_{\alpha}\log\det A_{n}^{\tau}\left(\lm{Y}{j-1},\alpha^{\star},\hat{\Lambda}_{n}\right)\left[u_{1}\right]\right),\\
	R_{1,n}^{\tau\left(3\right)}
	&:= -\frac{1}{2k_{n}^{1/2}}\sum_{j=1}^{k_{n}-2}\partial_{\alpha}A_{n}^{\tau}\left(\lm{Y}{3j+i-1},\alpha^{\star},\hat{\Lambda}_{n}\right)^{-1}\left[u_{1},\frac{3}{2\Delta_{n}}\left(\lm{Y}{3j+i+1}-\lm{Y}{3j+i}\right)^{\otimes 2}-\widehat{A_{3j+i,n}^{\tau}}\right],
\end{align*}
where
\begin{align*}
	\widehat{A_{j,n}^{\tau}}&:=\frac{1}{\Delta_{n}}\left[\frac{1}{\sqrt{m_{n}+m_{n}'}}a\left(X_{j\Delta_{n}},\alpha^{\star}\right)\left(\zeta_{j+1,n}+\zeta_{j+2,n}'\right)+
	\sqrt{\frac{3}{2}}\left(\Lambda_{\star}\right)^{1/2}(\lm{\varepsilon}{j+1}-\lm{\varepsilon}{j})\right]^{\otimes2}
\end{align*}
with the following property
\begin{align*}
	\mathbf{E}_{\theta^{\star}}\left[\widehat{A_{j,n}^{\tau}}|\mathcal{H}_{j}^{n}\right]
	=A\left(X_{j\Delta_{n}},\alpha^{\star}\right)
	+\frac{3}{p_{n}\Delta_{n}}\Lambda_{\star}=A\left(X_{j\Delta_{n}},\alpha^{\star}\right)+3\Delta_{n}^{\frac{2-\tau}{\tau-1}}\Lambda_{\star}=A_{n}^{\tau}\left(X_{j\Delta_{n}},\alpha^{\star},\Lambda_{\star}\right)
\end{align*}
because of Lemma \ref{EvalZeta}, $\Delta_{n}=p_{n}^{1-\tau}$, $\Delta_{n}^{\frac{1}{1-\tau}}=p_{n}$ and $\left(\Delta_{n}p_{n}\right)^{-1}=\Delta_{n}^{\frac{2-\tau}{\tau-1}}$. Furthermore, we have the $L^{p}$-boundedness such that
\begin{align*}
	&\mathbf{E}_{\theta^{\star}}\left[\left\|\widehat{A_{j,n}^{\tau}}-
	\frac{3}{2\Delta_{n}}\left[a\left(X_{j\Delta_{n}},\alpha^{\star}\right)\left(\zeta_{j+1,n}+\zeta_{j+2,n}'\right)+
	\left(\Lambda_{\star}\right)^{1/2}(\lm{\varepsilon}{j+1}-\lm{\varepsilon}{j})\right]^{\otimes2}\right\|^{p}
	\right]^{1/p}\\
	&\le \frac{1}{\Delta_{n}}\left|\frac{3}{2}-\frac{1}{m_{n}+m_{n}'}\right|
	\mathbf{E}_{\theta^{\star}}\left[\left\|
	a\left(X_{j\Delta_{n}},\alpha^{\star}\right)\left(\zeta_{j+1,n}+\zeta_{j+2,n}'\right)\right\|^{2p}
	\right]^{1/p}\\
	&\qquad+\frac{2}{\Delta_{n}}\left|\frac{3}{2}-\sqrt{\frac{3}{2}}\sqrt{\frac{1}{m_{n}+m_{n}'}}\right|
	\mathbf{E}_{\theta^{\star}}\left[\left\|a\left(X_{j\Delta_{n}},\alpha^{\star}\right)\left(\zeta_{j+1,n}+\zeta_{j+2,n}'\right)
	(\lm{\varepsilon}{j+1}-\lm{\varepsilon}{j})^{T}\left(\Lambda_{\star}\right)^{1/2}\right\|^{p}
	\right]^{1/p}\\
	&\le C\left(p\right)\left(\frac{3}{2}-\frac{1}{2/3+1/\left(3p_{n}^2\right)}\right)+\frac{C\left(p\right)}{\Delta_{n}^{1/2}}\left(1-\sqrt{\frac{2/3}{2/3+1/\left(3p_{n}^2\right)}}\right)\\
	&\le \frac{C\left(p\right)}{p_{n}^{2}}
	+\frac{C\left(p\right)}{\Delta_{n}^{1/2}p_{n}^{1/2}}
	\left(1-\sqrt{1-\frac{1}{1+2p_{n}^{2}}}\right)\\
	&\le \frac{C\left(p\right)}{p_{n}^{2}}+\frac{C\left(p\right)}{\Delta_{n}^{1/2}p_{n}^{5/2}}\\
	&\le C\left(p\right)\Delta_{n}^2
\end{align*}
because of $\left\|\zeta_{j+1,n}+\zeta_{j+2,n}\right\|_{p}\le C\left(p\right)\Delta_{n}^{1/2}$ and $\left\|\lm{\varepsilon}{j}\right\|_{p}=C\left(p\right)p_{n}^{-1/2}$ for all $j=0,\ldots,k_{n}-1$ and $n\in\mathbf{N}$, and the Taylor expansion for $f\left(x\right)=\sqrt{1+x}$ around $x=0$. The $L^{p}$-boundedness of $R_{1,n}^{\tau\left(1\right)}$ is led by Lemma \ref{ApproxFunctional} and Burkholder's inequality for martingale, and that of $R_{1,n}^{\tau\left(2\right)}$ can be easily obtained by Lemma \ref{MomentLambda}. With respect to $R_{1,n}^{\tau\left(3\right)}$, we decompose as $R_{1,n}^{\tau\left(3\right)}=\sum_{i=0}^{2}R_{i,1,n}^{\tau\left(3\right)}$ where
\begin{align*}
	&R_{i,1,n}^{\tau\left(3\right)}\\
	&= -\frac{1}{2k_{n}^{1/2}}\sum_{1\le 3j+i\le k_{n}-2}\partial_{\alpha}A_{n}^{\tau}\left(\lm{Y}{3j+i-1},\alpha^{\star},\hat{\Lambda}_{n}\right)^{-1}\left[u_{1},\left(\frac{2\Delta_{n}}{3}\right)^{-1}\left(\lm{Y}{3j+i+1}-\lm{Y}{3j+i}\right)^{\otimes 2}-\widehat{A_{3j+i,n}^{\tau}}\right].
\end{align*}
We only evaluate $R_{0,1,n}^{\tau\left(3\right)}$ and for the case $p$ is an even number. The next inequality holds because of the $L^{p}$-boundedness shown above:
\begin{align*}
	&\mathbf{E}_{\theta^{\star}}\left[\left|R_{0,1,n}^{\tau\left(3\right)}\right|^{p}\right]^{1/p}\\
	&=\mathbf{E}_{\theta^{\star}}\left[\left|\frac{1}{2k_{n}^{1/2}}\sum_{1\le 3j\le k_{n}-2}\partial_{\alpha}A_{n}^{\tau}\left(\lm{Y}{3j-1},\alpha^{\star},\hat{\Lambda}_{n}\right)^{-1}\left[u_{1},\left(\frac{2\Delta_{n}}{3}\right)^{-1}\left(\lm{Y}{3j+1}-\lm{Y}{3j}\right)^{\otimes 2}-\widehat{A_{3j,n}^{\tau}}\right]\right|^{p}\right]^{1/p}\\
	&=\frac{3}{4k_{n}^{1/2}\Delta_{n}}\mathbf{E}_{\theta^{\star}}\left[\left|\sum_{1\le 3j\le k_{n}-2}\partial_{\alpha}A_{n}^{\tau}\left(\lm{Y}{3j-1},\alpha^{\star},\hat{\Lambda}_{n}\right)^{-1}\left[u_{1},\left(\lm{Y}{3j+1}-\lm{Y}{3j}\right)^{\otimes 2}-\frac{2\Delta_{n}}{3}\widehat{A_{3j,n}^{\tau}}\right]\right|^{p}\right]^{1/p}\\
	&\le \frac{3}{4k_{n}^{1/2}\Delta_{n}}\mathbf{E}_{\theta^{\star}}\left[\left|\sum_{1\le 3j\le k_{n}-2}\partial_{\alpha}A_{n}^{\tau}\left(\lm{Y}{3j-1},\alpha^{\star},\hat{\Lambda}_{n}\right)^{-1}\left[u_{1},\left(e_{3j,n}+\Delta_{n}b\left(X_{3j\Delta_{n}}\right)\right)^{\otimes 2}\right]\right|^{p}\right]^{1/p}\\
	&\quad+\frac{3}{2k_{n}^{1/2}\Delta_{n}}\mathbf{E}_{\theta^{\star}}\left[\left|\sum_{1\le 3j\le k_{n}-2}\partial_{\alpha}A_{n}^{\tau}\left(\lm{Y}{3j-1},\alpha^{\star},\hat{\Lambda}_{n}\right)^{-1}\right.\right.\\
	&\hspace{4cm}\left.\left.\left[u_{1},e_{3j,n}\left(a\left(X_{3j\Delta_{n}},\alpha^{\star}\right)\left(\zeta_{3j+1,n}+\zeta_{3j+2,n}'\right)\right)^{T}\right]\right|^{p}\right]^{1/p}\\
	&\quad+\frac{3}{2k_{n}^{1/2}\Delta_{n}}\mathbf{E}_{\theta^{\star}}\left[\left|\sum_{1\le 3j\le k_{n}-2}\partial_{\alpha}A_{n}^{\tau}\left(\lm{Y}{3j-1},\alpha^{\star},\hat{\Lambda}_{n}\right)^{-1}\right.\right.\\
	&\hspace{4cm}\left.\left.\left[u_{1},\Delta_{n}b\left(X_{3j\Delta_{n}}\right)\left(a\left(X_{3j\Delta_{n}},\alpha^{\star}\right)\left(\zeta_{3j+1,n}+\zeta_{3j+2,n}'\right)\right)^{T}\right]\right|^{p}\right]^{1/p}\\
	&\quad+\frac{3}{2k_{n}^{1/2}\Delta_{n}}\mathbf{E}_{\theta^{\star}}\left[\left|\sum_{1\le 3j\le k_{n}-2}\partial_{\alpha}A_{n}^{\tau}\left(\lm{Y}{3j-1},\alpha^{\star},\hat{\Lambda}_{n}\right)^{-1}\right.\right.\\
	&\hspace{4cm}\left.\left.\left[u_{1},\left(e_{3j,n}+\Delta_{n}b\left(X_{3j\Delta_{n}}\right)\right)\left(\left(\Lambda_{\star}\right)^{1/2}(\lm{\varepsilon}{3j+1}-\lm{\varepsilon}{3j})\right)^{T}\right]\right|^{p}\right]^{1/p}\\
	&\quad+o\left(1\right).
\end{align*}
We easily obtain the evaluation for the first term in the right hand side
\begin{align*}
&\frac{3}{4k_{n}^{1/2}\Delta_{n}}\mathbf{E}_{\theta^{\star}}\left[\left|\sum_{1\le 3j\le k_{n}-2}\partial_{\alpha}A_{n}^{\tau}\left(\lm{Y}{3j-1},\alpha^{\star},\hat{\Lambda}_{n}\right)^{-1}\left[u_{1},\left(e_{3j,n}+\Delta_{n}b\left(X_{3j\Delta_{n}}\right)\right)^{\otimes 2}\right]\right|^{p}\right]^{1/p}\\
&\le C\left(p\right)\left|u\right|k_{n}^{1/2}\Delta_{n}\to 0,
\end{align*}
and that for the second term
\begin{align*}
&\frac{3}{2k_{n}^{1/2}\Delta_{n}}\mathbf{E}_{\theta^{\star}}\left[\left|\sum_{1\le 3j\le k_{n}-2}\partial_{\alpha}A_{n}^{\tau}\left(\lm{Y}{3j-1},\alpha^{\star},\hat{\Lambda}_{n}\right)^{-1}\right.\right.\\
&\hspace{3cm}\left.\left.\left[u_{1},e_{3j,n}\left(a\left(X_{3j\Delta_{n}},\alpha^{\star}\right)\left(\zeta_{3j+1,n}+\zeta_{3j+2,n}'\right)\right)^{T}\right]\right|^{p}\right]^{1/p}\\
&=\frac{3}{2k_{n}^{1/2}\Delta_{n}}\mathbf{E}_{\theta^{\star}}\left[\left|\sum_{1\le 3j\le k_{n}-2}\partial_{\alpha}A_{n}^{\tau}\left(\lm{Y}{3j-1},\alpha^{\star},\Lambda_{\star}\right)^{-1}\right.\right.\\
&\hspace{3cm}\left.\left.\left[u_{1},e_{3j,n}\left(a\left(X_{3j\Delta_{n}},\alpha^{\star}\right)\left(\zeta_{3j+1,n}+\zeta_{3j+2,n}'\right)\right)^{T}\right]\right|^{p}\right]^{1/p}+o\left(1\right)\\
&\le C\left(p\right)\left|u\right|k_{n}^{1/2}\Delta_{n}\to 0,
\end{align*}
because of Lemma \ref{ExpansionLM} and Lemma \ref{MomentLambda}. For the third term, we can replace $\hat{\Lambda}_{n}$ with $\Lambda_{\star}$ and $\lm{Y}{3j-1}$ with $X_{3j\Delta_{n}}$ because of Lemma \ref{MomentLambda} and the result from combining Lemma \ref{EvalZeta} and Proposition 12 in \cite{Nakakita-Uchida-2018a}, we denote
\begin{align*}
	\eta_{3j,n}\left(u_{1}\right)
	=\left(a\left(X_{3j\Delta_{n}}\right)\right)^{T}\left(\partial_{\alpha}A_{n}^{\tau}\left(X_{3j\Delta_{n}},
	\alpha^{\star},\Lambda_{\star}\right)\left[u_{1}\right]\right)
	b\left(X_{3j\Delta_{n}}\right)
\end{align*}
which is a $\mathcal{H}_{3j}^{n}$-measurable random variable. Because of Lemma \ref{EvalZeta} and BDG-inequality, we have
\begin{align*}
	&\frac{3}{2k_{n}^{1/2}}\mathbf{E}_{\theta^{\star}}\left[\left|\sum_{1\le 3j\le k_{n}-2}\eta_{3j,n}\left(u_{1}\right)\left[\zeta_{3j+1,n}+\zeta_{3j+2,n}'\right]\right|^{p}\right]^{1/p}\\
	&\le\frac{C\left(p\right)}{k_{n}^{1/2}}\mathbf{E}_{\theta^{\star}}\left[\left(\int_{0}^{k_{n}\Delta_{n}}\sum_{1\le 3j\le k_{n}-2}\left\|\eta_{3j,n}\left(u_{1}\right)\right\|^{2}
	\mathbf{1}_{\left[3j\Delta_{n},\left(3j+1\right)\Delta_{n}\right]}\left(s\right)\mathrm{d}s\right)^{p/2}\right]^{1/p}\\
	&\le \frac{C\left(p\right)}{k_{n}^{1/2}}\mathbf{E}_{\theta^{\star}}\left[\left(\int_{0}^{k_{n}\Delta_{n}}\sum_{1\le 3j\le k_{n}-2}\left\|\eta_{3j,n}\left(u_{1}\right)\right\|^{p}
	\mathbf{1}_{\left[3j\Delta_{n},\left(3j+1\right)\Delta_{n}\right]}\left(s\right)\mathrm{d}s\right)\left(\int_{0}^{k_{n}\Delta_{n}}\mathrm{d}s\right)^{p/2-1}\right]^{1/p}\\
	&= \frac{C\left(p\right)\left(k_{n}\Delta_{n}\right)^{1/2-1/p}}{k_{n}^{1/2}}\left(\int_{0}^{k_{n}\Delta_{n}}\sum_{1\le 3j\le k_{n}-2}\mathbf{E}_{\theta^{\star}}\left[\left\|\eta_{3j,n}\left(u_{1}\right)\right\|^{p}
	\right]\mathbf{1}_{\left[3j\Delta_{n},\left(3j+1\right)\Delta_{n}\right]}\left(s\right)\mathrm{d}s\right)^{1/p}\\
	&\le \frac{C\left(p\right)\left(k_{n}\Delta_{n}\right)^{1/2}}{k_{n}^{1/2}}\left|u\right|\\
	&\le C\left(p\right)\Delta_{n}^{1/2}\\
	&\to 0.
\end{align*}
It is obvious that the fourth term can be evaluated as bounded because $\left\{\varepsilon_{ih_{n}}\right\}$ is independent of $X$ and i.i.d. Therefore, we obtain $\left\|R_{0,1,n}^{\tau\left(3\right)}\right\|_{p}<\infty $ and $\left\|R_{1,n}^{\tau\left(3\right)}\right\|_{p}<\infty$.

With respect to $M_{1,n}^{\tau}$, we utilise Burkholder's inequality for martingale: let us define $M_{i,1,n}^{\tau}$ for $i=0,1,2$ as same as $R_{i,1,n}^{\tau\left(3\right)}$ and then
\begin{align*}
	&\mathbf{E}_{\theta^{\star}}\left[\left|M_{0,1,n}^{\tau}\right|^{p}\right]\\
	&\le C\left(p\right)\mathbf{E}_{\theta^{\star}}\left[\left|\frac{1}{4k_{n}}\sum_{1\le 3j\le k_{n}-2}\left|
	\partial_{\alpha}A_{n}^{\tau}\left(X_{j\Delta_{n}},\alpha^{\star},\Lambda_{\star}\right)^{-1}\left[u_{1},\widehat{A_{j,n}^{\tau}}\right]\right.\right.\right.\\
	&\hspace{4cm}\left.\left.\left.+\partial_{\alpha}\log\det A_{n}^{\tau}\left(X_{j\Delta_{n}},\alpha^{\star},\Lambda_{\star}\right)\left[u_{1}\right]\right|^{2}\right|^{p/2}\right]\\
	&\le \frac{C\left(p\right)}{k_{n}}\sum_{1\le 3j\le k_{n}-2}\mathbf{E}_{\theta^{\star}}\left[\left|
	\partial_{\alpha}A_{n}^{\tau}\left(X_{j\Delta_{n}},\alpha^{\star},\Lambda_{\star}\right)^{-1}\left[u_{1},\widehat{A_{j,n}^{\tau}}\right]+\partial_{\alpha}\log\det A_{n}^{\tau}\left(X_{j\Delta_{n}},\alpha^{\star},\Lambda_{\star}\right)\left[u_{1}\right]\right|^{p}\right]\\
	&<\infty
\end{align*}
because of the integrability.

In the next place, we give the proof for (b).
Let us denote
\begin{align*}
\mathbb{Y}_{1,n}^{\tau\left(\dagger\right)}\left(\alpha;\vartheta^{\star}\right)	&=-\frac{1}{2k_{n}}\sum_{j=1}^{k_{n}-2}
\left(\left(A_{n}^{\tau}\left(\lm{Y}{j-1},\alpha,\hat{\Lambda}_{n}\right)^{-1}-A_{n}^{\tau}\left(\lm{Y}{j-1},\alpha^{\star},\hat{\Lambda}_{n}\right)^{-1}\right)\left[A_{n}^{\tau}\left(X_{j\Delta_{n}},\alpha^{\star},\Lambda_{\star}\right)\right]\right.\\
&\hspace{3cm}\left.+\log\frac{\det A_{n}^{\tau}\left(\lm{Y}{j-1},\alpha,\hat{\Lambda}_{n}\right)}{\det A_{n}^{\tau}\left(\lm{Y}{j-1},\alpha^{\star},\hat{\Lambda}_{n}\right)}\right).
\end{align*}
Define $R_{1,n}^{\tau\left(\dagger\right)}$ by
\begin{align*}
	R_{1,n}^{\tau\left(\dagger\right)}=\mathbb{Y}_{1,n}^{\tau}\left(\alpha;\vartheta^{\star}\right)-\mathbb{Y}_{1,n}^{\tau\left(\dagger\right)}\left(\alpha;\vartheta^{\star}\right)-M_{1,n}^{\tau\left(\dagger\right)}
\end{align*}
for
\begin{align*}
M_{1,n}^{\tau\left(\dagger\right)}&=-\frac{1}{2k_{n}}\sum_{j=1}^{k_{n}-2}
\left(\left(A_{n}^{\tau}\left(\lm{Y}{j-1},\alpha,\hat{\Lambda}_{n}\right)^{-1}-A_{n}^{\tau}\left(\lm{Y}{j-1},\alpha^{\star},\hat{\Lambda}_{n}\right)^{-1}\right)\left[\widehat{A_{j,n}^{\tau}}-A_{n}^{\tau}\left(X_{j\Delta_{n}},\alpha^{\star},\Lambda_{\star}\right)\right]\right).
\end{align*}
Firstly we show $L^p$-boundedness of $k_{n}^{\epsilon_{1}}R_{1,n}^{\tau\left(\dagger\right)}$ uniformly for $n$ and $\alpha$ for every $p$. We have the representation such that
\begin{align*}
	&R_{1,n}^{\tau\left(\dagger\right)}\\
	&=-\frac{1}{2k_{n}}\sum_{j=1}^{k_{n}-2}
	\left(\left(A_{n}^{\tau}\left(\lm{Y}{j-1},\alpha,\hat{\Lambda}_{n}\right)^{-1}-A_{n}^{\tau}\left(\lm{Y}{j-1},\alpha^{\star},\hat{\Lambda}_{n}\right)^{-1}\right)\left[\left(\lm{Y}{j+1}-\lm{Y}{j}\right)^{\otimes 2}\right]\left(\frac{2}{3}\Delta_{n}\right)^{-1}\right.\\
	&\hspace{3cm}\left.+\log\frac{\det A_{n}^{\tau}\left(\lm{Y}{j-1},\alpha,\hat{\Lambda}_{n}\right)}{\det A_{n}^{\tau}\left(\lm{Y}{j-1},\alpha^{\star},\hat{\Lambda}_{n}\right)}\right)\\
	&\qquad+\frac{1}{2k_{n}}\sum_{j=1}^{k_{n}-2}
	\left(\left(A_{n}^{\tau}\left(\lm{Y}{j-1},\alpha,\hat{\Lambda}_{n}\right)^{-1}-A_{n}^{\tau}\left(\lm{Y}{j-1},\alpha^{\star},\hat{\Lambda}_{n}\right)^{-1}\right)\left[A_{n}^{\tau}\left(X_{j\Delta_{n}},\alpha^{\star},\Lambda_{\star}\right)\right]\right.\\
	&\hspace{3cm}\left.+\log\frac{\det A_{n}^{\tau}\left(\lm{Y}{j-1},\alpha,\hat{\Lambda}_{n}\right)}{\det A_{n}^{\tau}\left(\lm{Y}{j-1},\alpha^{\star},\hat{\Lambda}_{n}\right)}\right)\\
	&\qquad+\frac{1}{2k_{n}}\sum_{j=1}^{k_{n}-2}
	\left(\left(A_{n}^{\tau}\left(\lm{Y}{j-1},\alpha,\hat{\Lambda}_{n}\right)^{-1}-A_{n}^{\tau}\left(\lm{Y}{j-1},\alpha^{\star},\hat{\Lambda}_{n}\right)^{-1}\right)\left[\widehat{A_{j,n}^{\tau}}-A_{n}^{\tau}\left(X_{j\Delta_{n}},\alpha^{\star},\Lambda_{\star}\right)\right]\right)\\
	&=-\frac{1}{2k_{n}}\sum_{j=1}^{k_{n}-2}
	\left(\left(A_{n}^{\tau}\left(\lm{Y}{j-1},\alpha,\hat{\Lambda}_{n}\right)^{-1}-A_{n}^{\tau}\left(\lm{Y}{j-1},\alpha^{\star},\hat{\Lambda}_{n}\right)^{-1}\right)\left[\left(\lm{Y}{j+1}-\lm{Y}{j}\right)^{\otimes 2}\right]\left(\frac{2}{3}\Delta_{n}\right)^{-1}\right)\\
	&\qquad+\frac{1}{2k_{n}}\sum_{j=1}^{k_{n}-2}
	\left(\left(A_{n}^{\tau}\left(\lm{Y}{j-1},\alpha,\hat{\Lambda}_{n}\right)^{-1}-A_{n}^{\tau}\left(\lm{Y}{j-1},\alpha^{\star},\hat{\Lambda}_{n}\right)^{-1}\right)\left[A_{n}^{\tau}\left(X_{j\Delta_{n}},\alpha^{\star},\Lambda_{\star}\right)\right]\right)\\
	&\qquad+\frac{1}{2k_{n}}\sum_{j=1}^{k_{n}-2}
	\left(\left(A_{n}^{\tau}\left(\lm{Y}{j-1},\alpha,\hat{\Lambda}_{n}\right)^{-1}-A_{n}^{\tau}\left(\lm{Y}{j-1},\alpha^{\star},\hat{\Lambda}_{n}\right)^{-1}\right)\left[\widehat{A_{j,n}^{\tau}}-A_{n}^{\tau}\left(X_{j\Delta_{n}},\alpha^{\star},\Lambda_{\star}\right)\right]\right)\\
	&=-\frac{1}{2k_{n}}\sum_{j=1}^{k_{n}-2}
	\left(A_{n}^{\tau}\left(\lm{Y}{j-1},\alpha,\hat{\Lambda}_{n}\right)^{-1}-A_{n}^{\tau}\left(\lm{Y}{j-1},\alpha^{\star},\hat{\Lambda}_{n}\right)^{-1}\right)\left[\left(\frac{2}{3}\Delta_{n}\right)^{-1}\left(\lm{Y}{j+1}-\lm{Y}{j}\right)^{\otimes 2}-\widehat{A_{j,n}^{\tau}}\right].
\end{align*}
Because of Lemma \ref{ExpansionLM}, the following evaluation holds:
\begin{align*}
	&\left\|\left(\frac{2}{3}\Delta_{n}\right)^{-1}\left(\lm{Y}{j+1}-\lm{Y}{j}\right)^{\otimes 2}-\widehat{A_{j,n}^{\tau}}\right\|_{p}\\
	&\le \left\|\left(\frac{2}{3}\Delta_{n}\right)^{-1}\left[\left(\lm{Y}{j+1}-\lm{Y}{j}\right)^{\otimes 2}-\left(a\left(X_{j\Delta_{n}},\alpha^{\star}\right)\left(\zeta_{j+1,n}+\zeta_{j+2,n}'\right)+\left(\Lambda_{\star}\right)^{1/2}(\lm{\varepsilon}{j+1}-\lm{\varepsilon}{j})\right)^{\otimes2}\right]\right\|_{p}\\
	&\qquad +C\left(p\right)\Delta_{n}\\
	&=\left(\frac{2}{3}\Delta_{n}\right)^{-1}\left\|e_{j,n}^{\otimes2}-e_{j,n}\left(a\left(X_{j\Delta_{n}},\alpha^{\star}\right)\left(\zeta_{j+1,n}+\zeta_{j+2,n}'\right)+\left(\Lambda_{\star}\right)^{1/2}(\lm{\varepsilon}{j+1}-\lm{\varepsilon}{j})\right)^{T}\right.\\
	&\hspace{4cm} \left.- \left(a\left(X_{j\Delta_{n}},\alpha^{\star}\right)\left(\zeta_{j+1,n}+\zeta_{j+2,n}'\right)+\left(\Lambda_{\star}\right)^{1/2}(\lm{\varepsilon}{j+1}-\lm{\varepsilon}{j})\right)
	e_{j,n}^{T}\right\|_{p}\\
	&\qquad +C\left(p\right)\Delta_{n}\\
	&\le \left(\frac{2}{3}\Delta_{n}\right)^{-1}\left(\left\|e_{j,n}\right\|_{p}^{2}+2\left\|a\left(X_{j\Delta_{n}},\alpha^{\star}\right)\left(\zeta_{j+1,n}+\zeta_{j+2,n}'\right)+\left(\Lambda_{\star}\right)^{1/2}(\lm{\varepsilon}{j+1}-\lm{\varepsilon}{j})\right\|_{2p}
	\left\|e_{j,n}\right\|_{2p}\right)\\
	&\qquad +C\left(p\right)\Delta_{n}\\
	&\le C\left(p\right)\left(\Delta_{n}+\Delta_{n}^{1/2}\right).
\end{align*}
Hence, we have the evaluation
\begin{align*}
	\sup_{\alpha\in\Theta_{1}}\sup_{n\in\mathbf{N}}\left\|R_{1,n}^{\tau\left(\dagger\right)}\right\|_{p}
	\le C\left(p\right)\Delta_{n}+C\left(p\right)\Delta_{n}^{1/2}\le C\Delta_{n}^{1/2},
\end{align*}
and hence
\begin{align*}
	\sup_{\alpha\in\Theta_{1}}\sup_{n\in\mathbf{N}}\left\|k_{n}^{\epsilon_{1}}R_{1,n}^{\tau\left(\dagger\right)}\right\|_{p}
	\le C\left(p\right)k_{n}^{\epsilon_{1}}\Delta_{n}^{1/2} = C\left(p\right)\left(k_{n}^{\epsilon_{0}}\Delta_{n}\right)^{1/2}\to 0.
\end{align*}
In the next place, we see the same uniform $L^{p}$-boundedness of $k_{n}^{\epsilon_{1}}M_{1,n}^{\tau\left(\dagger\right)}$ for every $p$. As the approximation, we set $M_{1,n}^{\tau\left(\ddagger\right)}:=\sum_{i=0}^{2}M_{i,1,n}^{\tau\left(\ddagger\right)}$ where for $i=0,1,2$,
\begin{align*}
M_{i,1,n}^{\tau\left(\ddagger\right)}&:=-\frac{1}{2k_{n}}\sum_{1\le 3j+i\le k_{n}-2}\mu_{3j+i,n},
\end{align*}
where
\begin{align*}
	\mu_{3j+i,n}=\left(A_{n}^{\tau}\left(\lm{Y}{3j+i-1},\alpha,\Lambda_{\star}\right)^{-1}-A_{n}^{\tau}\left(\lm{Y}{3j+i-1},\alpha^{\star},\Lambda_{\star}\right)^{-1}\right)
	\left[\widehat{A_{3j+i,n}^{\tau}}-A_{n}^{\tau}\left(X_{\left(3j+i\right)\Delta_{n}},\alpha^{\star},\Lambda_{\star}\right)\right].
\end{align*}
It is easy to show $\mathbf{E}\left[\sup_{\alpha\in\Theta_{1}}k_{n}^{\epsilon_{1}}\left|M_{1,n}^{\tau\left(\dagger\right)}-M_{1,n}^{\tau\left(\ddagger\right)}\right|^{p}\right]^{1/p}\le C\left(p\right)k_{n}^{\epsilon_{1}}n^{-1/2}\le C\left(p\right)h_{n}^{1/2}\to0$ for Lemma \ref{MomentLambda}. For simplicity, we only evaluate $k_{n}^{\epsilon_{1}}M_{0,1,n}^{\tau\left(\ddagger\right)}$. We have for all $p$,
\begin{align*}
&\mathbf{E}_{\theta^{\star}}\left[\left|\mu_{3j,n}\right|^{p}\right]\\
&=\mathbf{E}_{\theta^{\star}}\left[\left|\left(A_{n}^{\tau}\left(\lm{Y}{3j-1},\alpha,\Lambda_{\star}\right)^{-1}-A_{n}^{\tau}\left(\lm{Y}{3j-1},\alpha^{\star},\Lambda_{\star}\right)^{-1}\right)\left[\widehat{A_{3j,n}^{\tau}}-A_{n}^{\tau}\left(X_{3j\Delta_{n}},\alpha^{\star},\Lambda_{\star}\right)\right]\right|^{p}\right]\\
&\le \mathbf{E}_{\theta^{\star}}\left[\left\|A_{n}^{\tau}\left(\lm{Y}{3j-1},\alpha,\Lambda_{\star}\right)^{-1}-A_{n}^{\tau}\left(\lm{Y}{3j-1},\alpha^{\star},\Lambda_{\star}\right)^{-1}\right\|^{p}\left\|\widehat{A_{3j,n}^{\tau}}-A_{n}^{\tau}\left(X_{3j\Delta_{n}},\alpha^{\star},\Lambda_{\star}\right)\right\|^{p}\right]\\
&\le C\left(p\right)\mathbf{E}_{\theta^{\star}}\left[\left\|\widehat{A_{3j,n}^{\tau}}-A_{n}^{\tau}\left(X_{3j\Delta_{n}},\alpha^{\star},\Lambda_{\star}\right)\right\|^{2p}\right]^{1/2}\\
&\le C\left(p\right).
\end{align*}
Hence by Burkholder's inequality, for all $p$,
\begin{align*}
	\mathbf{E}_{\theta^{\star}}\left[\left|k_{n}^{\epsilon_{1}}M_{0,1,n}^{\tau\left(\ddagger\right)}\right|^{p}\right]
	&\le C\left(p\right)k_{n}^{\epsilon_{1}p}\mathbf{E}_{\theta^{\star}}\left[\left|\frac{1}{k_{n}^{2}}\sum_{1\le 3j\le k_{n}-2}\mu_{3j,n}^{2}\right|^{p/2}\right]\\
	&\le C\left(p\right)k_{n}^{\epsilon_{1}p}k_{n}^{-p/2}\frac{1}{k_{n}}\sum_{1\le 3j\le k_{n}-2}\mathbf{E}_{\theta^{\star}}\left[\left|\mu_{3j,n}^{2}\right|^{p/2}\right]\\
	&\le C\left(p\right)k_{n}^{\left(\epsilon_{1}-1/2\right)p}\frac{1}{k_{n}}\sum_{1\le 3j\le k_{n}-2}\mathbf{E}_{\theta^{\star}}\left[\left|\mu_{3j,n}\right|^{p}\right]\\
	&\le C\left(p\right)k_{n}^{\left(\epsilon_{1}-1/2\right)p}
\end{align*}
and then $\sup_{n,\theta^{\star}}\left\|k_{n}^{\epsilon_{1}}M_{1,n}^{\tau\left(\ddagger\right)}\right\|_{p}<\infty$. With the same procedure, we obtain the uniform $L^p$-boundedness of $k_{n}^{\epsilon_{1}}\partial_{\alpha}R_{1,n}^{\tau\left(\dagger\right)}$ and $k_{n}^{\epsilon_{1}}\partial_{\alpha}M_{1,n}^{\tau\left(\ddagger\right)}$. Sobolev's inequality leads to
\begin{align*}
	\sup_{n\in\mathbf{N}}\left\|\sup_{\alpha\in\Theta_{1}}\left|k_{n}^{\epsilon_{1}}R_{1,n}^{\tau\left(\dagger\right)}\right|\right\|_{p}<\infty,\ \sup_{n\in\mathbf{N}}\left\|\sup_{\alpha\in\Theta_{1}}\left|k_{n}^{\epsilon_{1}}M_{1,n}^{\tau\left(\ddagger\right)}\right|\right\|_{p}<\infty
\end{align*}
and then $\sup_{n\in\mathbf{N}}\left\|\sup_{\alpha\in\Theta_{1}}\left|k_{n}^{\epsilon_{1}}M_{1,n}^{\tau\left(\dagger\right)}\right|\right\|_{p}<\infty$.
Note that for
\begin{align*}
	\mathbb{Y}_{1,n}^{\tau\left(\ddagger\right)}\left(\alpha;\vartheta^{\star}\right)	&=-\frac{1}{2k_{n}}\sum_{j=1}^{k_{n}-2}
	\left(\left(A_{n}^{\tau}\left(X_{j\Delta_{n}},\alpha,\Lambda_{\star}\right)^{-1}-A_{n}^{\tau}\left(X_{j\Delta_{n}},\alpha^{\star},\Lambda_{\star}\right)^{-1}\right)\left[A_{n}^{\tau}\left(X_{j\Delta_{n}},\alpha^{\star},\Lambda_{\star}\right)\right]\right.\\
	&\hspace{3cm}\left.+\log\frac{\det A_{n}^{\tau}\left(X_{j\Delta_{n}},\alpha,\Lambda_{\star}\right)}{\det A_{n}^{\tau}\left(X_{j\Delta_{n}},\alpha^{\star},\Lambda_{\star}\right)}\right),
\end{align*}
we can evaluate $\sup_{n\in\mathbf{N}}\left\|\sup_{\alpha\in\Theta_{1}}\left|k_{n}^{\epsilon_{1}}\left(\mathbb{Y}_{1,n}^{\tau\left(\ddagger\right)}\left(\alpha;\vartheta^{\star}\right)\left(\alpha;\vartheta^{\star}\right)-\mathbb{Y}_{1,n}^{\tau\left(\dagger\right)}\left(\alpha;\vartheta^{\star}\right)\left(\alpha;\vartheta^{\star}\right)\right)\right|\right\|_{p}<\infty$ because of Lemma \ref{ApproxSumLM} and Lemma \ref{EvalFuncLambda}. Hence the discussion of Remark \ref{RemarkCentred} leads to the proof.
\end{proof}

\begin{lemma}\label{alphaInfo}
	\begin{enumerate}
	\item[(a)]For any $M_{3}>0$,
	\begin{align*}
		\sup_{n\in\mathbf{N}}\mathbf{E}_{\theta^{\star}}\left[\left(k_{n}^{-1}\sup_{\vartheta\in\Xi}
		\left|\partial_{\alpha}^{3}\mathbb{H}_{1,n}^{\tau}\left(\alpha;\Lambda\right)\right|\right)^{M_{3}}\right]<\infty.
	\end{align*}
	
	\item[(b)]Let $\epsilon_{1}=\epsilon_{0}/2$. Then for $M_{4}>0$,
	\begin{align*}
	\sup_{n\in\mathbf{N}}\mathbf{E}_{\theta^{\star}}\left[\left(k_{n}^{\epsilon_{1}}\left|\Gamma_{1,n}^{\tau}\left(\alpha^{\star};\vartheta^{\star}\right)-\Gamma_{1}^{\tau}\left(\vartheta^{\star}\right)\right|\right)^{M_{4}}\right]<\infty.
	\end{align*}
	\end{enumerate}
\end{lemma}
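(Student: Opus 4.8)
For part (a) the plan is to differentiate the sum termwise and reduce to a uniform per-term estimate. Since larger exponents dominate by Jensen's inequality, I may assume $M_3\ge 1$. Writing $\partial_\alpha^3\mathbb{H}_{1,n}^\tau=\sum_{j=1}^{k_n-2}\xi_{j,n}$ with $\xi_{j,n}$ the third $\alpha$-derivative of the $j$-th summand, Minkowski's inequality gives
\[
\bigl\|k_n^{-1}\sup_{\vartheta}\bigl|\partial_\alpha^3\mathbb{H}_{1,n}^\tau\bigr|\bigr\|_{M_3}\le k_n^{-1}\sum_{j=1}^{k_n-2}\bigl\|\sup_{\vartheta}|\xi_{j,n}|\bigr\|_{M_3},
\]
so it suffices to bound $\|\sup_\vartheta|\xi_{j,n}|\|_{M_3}$ by a constant uniform in $j,n$. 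Each $\xi_{j,n}$ is a finite sum of products of the derivatives $\partial_\alpha^i A_n^\tau(\lm{Y}{j-1},\alpha,\Lambda)^{-1}$ and $\partial_\alpha^i\log\det A_n^\tau(\lm{Y}{j-1},\alpha,\Lambda)$ ($i\le 3$) with the factor $(\tfrac{2}{3}\Delta_n)^{-1}(\lm{Y}{j+1}-\lm{Y}{j})^{\otimes 2}$. The matrices $A_n^\tau$ are uniformly non-degenerate by [A1]-(i) (adding a positive semi-definite term only improves positivity), and [A2] makes all $\partial_\alpha^i A$ of polynomial growth; hence the coefficient factors are polynomial-growth functions of $\lm{Y}{j-1}$ with moments bounded uniformly in $j,n$ by Lemma \ref{ApproxLM} together with [A1]-(iii) and Lemma \ref{MomentLambda}(a). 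For the data factor, Lemma \ref{ExpansionLM} yields $\|\lm{Y}{j+1}-\lm{Y}{j}\|_{2p}\le C(p)(\Delta_n^{1/2}+p_n^{-1/2})\le C(p)\Delta_n^{1/2}$ (using $p_n^{-1/2}\le\Delta_n^{1/2}$ for $\tau\le2$), whence $\|(\tfrac{2}{3}\Delta_n)^{-1}(\lm{Y}{j+1}-\lm{Y}{j})^{\otimes 2}\|_{p}\le C(p)$ uniformly. Hölder's inequality then delivers the per-term bound, and the passage from $\sup_\vartheta$ to an $L^p$-quantity is supplied by the assumed Sobolev embedding on $\Theta_\varepsilon\times\Theta_1$, costing one extra $\vartheta$-derivative that is still covered by [A2] (derivatives up to order four in $\alpha$).

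For part (b) the plan is to write $\Gamma_{1,n}^\tau(\alpha^\star;\vartheta^\star)-\Gamma_1^\tau(\vartheta^\star)$ as a telescoping chain of replacements and to show that each, once multiplied by $k_n^{\epsilon_1}$, is $L^{M_4}$-negligible. The successive replacements are: (1) replace $\hat{\Lambda}_n$ by $\Lambda_\star$ inside $A_n^\tau$; (2) replace $(\tfrac{2\Delta_n}{3})^{-1}(\lm{Y}{j+1}-\lm{Y}{j})^{\otimes 2}$ by $\widehat{A_{j,n}^\tau}$; (3) replace $\widehat{A_{j,n}^\tau}$ by its conditional mean $\mathbf{E}_{\theta^\star}[\widehat{A_{j,n}^\tau}\mid\mathcal{H}_j^n]=A_n^\tau(X_{j\Delta_n},\alpha^\star,\Lambda_\star)$; (4) replace $\lm{Y}{j-1}$ by $X_{j\Delta_n}$ in the smooth coefficients; (5) replace $A_n^\tau$ by $A^\tau$, i.e.\ drop the term $3\Delta_n^{(2-\tau)/(\tau-1)}\Lambda_\star=3(p_n\Delta_n)^{-1}\Lambda_\star$ (which equals $3\Lambda_\star$ exactly when $\tau=2$ and is $O((p_n\Delta_n)^{-1})$ when $\tau<2$). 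What remains after these steps is $\tfrac{1}{k_n}\sum_{j=1}^{k_n-2}G(X_{j\Delta_n})-\nu(G)$ for a polynomial-growth functional $G$ built from $\partial_\alpha^2 A^\tau(\cdot,\alpha,\Lambda_\star)^{-1}$ and $\partial_\alpha^2\log\det A^\tau$ evaluated at $\alpha=\alpha^\star$, whose $\nu$-integral is exactly $\Gamma_1^\tau(\vartheta^\star)$.

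The $k_n^{\epsilon_1}$-scaled errors are then controlled as follows. Step (1) is bounded directly by Lemma \ref{EvalFuncLambda}, of order $C(p)(n^{1/2}h_n^{3/2}+p_n^{-1/2})\to0$ under [A6]. Step (2) uses the estimate $\|(\tfrac{2}{3}\Delta_n)^{-1}(\lm{Y}{j+1}-\lm{Y}{j})^{\otimes 2}-\widehat{A_{j,n}^\tau}\|_{p}\le C(p)\Delta_n^{1/2}$ already derived inside the proof of Lemma \ref{alphaScore}; after normalising by $k_n^{-1}$ and inflating by $k_n^{\epsilon_1}$ this leaves $k_n^{\epsilon_1}\Delta_n^{1/2}=(k_n^{\epsilon_0}\Delta_n)^{1/2}$, and $k_n^{\epsilon_0}\Delta_n\le T_n\Delta_n=k_n\Delta_n^2\to0$ by [A6] (using $nh_n\ge k_n^{\epsilon_0}$ and $k_n\Delta_n=T_n$). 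Step (3) is a martingale difference with respect to $\{\mathcal{H}_j^n\}$, so Burkholder's inequality gives order $k_n^{-1/2}$ and $k_n^{\epsilon_1-1/2}\to0$ since $\epsilon_1=\epsilon_0/2\le 1/2$. Step (4) is Lemma \ref{ApproxSumLM}, again of order $\Delta_n^{1/2}$ and hence negligible after scaling. Step (5) is a deterministic factor $O((p_n\Delta_n)^{-1})$ that is null at $\tau=2$ and for $\tau<2$ satisfies $k_n^{\epsilon_1}(p_n\Delta_n)^{-1}\to0$ under [A6] (taking $\epsilon_0$ suitably small). Finally, the ergodic discrepancy $\tfrac{1}{k_n}\sum_j G(X_{j\Delta_n})-\nu(G)$ scaled by $k_n^{\epsilon_1}$ has bounded $M_4$-th moment precisely by Remark \ref{RemarkCentred}, which furnishes the matching rate $k_n^{-\epsilon_1}$; assembling the five negligible errors with this bounded ergodic term gives the claim.

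The routine part is the bookkeeping of the replacements; the genuine obstacle is twofold. First, one must verify that every deterministic rate (Steps 2, 4, 5) and the martingale rate (Step 3), once inflated by $k_n^{\epsilon_1}$, still tends to zero under the delicate balance in [A6]: the identities $k_n\Delta_n=T_n$ and $k_n^{\epsilon_0}\le T_n$ are exactly what make $k_n^{\epsilon_0}\Delta_n\le k_n\Delta_n^2\to0$, while $\epsilon_1\le 1/2$ is what tames the martingale term. Second, Steps (2)--(3) hinge on the identification $\mathbf{E}_{\theta^\star}[\widehat{A_{j,n}^\tau}\mid\mathcal{H}_j^n]=A_n^\tau(X_{j\Delta_n},\alpha^\star,\Lambda_\star)$ coming from Lemma \ref{EvalZeta}, which is the analytic heart linking the observed local-mean increments to the intended quadratic characteristic; everything else follows from the moment lemmas already established.
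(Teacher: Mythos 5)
Your proposal follows essentially the same route as the paper's proof: part (a) via a termwise polynomial-growth majorant combined with the uniform moment bound $\Delta_{n}^{-1}\left\|\left(\lm{Y}{j+1}-\lm{Y}{j}\right)^{\otimes 2}\right\|_{p}\le C\left(p\right)$ (the paper gets the supremum over $\vartheta$ from a pointwise majorant that is uniform in $\vartheta$, so your Sobolev detour is valid but not needed), and part (b) via exactly the telescoping chain that the paper executes inside the proof of Lemma \ref{alphaScore}(b) and then invokes for Lemma \ref{alphaInfo}(b), with the same key ingredients (Lemma \ref{EvalZeta} through the identity $\mathbf{E}_{\theta^{\star}}[\widehat{A_{j,n}^{\tau}}\,|\,\mathcal{H}_{j}^{n}]=A_{n}^{\tau}(X_{j\Delta_{n}},\alpha^{\star},\Lambda_{\star})$, Lemmas \ref{ExpansionLM}, \ref{MomentLambda}, \ref{ApproxSumLM}, \ref{EvalFuncLambda}, and Remark \ref{RemarkCentred}) and the same rate computations $k_{n}^{\epsilon_{1}}\Delta_{n}^{1/2}=(k_{n}^{\epsilon_{0}}\Delta_{n})^{1/2}\le (k_{n}\Delta_{n}^{2})^{1/2}\to 0$ and $k_{n}^{\epsilon_{1}-1/2}$ bounded.

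Two points need attention. First, your Step (3) asserts that the centred terms built from $\widehat{A_{j,n}^{\tau}}-\mathbf{E}_{\theta^{\star}}[\widehat{A_{j,n}^{\tau}}\,|\,\mathcal{H}_{j}^{n}]$ are martingale differences with respect to $\left\{\mathcal{H}_{j}^{n}\right\}$. They are not: $\widehat{A_{j,n}^{\tau}}$ involves $\zeta_{j+1,n}$, $\zeta_{j+2,n}'$ and $\lm{\varepsilon}{j+1}$, hence is $\mathcal{H}_{j+2}^{n}$-measurable but not $\mathcal{H}_{j+1}^{n}$-measurable, so consecutive summands overlap and Burkholder's inequality cannot be applied to the full sum as written. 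This is precisely why the paper splits all such sums into residue classes modulo $3$ (the quantities indexed by $3j+i$, $i=0,1,2$); each subsum is a genuine martingale for the coarser filtration, Burkholder applies, and your claimed rate $k_{n}^{-1/2}$ survives. The fix is routine, but as stated the step fails.

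Second, your Step (5) — removing the term $3\Delta_{n}^{(2-\tau)/(\tau-1)}\Lambda_{\star}$, i.e.\ replacing $A_{n}^{\tau}$ by $A^{\tau}$ — identifies a real gap that the paper itself passes over silently (it ends with ``the discussion of Remark \ref{RemarkCentred} leads to the proof'', although the summand there still depends on $n$ through $A_{n}^{\tau}$). You are right that boundedness of $k_{n}^{\epsilon_{1}}\Delta_{n}^{(2-\tau)/(\tau-1)}$ is not implied by [A6] for the given $\epsilon_{0}$: for instance $\tau=1.8$, $h_{n}=n^{-3/4}$, $\epsilon_{0}=2/5$ satisfies [A6], yet $k_{n}^{\epsilon_{1}}\Delta_{n}^{1/4}=n^{1/30}\to\infty$, and since this deterministic bias dominates the stochastic errors when $\tau$ is close to $2$, the displayed bound can genuinely fail for that $\epsilon_{1}$. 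Your remedy of shrinking $\epsilon_{0}$ works — $k_{n}\Delta_{n}^{2}\to0$ gives $\Delta_{n}=o(k_{n}^{-1/2})$, so any $\epsilon_{0}\le(2-\tau)/(\tau-1)$ kills the scaled bias, and no constraint is needed at $\tau=2$ — but note that this proves the lemma only for the smaller $\epsilon_{1}$, i.e.\ a slightly weaker statement than the one displayed. That weakening is harmless for the paper, since Theorem 3 of Yoshida (2011) requires only some positive rate, but it should be stated explicitly rather than hidden in ``taking $\epsilon_{0}$ suitably small''.
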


\begin{proof}With respect to (a), we have
\begin{align*}
	&\sup_{\vartheta\in\Xi}\left|\partial_{\alpha}^{3}\mathbb{H}_{1,n}^{\tau}\left(\alpha;\Lambda\right)\right|\\
	&=\sup_{\vartheta\in\Xi}\left|\frac{1}{2}\partial_{\alpha}^{3}\sum_{j=1}^{k_{n}-2}
	\left(\left(\frac{2}{3}\Delta_{n}A_{n}^{\tau}\left(\lm{Y}{j-1},\alpha,\Lambda\right)\right)^{-1}\left[\left(\lm{Y}{j+1}-\lm{Y}{j}\right)^{\otimes 2}\right]+\log\det A_{n}^{\tau}\left(\lm{Y}{j-1},\alpha,\Lambda\right)\right)\right|\\
	&\le \sup_{\vartheta\in\Xi}
	\left|\sum_{j=1}^{k_{n}-2}\partial_{\alpha}^{3}\left(A_{n}^{\tau}\left(\lm{Y}{j-1},\alpha,\Lambda\right)\right)^{-1}
	\left[\frac{3}{4\Delta_{n}}\left(\lm{Y}{j+1}-\lm{Y}{j}\right)^{\otimes 2}\right]
	\right|\\
	&\qquad+\sup_{\vartheta\in\Xi}\frac{1}{2}\sum_{j=1}^{k_{n}-2}
	\left|\partial_{\alpha}^{3}\log\det A_{n}^{\tau}\left(\lm{Y}{j-1},\alpha,\Lambda\right)\right|\\
	&\le C\sum_{j=1}^{k_{n}-2}\left(1+\left|\lm{Y}{j-1}\right|\right)^{C}
	\Delta_{n}^{-1}\left|\lm{Y}{j+1}-\lm{Y}{j}\right|^{2}
	+C\sum_{j=1}^{k_{n}-2}\left(1+\left|\lm{Y}{j-1}\right|\right)^{C}
\end{align*}
and hence
\begin{align*}
	&\sup_{n\in\mathbf{N}}\mathbf{E}_{\theta^{\star}}\left[\left(k_{n}^{-1}\sup_{\vartheta\in\Xi}
	\left|\partial_{\alpha}^{3}\mathbb{H}_{1,n}^{\tau}\left(\alpha;\Lambda\right)\right|\right)^{M_{3}}\right]\\
	&\le C\sup_{n\in\mathbf{N}}\mathbf{E}_{\theta^{\star}}\left[\left(k_{n}^{-1}\sum_{j=1}^{k_{n}-2}\left(1+\left|\lm{Y}{j-1}\right|\right)^{C}
	\Delta_{n}^{-1}\left|\lm{Y}{j+1}-\lm{Y}{j}\right|^{2}\right)^{M_{3}}\right]\\
	&\qquad+C\sup_{n\in\mathbf{N}}\mathbf{E}_{\theta^{\star}}\left[\left(k_{n}^{-1}\sum_{j=1}^{k_{n}-2}\left(1+\left|\lm{Y}{j-1}\right|\right)^{C}
	\right)^{M_{3}}\right]\\
	&\le C\sup_{n\in\mathbf{N}}k_{n}^{-1}\sum_{j=1}^{k_{n}-2}\mathbf{E}_{\theta^{\star}}\left[1+\left|X_{j\Delta_{n}}\right|^{C}
	\right]\\
	&<\infty.
\end{align*}

For (b), the discussion same as Lemma \ref{alphaScore} leads to the result.
\end{proof}

\begin{proposition}\label{MomentAlpha}
	For any $p>0$, 
	\begin{align*}
	\sup_{n\in\mathbf{N}}\mathbf{E}_{\theta^{\star}}\left[\left|\sqrt{k_{n}}\left(\hat{\alpha}_{n}-\alpha^{\star}\right)\right|^{p}\right]<\infty,\ \sup_{n\in\mathbf{N}}\mathbf{E}_{\theta^{\star}}\left[\left|\sqrt{k_{n}}\left(\tilde{\alpha}_{n}-\alpha^{\star}\right)\right|^{p}\right]<\infty.
	\end{align*}
\end{proposition}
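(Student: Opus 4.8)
The plan is to derive both moment bounds from the polynomial-type large deviation inequality (PLDI) for the diffusion-parameter random field $\mathbb{Z}_{1,n}^{\tau}$. Observe first that $\mathbb{Z}_{1,n}^{\tau}\left(\cdot;\hat{\Lambda}_{n},\alpha^{\star}\right)$ depends only on the noise-variance estimator $\hat{\Lambda}_{n}$ and on no estimator of $\alpha$ or $\beta$, so its PLDI may be established in isolation and no circularity with the main theorem arises. To obtain it I would feed the locally asymptotically quadratic decomposition displayed above into the abstract quasi-likelihood framework of \cite{Yoshida-2011}, verifying its hypotheses through the estimates already at hand: the $L^{p}$-boundedness of the score $\Delta_{1,n}^{\tau}$ (Lemma \ref{alphaScore}(a)), the uniform convergence $k_{n}^{\epsilon_{1}}\sup_{\alpha}\left|\mathbb{Y}_{1,n}^{\tau}-\mathbb{Y}_{1}^{\tau}\right|\to 0$ in $L^{p}$ (Lemma \ref{alphaScore}(b)), the cubic bound on $k_{n}^{-1}\sup_{\vartheta}\left|\partial_{\alpha}^{3}\mathbb{H}_{1,n}^{\tau}\right|$ (Lemma \ref{alphaInfo}(a)), and the rate for $k_{n}^{\epsilon_{1}}\left|\Gamma_{1,n}^{\tau}\left(\alpha^{\star};\vartheta^{\star}\right)-\Gamma_{1}^{\tau}\left(\vartheta^{\star}\right)\right|$ (Lemma \ref{alphaInfo}(b)). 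Together with the identifiability condition [A3], which forces $\mathbb{Y}_{1}^{\tau}\left(\alpha;\theta^{\star}\right)\le -\chi\left(\theta^{\star}\right)\left|\alpha-\alpha^{\star}\right|^{2}$ and the positive-definiteness of $\Gamma_{1}^{\tau}\left(\vartheta^{\star}\right)$, these are exactly the ingredients needed to conclude that for every $L>0$ there is a constant $C\left(L\right)$, independent of $n$, with
\[
P_{\theta^{\star}}\left[\sup_{u_{1}\in V_{1,n}^{\tau}\left(r,\alpha^{\star}\right)}\mathbb{Z}_{1,n}^{\tau}\left(u_{1};\hat{\Lambda}_{n},\alpha^{\star}\right)\ge e^{-r}\right]\le \frac{C\left(L\right)}{r^{L}},\qquad r>0.
\]

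For the maximum-likelihood-type estimator, set $\hat{u}_{1,n}:=\sqrt{k_{n}}\left(\hat{\alpha}_{n}-\alpha^{\star}\right)$. Since $\hat{\alpha}_{n}$ maximises $\mathbb{H}_{1,n}^{\tau}\left(\cdot;\hat{\Lambda}_{n}\right)$, the point $\hat{u}_{1,n}$ maximises $u_{1}\mapsto\mathbb{Z}_{1,n}^{\tau}\left(u_{1};\hat{\Lambda}_{n},\alpha^{\star}\right)$ over $\mathbb{U}_{1,n}^{\tau}\left(\alpha^{\star}\right)$, and $\mathbb{Z}_{1,n}^{\tau}\left(0;\hat{\Lambda}_{n},\alpha^{\star}\right)=1$. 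Hence on $\left\{\left|\hat{u}_{1,n}\right|\ge r\right\}$ the supremum over $V_{1,n}^{\tau}\left(r,\alpha^{\star}\right)$ is at least $1\ge e^{-r}$, so $P_{\theta^{\star}}\left[\left|\hat{u}_{1,n}\right|\ge r\right]\le C\left(L\right)/r^{L}$. Writing the moment as a tail integral and splitting at $r=1$ with $L>p$ gives
\[
\mathbf{E}_{\theta^{\star}}\left[\left|\hat{u}_{1,n}\right|^{p}\right]=\int_{0}^{\infty}pr^{p-1}P_{\theta^{\star}}\left[\left|\hat{u}_{1,n}\right|\ge r\right]\mathrm{d}r\le 1+pC\left(L\right)\int_{1}^{\infty}r^{p-1-L}\mathrm{d}r<\infty,
\]
a bound uniform in $n$ because $C\left(L\right)$ is. For the Bayes-type estimator, after the change of variables $\alpha=\alpha^{\star}+k_{n}^{-1/2}u_{1}$ one represents $\tilde{u}_{1,n}:=\sqrt{k_{n}}\left(\tilde{\alpha}_{n}-\alpha^{\star}\right)$ as the $\mathbb{Z}_{1,n}^{\tau}$-weighted barycentre of $u_{1}$ over $\mathbb{U}_{1,n}^{\tau}\left(\alpha^{\star}\right)$; bounding $\left|\tilde{u}_{1,n}\right|$ by the ratio of $\int\left|u_{1}\right|\mathbb{Z}_{1,n}^{\tau}\pi_{1}\,\mathrm{d}u_{1}$ to a lower-bounded denominator, the positivity and boundedness of $\pi_{1}$ together with the same PLDI control the tail of the numerator, which is precisely the passage from PLDI to convergence of moments for Bayes-type estimators carried out in \cite{Yoshida-2011}. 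This yields $\sup_{n}\mathbf{E}_{\theta^{\star}}\left[\left|\tilde{u}_{1,n}\right|^{p}\right]<\infty$.

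The main obstacle is the PLDI itself, i.e.\ confirming that the four lemmas above and [A3] genuinely match the template of \cite{Yoshida-2011}. The delicate point is the plugged-in estimator $\hat{\Lambda}_{n}$ sitting inside $A_{n}^{\tau}$: one must check that replacing $\Lambda_{\star}$ by $\hat{\Lambda}_{n}$ spoils none of the rates. This is secured by Lemma \ref{EvalFuncLambda} and by the bound $\left\|\hat{\Lambda}_{n}-\Lambda_{\star}\right\|_{p}\le C\left(p\right)\left(h_{n}+n^{-1/2}\right)$ of Lemma \ref{MomentLambda}(b), in combination with the condition $nh_{n}\ge k_{n}^{\epsilon_{0}}$ of [A6], which makes $k_{n}^{\epsilon_{1}}\Delta_{n}^{1/2}\to0$ and thereby keeps the perturbation negligible at the polynomial scale required by the large-deviation estimate. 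Once the PLDI is in hand, the passage to the stated moment bounds is routine along the lines above.
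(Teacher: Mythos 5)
Your derivation of the PLDI and your treatment of the ML-type estimator coincide in substance with the paper's proof: the paper also obtains the inequality from Theorem 3 of \cite{Yoshida-2011} via Lemmas \ref{alphaScore} and \ref{alphaInfo}, and then passes from the tail bound $P_{\theta^{\star}}\left[\left|\sqrt{k_{n}}\left(\hat{\alpha}_{n}-\alpha^{\star}\right)\right|\ge r\right]\le C\left(L\right)r^{-L}$ to the moment bound by exactly the tail-integration argument you write out (the paper compresses this into ``the discussion parallel to \cite{Yoshida-2011}''). That half of your proposal is sound.

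The gap is in the Bayes-type part. You bound $\left|\tilde{u}_{1,n}\right|$ by a barycentre ratio whose denominator $\int\mathbb{Z}_{1,n}^{\tau}\left(u_{1};\hat{\Lambda}_{n},\alpha^{\star}\right)\pi_{1}\left(\alpha^{\star}+k_{n}^{-1/2}u_{1}\right)\mathrm{d}u_{1}$ you simply declare ``lower-bounded'', attributing its control to ``the same PLDI''. That attribution is incorrect: the PLDI is a one-sided \emph{upper} estimate on $\sup_{\left|u_{1}\right|\ge r}\mathbb{Z}_{1,n}^{\tau}$ and says nothing to prevent the random field from being uniformly small near $u_{1}=0$; if the denominator can be tiny with non-negligible probability, no control of the numerator's tail rescues the ratio. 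In Yoshida's framework the Bayes case requires the separate condition
\begin{align*}
\sup_{n\in\mathbf{N}}\mathbf{E}_{\theta^{\star}}\left[\left(\int_{u_{1}:\left|u_{1}\right|\le \delta_{1}}\mathbb{Z}_{1,n}^{\tau}\left(u_{1};\hat{\Lambda}_{n},\alpha^{\star}\right)\mathrm{d}u_{1}\right)^{-1}\right]<\infty,
\end{align*}
and the paper's proof devotes its entire Bayes paragraph to exactly this point: by Lemma 2 of \cite{Yoshida-2011} it suffices to show that for some $p>d$, $\delta>0$ and $C>0$, $\sup_{n\in\mathbf{N}}\mathbf{E}_{\theta^{\star}}\left[\left|\log\mathbb{Z}_{1,n}^{\tau}\left(u_{1};\hat{\Lambda}_{n},\alpha^{\star}\right)\right|^{p}\right]\le C\left|u_{1}\right|^{p}$ for all $\left|u_{1}\right|\le\delta$, which follows from the locally asymptotically quadratic decomposition together with the $L^{p}$-bounds of Lemmas \ref{alphaScore} and \ref{alphaInfo} (score, Hessian-type term and third derivative). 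Your argument needs this verification inserted; without it the stated moment bound for $\tilde{\alpha}_{n}$ does not follow from the PLDI alone.
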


\begin{proof}
Theorem 3 in \cite{Yoshida-2011}, Lemma \ref{alphaScore} and Lemma \ref{alphaInfo} lead to the following polynomial large deviation inequality
\begin{align*}
	P_{\theta^{\star}}\left[\sup_{u_{1}\in V_{1,n}^{\tau}\left(r,\alpha^{\star}\right)}\mathbb{Z}_{1,n}^{\tau}\left(u_{1};\hat{\Lambda}_{n},\alpha^{\star}\right)\ge e^{-r}\right] \le \frac{C\left(L\right)}{r^{L}}
\end{align*}
for all $r>0$ and $n\in\mathbf{N}$. 
The $L^{p}$-boundedness of $\sqrt{k_{n}}\left(\hat{\alpha}_{n}-\alpha^{\star}\right)$ is then obtained with the discussion parallel to \cite{Yoshida-2011}.

With respect to the Bayes-type estimator, we need to verify the next boundedness: there exists $\delta_{1}>0$ and $C>0$ such that
\begin{align*}
\sup_{n\in\mathbf{N}}\mathbf{E}_{\theta^{\star}}\left[\left(\int_{u_{1}:\left|u_{1}\right|\le \delta_{1}}\mathbb{Z}_{1,n}^{\tau}\left(u_{1};\hat{\Lambda}_{n},\alpha^{\star}\right)\mathrm{d}u_{1}\right)^{-1}\right]<\infty.
\end{align*}
Because of the Lemma 2 in \cite{Yoshida-2011}, it is sufficient to show that for some $p>d$, $\delta>0$ and $C>0$,
\begin{align*}
\sup_{n\in\mathbf{N}}\mathbf{E}_{\theta^{\star}}\left[\left|\log\mathbb{Z}_{1,n}^{\tau}\left(u_{1};\hat{\Lambda}_{n},\alpha^{\star}\right)\right|^{p}\right]\le C\left|u_{1}\right|^{p}\quad^\forall u_{1}\text{ s.t. }\left|u_{1}\right|\le \delta
\end{align*}
and actually it is easy to obtain by Lemma \ref{alphaScore} and Lemma \ref{alphaInfo}.
\end{proof}

\begin{lemma}\label{betaScore}
\begin{enumerate}
	\item[{(a)}] For every $p>0$, 
	\begin{align*}
	\sup_{n\in\mathbf{N}}\mathbf{E}_{\theta^{\star}}\left[\left|\Delta_{2,n}^{\mathrm{ML}}\left(\vartheta^{\star}\right)\right|^{p}\right]<\infty,\ \sup_{n\in\mathbf{N}}\mathbf{E}_{\theta^{\star}}\left[\left|\Delta_{2,n}^{\mathrm{Bayes}}\left(\vartheta^{\star}\right)\right|^{p}\right]<\infty.
	\end{align*}
	\item[{(b)}] Let $\epsilon_{1}=\epsilon_{0}/2$. Then for every $p>0$,
	\begin{align*}
	\sup_{n\in\mathbf{N}}\left\|\sup_{\beta\in\Theta_{2}}\left(k_{n}\Delta_{n}\right)^{\epsilon_{1}}\left|\mathbb{Y}_{2,n}^{\mathrm{ML}}\left(\beta;\vartheta^{\star}\right)-\mathbb{Y}_{2}\left(\beta;\vartheta^{\star}\right)\right|\right\|_{p}&<\infty,\\
	\sup_{n\in\mathbf{N}}\left\|\sup_{\beta\in\Theta_{2}}\left(k_{n}\Delta_{n}\right)^{\epsilon_{1}}\left|\mathbb{Y}_{2,n}^{\mathrm{Bayes}}\left(\beta;\vartheta^{\star}\right)-\mathbb{Y}_{2}\left(\beta;\vartheta^{\star}\right)\right|\right\|_{p}&<\infty.
	\end{align*}
\end{enumerate}
\end{lemma}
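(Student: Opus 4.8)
The plan is to treat the ML and Bayes versions simultaneously, since Proposition \ref{MomentAlpha} guarantees that both $\sqrt{k_n}(\hat\alpha_n-\alpha^\star)$ and $\sqrt{k_n}(\tilde\alpha_n-\alpha^\star)$ are $L^p$-bounded; I write the argument for $\hat\alpha_n$ and note it transfers verbatim to $\tilde\alpha_n$, and the whole scheme parallels the proof of Lemma \ref{alphaScore}. First I would perform two reductions. Since $\lm{Y}{j-1}$ is $\mathcal{H}_j^n$-measurable, I would replace the weight $A(\lm{Y}{j-1},\hat\alpha_n)^{-1}\partial_\beta b(\lm{Y}{j-1},\beta^\star)$ by $A(X_{j\Delta_n},\alpha^\star)^{-1}\partial_\beta b(X_{j\Delta_n},\beta^\star)$; the error from replacing $\hat\alpha_n$ by $\alpha^\star$ is controlled by a Taylor expansion in $\alpha$ together with Proposition \ref{MomentAlpha} and H\"older (costing a factor $k_n^{-1/2}$), and the error from replacing $\lm{Y}{j-1}$ by $X_{j\Delta_n}$ by Lemma \ref{ApproxLM}/\ref{ApproxSumLM}. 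The arithmetic fact ensuring these are negligible at the prescribed rate is $\epsilon_1=\epsilon_0/2<1/2$, which holds because $T_n=k_n\Delta_n=o(k_n)$ forces $\epsilon_0<1$ under [A6]. Second, I would insert Lemma \ref{ExpansionLM}(i), so that $\lm{Y}{j+1}-\lm{Y}{j}-\Delta_n b(\lm{Y}{j-1},\beta^\star)$ splits into a Wiener part $a(X_{j\Delta_n})(\zeta_{j+1,n}+\zeta_{j+2,n}')$, a drift-approximation error $\Delta_n(b(X_{j\Delta_n})-b(\lm{Y}{j-1},\beta^\star))$, the term $e_{j,n}$, and the noise increment $(\Lambda_\star)^{1/2}(\lm{\varepsilon}{j+1}-\lm{\varepsilon}{j})$.

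For part (a) the Wiener part is dominant: grouping the summands by $j\bmod 3$ (as in the $R_{i,1,n}^{\tau(3)}$ splitting of Lemma \ref{alphaScore}) turns each residue class into a $\{\mathcal{H}_j^n\}$-martingale, so Burkholder's inequality applies and the predictable quadratic variation is of order $\frac{1}{T_n}\cdot k_n\cdot\Delta_n=1$ by Lemma \ref{EvalZeta}, giving the desired $L^p$-boundedness. The three remaining contributions vanish: the drift-approximation error is $O((k_n\Delta_n^2)^{1/2})\to0$ by [A6]; the $e_{j,n}$ term is handled directly by Lemma \ref{ExpansionLM}(iii), yielding $\frac{1}{T_n^{1/2}}\cdot C k_n\Delta_n^{3/2}=C(k_n\Delta_n^2)^{1/2}\to0$; and the noise increment, again after a $j\bmod 3$ split, is a martingale of quadratic variation of order $\frac{1}{T_n}\cdot k_n\cdot p_n^{-1}=(\Delta_n p_n)^{-1}=h_n^{2/\tau-1}$, which is bounded for every $\tau\in(1,2]$ (equal to $1$ at $\tau=2$, vanishing for $\tau<2$).

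For part (b) I would first record the algebraic identity that, writing $\langle\cdot,\cdot\rangle$ for the $A(x,\alpha^\star)^{-1}$-inner product and combining the cross term with the quadratic correction,
\begin{align*}
&\langle b(x,\beta^\star),b(x,\beta)-b(x,\beta^\star)\rangle-\tfrac12\left(\langle b(x,\beta),b(x,\beta)\rangle-\langle b(x,\beta^\star),b(x,\beta^\star)\rangle\right)\\
&\qquad=-\tfrac12\langle b(x,\beta)-b(x,\beta^\star),b(x,\beta)-b(x,\beta^\star)\rangle,
\end{align*}
so that the drift part of $\lm{Y}{j+1}-\lm{Y}{j}$ produces exactly the integrand of $\mathbb{Y}_2$. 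After the two reductions, the leading term becomes $\frac{1}{k_n}\sum_{j}\left(-\frac12\right)A(X_{j\Delta_n})^{-1}\left[(b(X_{j\Delta_n},\beta)-b(X_{j\Delta_n},\beta^\star))^{\otimes2}\right]$, whose uniform-in-$\beta$ convergence to $\mathbb{Y}_2(\beta;\vartheta^\star)$ at rate $k_n^{\epsilon_1}\ge(k_n\Delta_n)^{\epsilon_1}$ is furnished by Remark \ref{RemarkCentred}. The Wiener, $e_{j,n}$, and noise contributions are $O(T_n^{-1/2})$, $O(\Delta_n^{1/2})$, and $O((n\Delta_n^2)^{-1/2})\le O(T_n^{-1/2})$ respectively, and since $\epsilon_1<1/2$ each is still $o(1)$ after multiplication by $(k_n\Delta_n)^{\epsilon_1}$; uniformity in $\beta$ follows by bounding these quantities together with their $\partial_\beta$-derivatives in $L^p$ and invoking the Sobolev embedding $W^{1,p}(\Theta_2)\hookrightarrow C(\overline\Theta_2)$, exactly as at the end of Lemma \ref{alphaScore}.

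The main obstacle I anticipate is the bookkeeping of the noise increment $(\Lambda_\star)^{1/2}(\lm{\varepsilon}{j+1}-\lm{\varepsilon}{j})$: unlike the diffusion score it does not cancel against any correction, so one must exploit its martingale structure (after the $j\bmod 3$ regrouping, using that the blocks $\{\varepsilon_{\ell h_n}\}$ indexing distinct $\lm{\varepsilon}{j}$ are independent and $\|\lm{\varepsilon}{j}\|_p\le C p_n^{-1/2}$ from Lemma \ref{MomentLambda}(a)) and verify the delicate rate balance $(\Delta_n p_n)^{-1}=h_n^{2/\tau-1}$ for (a) and $(n\Delta_n^2)^{-1}\le T_n^{-1}$ for (b) separately in the regimes $\tau=2$ and $\tau\in(1,2)$. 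The second delicate point is ensuring the plug-in error from replacing $\hat\alpha_n$ (or $\tilde\alpha_n$) by $\alpha^\star$ remains $o(T_n^{-\epsilon_1})$; this is precisely where $\epsilon_1<1/2$ together with the rate $\|\hat\alpha_n-\alpha^\star\|_p=O(k_n^{-1/2})$ of Proposition \ref{MomentAlpha} is essential.
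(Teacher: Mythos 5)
Your proposal is correct and follows essentially the same route as the paper's own proof: the same splitting of $\lm{Y}{j+1}-\lm{Y}{j}$ via Lemma \ref{ExpansionLM} into Wiener, drift-approximation, $e_{j,n}$ and noise-increment parts, Burkholder's inequality for the martingale pieces (with the mod-$3$ regrouping), Lemmas \ref{ApproxLM}/\ref{ApproxSumLM} and Proposition \ref{MomentAlpha} for the local-mean and $\hat{\alpha}_{n}$ (resp.\ $\tilde{\alpha}_{n}$) plug-in replacements, the cancellation identity producing $-\frac{1}{2}A^{-1}\left[\left(b\left(\cdot,\beta\right)-b\left(\cdot,\beta^{\star}\right)\right)^{\otimes 2}\right]$, and Remark \ref{RemarkCentred} plus Sobolev embedding for the uniform-in-$\beta$ ergodic limit in (b). The differences are purely organizational (you replace $\hat{\alpha}_{n}$ by $\alpha^{\star}$ at the outset, whereas the paper carries it along and discharges it at the end, and you make the rate bookkeeping for the noise increment explicit), not substantive.
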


\begin{proof}
We only show the proof for $\Delta_{2,n}^{\mathrm{ML}}$ and $\mathbb{Y}_{2,n}^{\mathrm{ML}}$ since the proof for $\Delta_{2,n}^{\mathrm{Bayes}}$ and $\mathbb{Y}_{2,n}^{\mathrm{Bayes}}$ are quite parallel. For (a), we decompose
\begin{align*}
	\Delta_{2,n}^{\mathrm{ML}}\left(\vartheta^{\star}\right)\left[u_{2}\right]
	&=M_{2,n}^{\mathrm{ML}}+R_{2,n}^{\mathrm{ML}},
\end{align*}
where
\begin{align*}
	M_{2,n}^{\mathrm{ML}}
	&=\frac{1}{\left(k_{n}\Delta_{n}\right)^{1/2}}\sum_{j=1}^{k_{n}-2}A\left(\lm{Y}{j-1},\hat{\alpha}_{n}\right)^{-1}\left[\partial_{\beta}b\left(\lm{Y}{j-1},\beta^{\star}\right)u_{2},a\left(X_{j\Delta_{n}}\right)\left(\zeta_{j+1,n}+\zeta_{j+2,n}'\right)\right]\\
	&\qquad+\frac{1}{\left(k_{n}\Delta_{n}\right)^{1/2}}\sum_{j=1}^{k_{n}-2}A\left(\lm{Y}{j-1},\hat{\alpha}_{n}\right)^{-1}\left[\partial_{\beta}b\left(\lm{Y}{j-1},\beta^{\star}\right)u_{2},\left(\Lambda_{\star}\right)^{1/2}\left(\lm{\varepsilon}{j+1}-\lm{\varepsilon}{j}\right)\right],\\
	R_{2,n}^{\mathrm{ML}}&=\frac{\Delta_{n}}{\left(k_{n}\Delta_{n}\right)^{1/2}}\sum_{j=1}^{k_{n}-2}A\left(\lm{Y}{j-1},\hat{\alpha}_{n}\right)^{-1}\left[\partial_{\beta}b\left(\lm{Y}{j-1},\beta^{\star}\right)u_{2},b\left(X_{j\Delta_{n}}\right)-b\left(\lm{Y}{j-1}\right)\right]\\
	&\qquad+\frac{1}{\left(k_{n}\Delta_{n}\right)^{1/2}}\sum_{j=1}^{k_{n}-2}A\left(\lm{Y}{j-1},\hat{\alpha}_{n}\right)^{-1}\left[\partial_{\beta}b\left(\lm{Y}{j-1},\beta^{\star}\right)u_{2},e_{j,n}\right].
\end{align*}
We can use $L^{p}$-boundedness of $\sqrt{k_{n}}\left(\hat{\alpha}_{n}-\alpha^{\star}\right)$, and Burkholder's inequality; then we obtain have
\begin{align*}
	\sup_{n\in\mathbf{N}}\mathbf{E}_{\theta^{\star}}\left[\left|M_{2,n}^{\mathrm{ML}}\right|^{p}\right]^{1/p}
	&\le C\left(p\right),
\end{align*}
and for the residuals, Lemma \ref{ApproxSumLM} and Lemma \ref{ExpansionLM} lead to
\begin{align*}
	\mathbf{E}_{\theta^{\star}}\left[\left|R_{2,n}^{\mathrm{ML}}\right|^{p}\right]^{1/p}
	&\le C\left(p\right)\sqrt{k_{n}}\Delta_{n}\to0.
\end{align*}
Then we obtain (a).
We prove (b) in the second place. We decompose $\mathbb{Y}_{2,n}^{\mathrm{ML}}\left(\beta;\vartheta^{\star}\right)$ as 
\begin{align*}
	\mathbb{Y}_{2,n}^{\mathrm{ML}}\left(\beta;\vartheta^{\star}\right)
	=M_{2,n}^{\mathrm{ML}\left(\dagger\right)}\left(\hat{\alpha}_{n},\beta\right)
	+R_{2,n}^{\mathrm{ML}\left(\dagger\right)}\left(\hat{\alpha}_{n},\beta\right)
	+\mathbb{Y}_{2,n}^{\mathrm{ML}\left(\dagger\right)}\left(\beta;\vartheta^{\star}\right),
\end{align*}
where
\begin{align*}
	M_{2,n}^{\mathrm{ML}\left(\dagger\right)}\left(\alpha,\beta\right)&=\frac{1}{k_{n}\Delta_{n}}\sum_{j=1}^{k_{n}-2}A\left(\lm{Y}{j-1},\alpha\right)^{-1}\left[b\left(\lm{Y}{j-1},\beta\right),
	a\left(X_{j\Delta_{n}}\right)\left(\zeta_{j+1,n}+\zeta_{j+2,n}\right)\right]\\
	&\qquad-\frac{1}{k_{n}\Delta_{n}}\sum_{j=1}^{k_{n}-2}A\left(\lm{Y}{j-1},\alpha\right)^{-1}\left[b\left(\lm{Y}{j-1},\beta^{\star}\right),
	a\left(X_{j\Delta_{n}}\right)\left(\zeta_{j+1,n}+\zeta_{j+2,n}\right)\right],\\
	&\qquad+\frac{1}{k_{n}\Delta_{n}}\sum_{j=1}^{k_{n}-2}A\left(\lm{Y}{j-1},\alpha\right)^{-1}\left[b\left(\lm{Y}{j-1},\beta\right),
	\left(\Lambda_{\star}\right)^{1/2}\left(\lm{\varepsilon}{j+1}-\lm{\varepsilon}{j}\right)\right]\\
	&\qquad-\frac{1}{k_{n}\Delta_{n}}\sum_{j=1}^{k_{n}-2}A\left(\lm{Y}{j-1},\alpha\right)^{-1}\left[b\left(\lm{Y}{j-1},\beta^{\star}\right),
	\left(\Lambda_{\star}\right)^{1/2}\left(\lm{\varepsilon}{j+1}-\lm{\varepsilon}{j}\right)\right],\\
	R_{2,n}^{\mathrm{ML}\left(\dagger\right)}\left(\alpha,\beta\right)&=\frac{1}{k_{n}\Delta_{n}}\sum_{j=1}^{k_{n}-2}A\left(\lm{Y}{j-1},\alpha\right)^{-1}\left[b\left(\lm{Y}{j-1},\beta\right),e_{j,n}\right]\\
	&\qquad-\frac{1}{k_{n}\Delta_{n}}\sum_{j=1}^{k_{n}-2}A\left(\lm{Y}{j-1},\alpha\right)^{-1}\left[b\left(\lm{Y}{j-1},\beta^{\star}\right),e_{j,n}\right]\\
	&\qquad+\frac{1}{k_{n}}\sum_{j=1}^{k_{n}-2}A\left(\lm{Y}{j-1},\alpha\right)^{-1}\left[b\left(\lm{Y}{j-1},\beta\right),b\left(X_{j\Delta_{n}},\alpha^{\star}\right)-b\left(\lm{Y}{j-1},\alpha^{\star}\right)\right]\\
	&\qquad-\frac{1}{k_{n}}\sum_{j=1}^{k_{n}-2}A\left(\lm{Y}{j-1},\alpha\right)^{-1}\left[b\left(\lm{Y}{j-1},\beta^{\star}\right),b\left(X_{j\Delta_{n}},\alpha^{\star}\right)-b\left(\lm{Y}{j-1},\alpha^{\star}\right)\right],\\
	\mathbb{Y}_{2,n}^{\mathrm{ML}\left(\dagger\right)}\left(\beta;\vartheta^{\star}\right)&=-\frac{1}{2k_{n}}\sum_{j=1}^{k_{n}-2}A\left(\lm{Y}{j-1},\hat{\alpha}_{n}\right)^{-1}
	\left[\left(b\left(\lm{Y}{j-1},\beta\right)-b\left(\lm{Y}{j-1},\beta^{\star}\right)\right)^{\otimes 2}\right].
\end{align*}
It is easy to obtain
\begin{align*}
	\sup_{n\in\mathbf{N}}\mathbf{E}_{\theta^{\star}}\left[\sup_{\theta\in\Theta}\left|M_{2,n}^{\mathrm{ML}\left(\dagger\right)}\right|^{p}\right]&\le C\left(p\right)\left(k_{n}\Delta_{n}\right)^{-p/2}
\end{align*}
using $L^{p}$-boundedness of $\sqrt{k_{n}}\left(\hat{\alpha}_{n}-\alpha^{\star}\right)$, Burkholder's inequality and Sobolev's one, and
\begin{align*}
	\sup_{n\in\mathbf{N}}\mathbf{E}_{\theta^{\star}}\left[\sup_{\theta\in\Theta}\left|R_{2,n}^{\mathrm{ML}\left(\dagger\right)}\right|^{p}\right]&\le C\left(p\right)\Delta_{n}^{p/2}
\end{align*}
because of Lemma \ref{ExpansionLM}. Let us define
\begin{align*}
	\mathbb{Y}_{2,n}^{\mathrm{ML}\left(\ddagger\right)}\left(\beta;\vartheta^{\star}\right)&=-\frac{1}{2k_{n}}\sum_{j=1}^{k_{n}-2}A\left(X_{j\Delta_{n}},\alpha^{\star}\right)^{-1}
	\left[\left(b\left(X_{j\Delta_{n}},\beta\right)-b\left(X_{j\Delta_{n}},\beta^{\star}\right)\right)^{\otimes 2}\right],
\end{align*}
and then because of $L^{p}$-boundedness of $\sqrt{k_{n}}\left(\hat{\alpha}_{n}-\alpha^{\star}\right)$, and Lemma \ref{ApproxSumLM}, we obtain
\begin{align*}
	k_{n}^{\epsilon_{1}}\left\|\sup_{\beta\in\Theta_{2}}\left|\mathbb{Y}_{2,n}^{\mathrm{ML}\left(\dagger\right)}\left(\beta;\vartheta^{\star}\right)-\mathbb{Y}_{2,n}^{\mathrm{ML}\left(\ddagger\right)}\left(\beta;\vartheta^{\star}\right)\right|\right\|_{p}\to0.
\end{align*}
Then $L^{p}$-boundedness of $\sup_{\beta\in\Theta_{2}}\left(k_{n}\Delta_{n}\right)^{\epsilon_{1}}\left|\mathbb{Y}_{2,n}^{\mathrm{ML}\left(\ddagger\right)}\left(\beta;\vartheta^{\star}\right)-\mathbb{Y}_{2}\left(\beta;\vartheta^{\star}\right)\right|$ is obtained by the discussion in Remark \ref{RemarkCentred} and it verifies (b).
\end{proof}

\begin{lemma}\label{betaInfo}
\begin{enumerate}
\item[(a)] For every $M_{3}>0$,
\begin{align*}
\sup_{n\in\mathbf{N}}\mathbf{E}_{\theta^{\star}}\left[\left(\left(k_{n}\Delta_{n}\right)^{-1}\sup_{\beta\in\Theta_{2}}\left|\partial_{\beta}^{3}\mathbb{H}_{2,n}\left(\hat{\alpha}_{n},\beta\right)\right|\right)^{M_{3}}\right]&<\infty,\\
\sup_{n\in\mathbf{N}}\mathbf{E}_{\theta^{\star}}\left[\left(\left(k_{n}\Delta_{n}\right)^{-1}\sup_{\beta\in\Theta_{2}}\left|\partial_{\beta}^{3}\mathbb{H}_{2,n}\left(\tilde{\alpha}_{n},\beta\right)\right|\right)^{M_{3}}\right]&<\infty.
\end{align*}
\item[(b)] Let $\epsilon_{1}=\epsilon_{0}/2$. Then for every $M_{4}>0$,
\begin{align*}
\sup_{n\in\mathbf{N}}\mathbf{E}_{\theta^{\star}}\left[\left(\left(k_{n}\Delta_{n}\right)^{\epsilon_{1}}\left|\Gamma_{2,n}^{\mathrm{ML}}\left(\beta^{\star};\vartheta^{\star}\right)-\Gamma_{2}\left(\vartheta^{\star}\right)\right|\right)^{M_{4}}\right]&<\infty,\\
\sup_{n\in\mathbf{N}}\mathbf{E}_{\theta^{\star}}\left[\left(\left(k_{n}\Delta_{n}\right)^{\epsilon_{1}}\left|\Gamma_{2,n}^{\mathrm{Bayes}}\left(\beta^{\star};\vartheta^{\star}\right)-\Gamma_{2}\left(\vartheta^{\star}\right)\right|\right)^{M_{4}}\right]&<\infty.
\end{align*}
\end{enumerate}
\end{lemma}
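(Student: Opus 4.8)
The plan is to treat the two assertions by the same division of labour as in Lemmas \ref{alphaScore} and \ref{alphaInfo}, replacing the volatility structure by the drift one and the normalisation $k_n$ by $k_n\Delta_n=T_n$, and then to quote Remark \ref{RemarkCentred} for the ergodic averaging.

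For (a), I would first differentiate $\mathbb{H}_{2,n}$ three times in $\beta$. Since $\beta$ enters only through $b(\lm{Y}{j-1},\beta)$, which occurs at most quadratically, $\partial_\beta^3\mathbb{H}_{2,n}(\beta;\alpha)$ is a finite sum of terms of the two shapes $A(\lm{Y}{j-1},\alpha)^{-1}[\partial_\beta^3 b(\lm{Y}{j-1},\beta),\lm{Y}{j+1}-\lm{Y}{j}-\Delta_n b(\lm{Y}{j-1},\beta)]$ and $\Delta_n$ times a product of two $\beta$-derivatives of $b$ paired through $A^{-1}$. By [A1]-(i) the factor $A^{-1}$ is uniformly bounded, and by [A2] all $\beta$-derivatives of $b$ up to order four are of polynomial growth in $x$ uniformly in $\beta$; hence $\sup_\beta|\partial_\beta^3\mathbb{H}_{2,n}|$ is bounded by $C\sum_j(1+|\lm{Y}{j-1}|)^C(|\lm{Y}{j+1}-\lm{Y}{j}|+\Delta_n)$. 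After dividing by $k_n\Delta_n$ and inserting the expansion of Lemma \ref{ExpansionLM}, the genuinely fluctuating part $a(X_{j\Delta_n})(\zeta_{j+1,n}+\zeta_{j+2,n}')+(\Lambda_\star)^{1/2}(\lm{\varepsilon}{j+1}-\lm{\varepsilon}{j})$ is handled by Burkholder's inequality, while the drift part $\Delta_n b$ and the residual $e_{j,n}$ are summed directly to give an $O(1)$ contribution; polynomial moments of $\lm{Y}{j-1}$ are controlled by those of $X_{j\Delta_n}$ through Lemma \ref{ApproxLM} and [A1]-(iii). The supremum over $\beta$ is absorbed by Sobolev's inequality, for which one further $\beta$-derivative is available by [A2]. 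Since both $\hat{\alpha}_n$ and $\tilde{\alpha}_n$ take values in the bounded set $\overline{\Theta}_1$ on which $A^{-1}$ is uniformly controlled, the two displayed bounds follow identically.

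For (b), I would decompose $\Gamma_{2,n}^{\mathrm{ML}}(\beta^\star;\vartheta^\star)$ into its first summand, built from the product $A^{-1}[\partial_\beta b,\partial_\beta b]$, and its second summand, built from $A^{-1}[\partial_\beta^2 b,\lm{Y}{j+1}-\lm{Y}{j}-\Delta_n b(\lm{Y}{j-1},\beta^\star)]$. For the second summand, Lemma \ref{ExpansionLM} shows that the bracketed increment equals, up to the $e_{j,n}$-residual and the drift discrepancy $\Delta_n(b(X_{j\Delta_n})-b(\lm{Y}{j-1},\beta^\star))$ of order $\Delta_n^{3/2}$, a conditionally centred term (this centredness uses Lemma \ref{EvalZeta} for $\zeta_{j+1,n}+\zeta_{j+2,n}'$ and [A5] for the noise); Burkholder's inequality then bounds $(k_n\Delta_n)^{\epsilon_1}$ times this summand by $C(k_n\Delta_n)^{\epsilon_1-1/2}\to0$, since $\epsilon_1<1/2$ by [A6], so it leaves no contribution. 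For the first summand I would replace $\hat{\alpha}_n$ by $\alpha^\star$ by a Taylor expansion in $\alpha$, absorbing the error through the $L^p$-boundedness of $\sqrt{k_n}(\hat{\alpha}_n-\alpha^\star)$ from Proposition \ref{MomentAlpha}, then replace each $\lm{Y}{j-1}$ by $X_{j\Delta_n}$ via Lemma \ref{ApproxSumLM}; the remaining average $\tfrac{1}{k_n}\sum_j A(X_{j\Delta_n},\alpha^\star)^{-1}[\partial_\beta b(X_{j\Delta_n},\beta^\star),\partial_\beta b(X_{j\Delta_n},\beta^\star)]$ converges to $\Gamma_2(\vartheta^\star)$ with the $k_n^{\epsilon_1}$-moment rate of Remark \ref{RemarkCentred}. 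Because $k_n\Delta_n=T_n\le k_n$ gives $(k_n\Delta_n)^{\epsilon_1}\le k_n^{\epsilon_1}$, the required uniform $L^{M_4}$-boundedness follows, and the Bayes statement is obtained verbatim after using the $L^p$-boundedness of $\sqrt{k_n}(\tilde{\alpha}_n-\alpha^\star)$ in place of its ML counterpart.

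The main obstacle I anticipate is the second summand of part (b): one must verify precisely that, after subtracting the drift, the increment is conditionally mean zero with respect to $\{\mathcal{H}_j^n\}$, so that Burkholder's inequality is legitimately applicable and the negative power of $k_n\Delta_n$ is actually gained. A secondary difficulty is propagating the random plug-in $\hat{\alpha}_n$ (and $\tilde{\alpha}_n$) through the nonlinear map $\alpha\mapsto A(\cdot,\alpha)^{-1}$ while keeping all moments uniformly bounded, which is exactly where the polynomial-type control of Proposition \ref{MomentAlpha} is essential.
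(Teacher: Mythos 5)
Your part (b) is, in substance, the paper's own argument: the paper disposes of (b) with a single sentence ("quite analogous to Lemma \ref{betaScore}(b)"), and the decomposition you describe — the quadratic $\partial_\beta b$-part treated by a Taylor expansion in $\alpha$ with Proposition \ref{MomentAlpha}, then Lemma \ref{ApproxSumLM} and Remark \ref{RemarkCentred}, and the increment part killed by conditional centredness plus Burkholder at rate $(k_n\Delta_n)^{-1/2}$ — is exactly what that reference unwinds to. Two repairs are needed before Burkholder is legitimately applicable, and you only half-acknowledge them: the $j$-th summand involves $\zeta_{j+1,n}+\zeta_{j+2,n}'$, which straddles two consecutive blocks, so the summands are not adapted martingale differences and the sum must be split over the residues of $j \bmod 3$ (as the paper does for $R_{i,1,n}^{\tau(3)}$ and $M_{i,1,n}^{\tau}$ in Lemma \ref{alphaScore}); and $\hat{\alpha}_{n}$ is measurable with respect to the whole sample, so it must be replaced by $\alpha^{\star}$ (at cost $O(k_n^{-1/2})$ in $L^p$, by Proposition \ref{MomentAlpha}) \emph{before} the martingale argument, not inside it. Both fixes are standard in this paper, so I regard (b) as essentially correct.

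Part (a), as written, contains a genuine gap in the order of operations. You first bound $\sup_{\beta}|\partial_{\beta}^{3}\mathbb{H}_{2,n}|$ by $C\sum_{j}(1+|\lm{Y}{j-1}|)^{C}(|\lm{Y}{j+1}-\lm{Y}{j}|+\Delta_{n})$ and only afterwards propose to insert Lemma \ref{ExpansionLM} and apply Burkholder to the fluctuating part. Once the absolute value sits inside the sum, all martingale cancellation is gone and Burkholder buys nothing: the summands are nonnegative, $\|\,|\lm{Y}{j+1}-\lm{Y}{j}|\,\|_{p}$ is of exact order $\Delta_{n}^{1/2}$, and therefore your majorant divided by $k_{n}\Delta_{n}$ has $L^{p}$-norm of order $\Delta_{n}^{-1/2}\to\infty$; no subsequent argument can recover the claim from that bound. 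The repair is to reverse the steps: note that every term of $\partial_{\beta}^{3}\mathbb{H}_{2,n}$ in which a derivative has hit $\Delta_{n}b$ carries an explicit factor $\Delta_{n}$ and is harmless under crude bounding, so the only dangerous piece is the signed sum $\frac{1}{k_{n}\Delta_{n}}\sum_{j}A(\lm{Y}{j-1},\alpha)^{-1}[\partial_{\beta}^{3}b(\lm{Y}{j-1},\beta),\lm{Y}{j+1}-\lm{Y}{j}-\Delta_{n}b(\lm{Y}{j-1},\beta)]$; control its supremum in $\beta$ by Sobolev's inequality (the fourth $\beta$-derivative required exists by [A2]), and only then, at fixed $\beta$, insert Lemma \ref{ExpansionLM} and use Burkholder with the mod-$3$ splitting for the $\zeta$- and $\lm{\varepsilon}{j}$-terms, Lemma \ref{ExpansionLM}(iii) for $e_{j,n}$, and crude bounds for the drift. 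Be aware that this corrected route is genuinely different from the paper's own two-line proof of (a), which is purely pathwise and rests on cancelling the $\Delta_{n}$ produced by $\beta$-differentiation against the normalisation; but the first derivative per summand is $A^{-1}[\partial_{\beta}b,\lm{Y}{j+1}-\lm{Y}{j}-\Delta_{n}b]$ with \emph{no} $\Delta_{n}$ left (it already cancelled against $(\Delta_{n}A)^{-1}$), so the paper's display in effect cancels the same $\Delta_{n}$ twice and really bounds $k_{n}^{-1}|\partial_{\beta}^{3}\mathbb{H}_{2,n}|$ rather than $(k_{n}\Delta_{n})^{-1}|\partial_{\beta}^{3}\mathbb{H}_{2,n}|$. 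In other words, the martingale machinery you invoke is what an airtight proof of (a) at the stated normalisation actually requires — you have the right ingredients, but in the order you propose them the key step fails.
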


\begin{proof}
With respect to (a), we have for all $\alpha\in\Theta_{1}$ and $\beta\in\Theta_{2}$,
\begin{align*}
	&\frac{1}{k_{n}\Delta_{n}}\left|\partial_{\beta}^{3}\mathbb{H}_{2,n}\left(\alpha,\beta\right)\right|\\
	&=\frac{1}{k_{n}\Delta_{n}}\sum_{j=1}^{k_{n}-2}\left|\partial_{\beta}^{2}\left(A\left(\lm{Y}{j-1},\alpha\right)\left[\lm{Y}{j+1}-\lm{Y}{j}-\Delta_{n}b\left(\lm{Y}{j-1},\beta\right), 
	\Delta_{n}\partial_{\beta}b\left(\lm{Y}{j-1},\beta\right)^{T}\right]\right)\right|\\
	&=\frac{1}{k_{n}}\sum_{j=1}^{k_{n}-2}\left|\partial_{\beta}^{2}\left(A\left(\lm{Y}{j-1},\alpha\right)\left[\lm{Y}{j+1}-\lm{Y}{j}-\Delta_{n}b\left(\lm{Y}{j-1},\beta\right), 
	\partial_{\beta}b\left(\lm{Y}{j-1},\beta\right)^{T}\right]\right)\right|\\
	&\le \frac{1}{k_{n}}\sum_{j=1}^{k_{n}-2}C\left(1+\left|\lm{Y}{j-1}\right|+\left|\lm{Y}{j}\right|+\left|\lm{Y}{j+1}\right|\right)^{C}.
\end{align*}
Hence the evaluation of (a) can be obtained because of the integrability of $\left\{\lm{Y}{j}\right\}_{j=0,\ldots,k_{n}-1}$.

For (b), it is quite analogous to the (b) in Lemma \ref{betaScore}.
\end{proof}

\begin{proof}[Proof of Theorem \ref{mainthm}]
The first polynomial-type large deviation inequality has already been shown in Proposition \ref{MomentAlpha}, and the second and third ones are also the consequence of Lemma \ref{betaScore}, Lemma \ref{betaInfo} above and Theorem 3 in \cite{Yoshida-2011}. This result, Lemma \ref{MomentLambda} and convergence in distribution shown by \cite{Nakakita-Uchida-2018a} complete the proof for convergence of moments with respect to the adaptive ML-type estimator.

Let us define the following statistical random fields, for all $u_{0}\in\R^{d\left(d+1\right)/2}$ and $n\in\mathbf{N}$ such that $\theta_{\varepsilon}^{\star}+n^{-1/2}u_{0}\in\Theta_{\varepsilon}$,
\begin{align*}
	\mathbb{H}_{0,n}\left(\theta_{\varepsilon}\right)&:=-\frac{1}{2}\sum_{i=1}^{n-1}\left|\frac{1}{2}Z_{i+1}-\theta_{\varepsilon}\right|^{2},\\
	\mathbb{Z}_{0,n}\left(u_{0};\theta_{\varepsilon}^{\star}\right)&:=\exp\left(\mathbb{H}_{0,n}\left(\theta_{\varepsilon}^{\star}+n^{-1/2}u_{0}\right)-\mathbb{H}_{0,n}\left(\theta_{\varepsilon}^{\star}\right)\right),
\end{align*}
where $\theta_{\varepsilon}=\mathrm{vech}\Lambda$ and $Z_{i+1}=\mathrm{vech}\left\{\left(Y_{\left(i+1\right)h_{n}}-Y_{ih_{n}}\right)^{\otimes 2}\right\}$. Note that $\hat{\theta}_{\varepsilon,n}$ maximises $\mathbb{H}_{0,n}$. Now we prove the convergence in distribution such that for all $R>0$,
\begin{align*}
	&\left[\begin{matrix}
	\mathbb{Z}_{0,n}\left(u_{0};\theta_{\varepsilon}^{\star}\right), & \mathbb{Z}_{1,n}^{\tau}\left(u_{1};\hat{\Lambda}_{n},\alpha^{\star}\right), &\mathbb{Z}_{2,n}\left(u_{2};\tilde{\alpha}_{n},\beta^{\star}\right)
	\end{matrix}
	\right]\\
	&\qquad\overset{d}{\to}
	\left[\begin{matrix}
		\mathbb{Z}_{0}\left(u_{0};\theta_{\varepsilon}^{\star}\right),
		& \mathbb{Z}_{1}^{\tau}\left(u_{1};\Lambda_{\star},\alpha^{\star}\right),
		& \mathbb{Z}_{2}\left(u_{2};\alpha^{\star},\beta^{\star}\right)
	\end{matrix}
	\right]\text{ in }\mathcal{C}\left(B\left(R;\R^{d\left(d+1\right)/2+m_{1}+m_{2}}\right)\right),
\end{align*}
where for $\Delta_{0}\sim N_{d\left(d+1\right)/2}\left(\mathbf{0},\mathcal{I}^{\left(1,1\right)}\left(\vartheta^{\star}\right)\right)$, $\Delta_{1}^{\tau}\sim N_{m_{1}}\left(\mathbf{0},\mathcal{I}^{\left(2,2\right),\tau}\left(\vartheta^{\star}\right)\right)$, $\Delta_{2}\sim N_{m_{2}}\left(\mathbf{0},\mathcal{I}^{\left(3,3\right)}\left(\vartheta^{\star}\right)\right)$ such that $\Delta_{0}$, $\Delta_{1}^{\tau}$ and $\Delta_{2}$ are diagonal,
\begin{align*}
	\mathbb{Z}_{0}\left(u_{0};\vartheta^{\star}\right)&:=
	\exp\left(\Delta_{0}\left[u_{0}\right]-\left|u_{0}\right|^{2}\right),\\
	\mathbb{Z}_{1}^{\tau}\left(u_{1};\Lambda_{\star},\alpha^{\star}\right)&:=\exp\left(\Delta_{1}^{\tau}\left[u_{1}\right]-\Gamma_{1}^{\tau}\left(\vartheta^{\star}\right)\left[u_{1}^{\otimes2}\right]\right),\\
	\mathbb{Z}_{2}\left(u_{2};\alpha^{\star},\beta^{\star}\right)&:=\exp\left(\Delta_{2}\left[u_{2}\right]-\Gamma_{2}\left(\vartheta^{\star}\right)\left[u_{2}^{\otimes2}\right]\right),
\end{align*}
and $\mathcal{C}\left(B\left(R;\R^{m}\right)\right)$ is a metric space of continuous functions on the closed ball such that $B\left(R;\R^{m}\right)=\left\{u\in\R^{m};\left|u\right|\le R\right\}$, whose norm is defined as the supreme one. To prove it, it is sufficient to show the finite-dimensional convergence of 
\begin{align*}
	&\left[\begin{matrix}
	\log\mathbb{Z}_{0,n}\left(u_{0};\theta_{\varepsilon}^{\star}\right), & \log\mathbb{Z}_{1,n}^{\tau}\left(u_{1};\hat{\Lambda}_{n},\alpha^{\star}\right), &\log\mathbb{Z}_{2,n}\left(u_{2};\tilde{\alpha}_{n},\beta^{\star}\right)
	\end{matrix}
	\right]\\
	&\qquad\overset{d}{\to}
	\left[\begin{matrix}
	\log\mathbb{Z}_{0}\left(u_{0};\theta_{\varepsilon}^{\star}\right),
	& \log\mathbb{Z}_{1}^{\tau}\left(u_{1};\Lambda_{\star},\alpha^{\star}\right),
	& \log\mathbb{Z}_{2}\left(u_{2};\alpha^{\star},\beta^{\star}\right)
	\end{matrix}
	\right],
\end{align*}
and the tightness of $\left\{\log\mathbb{Z}_{0,n}\left(u_{0}\right)|_{C(B(R))};n\in\mathbf{N}\right\}$, $\left\{\log\mathbb{Z}_{1,n}^{\tau}\left(u_{1}\right)|_{C(B(R))};n\in\mathbf{N}\right\}$, and $\left\{\log\mathbb{Z}_{2,n}\left(u_{3}\right)|_{C(B(R))};n\in\mathbf{N}\right\}$. The finite-dimensional convergence is a simple consequence of \cite{Nakakita-Uchida-2018a}, and the tightness can be obtained if we can show
\begin{align*}
	\sup_{n\in\mathbf{N}}\mathbf{E}_{\theta^{\star}}\left[\sup_{u_{0}\in B\left(R;\R^{d\left(d+1\right)/2}\right)}\left|\partial_{u_{0}}\log\mathbb{Z}_{0,n}\left(u_{0};\theta_{\varepsilon}^{\star}\right)\right|\right]&<\infty,\\
	\sup_{n\in\mathbf{N}}\mathbf{E}_{\theta^{\star}}\left[\sup_{u_{1}\in B\left(R;\R^{m_{1}}\right)}\left|\partial_{u_{1}}\log\mathbb{Z}_{1,n}^{\tau}\left(u_{1};\hat{\Lambda}_{n},\alpha^{\star}\right)\right|\right]&<\infty,\\
	\sup_{n\in\mathbf{N}}\mathbf{E}_{\theta^{\star}}\left[\sup_{u_{2}\in B\left(R;\R^{m_{2}}\right)}\left|\partial_{u_{2}}\log\mathbb{Z}_{2,n}\left(u_{2};\tilde{\alpha}_{n},\beta^{\star}\right)\right|\right]&<\infty,
\end{align*}
as \citet{Ogihara-Yoshida-2011} or \citet{Yoshida-2011}. We have the first evaluation for the simple computation, and the rest ones by Lemma \ref{alphaScore}, Lemma \ref{alphaInfo}, Lemma \ref{betaScore} and Lemma \ref{betaInfo}. Hence we obtain the convergences in distribution in $\mathcal{C}\left(B\left(R;\R^{d\left(d+1\right)/2+m_{1}+m_{2}}\right)\right)$.

Finally it is necessary to show the following evaluations for the proof utilising Theorem 10 in \cite{Yoshida-2011}: there exists $\delta_{1}>0$ and $\delta_{2}>0$ such that
\begin{align*}
	\sup_{n\in\mathbf{N}}\mathbf{E}_{\theta^{\star}}\left[\left(\int_{u_{1}:\left|u_{1}\right|\le \delta_{1}}\mathbb{Z}_{1,n}^{\tau}\left(u_{1};\hat{\Lambda}_{n},\alpha^{\star}\right)\mathrm{d}u_{1}\right)^{-1}\right]<\infty,\\
	\sup_{n\in\mathbf{N}}\mathbf{E}_{\theta^{\star}}\left[\left(\int_{u_{2}:\left|u_{2}\right|\le \delta_{2}}\mathbb{Z}_{2,n}\left(u_{2};\tilde{\alpha}_{n},\beta^{\star}\right)\mathrm{d}u_{2}\right)^{-1}\right]<\infty.
\end{align*}
Because of the Lemma 2 in \cite{Yoshida-2011}, it is sufficient to show that for some $p>d$, $\delta>0$ and $C>0$,
\begin{align*}
	\sup_{n\in\mathbf{N}}\mathbf{E}_{\theta^{\star}}\left[\left|\log\mathbb{Z}_{1,n}^{\tau}\left(u_{1};\hat{\Lambda}_{n},\alpha^{\star}\right)\right|^{p}\right]\le C\left|u_{1}\right|^{p},\quad
	\sup_{n\in\mathbf{N}}\mathbf{E}_{\theta^{\star}}\left[\left|\log\mathbb{Z}_{2,n}\left(u_{2};\tilde{\alpha}_{n},\beta^{\star}\right)\right|^{p}\right]\le C\left|u_{2}\right|^{p},
\end{align*}
for all $u_{1}$, $u_{2}$ satisfying $\left|u_{1}\right|+\left|u_{2}\right|\le \delta$, and actually it is easily obtained by Lemma \ref{alphaScore}, Lemma \ref{alphaInfo}, Lemma \ref{betaScore} and Lemma \ref{betaInfo}.
These results above lead to the following convergences because of Theorem 10 in \cite{Yoshida-2011}:
\begin{align*}
&\left[\begin{matrix}
\mathbb{Z}_{0,n}\left(u_{0};\theta_{\varepsilon}^{\star}\right), & \int f_{1}\left(u_{1}\right)\mathbb{Z}_{1,n}^{\tau}\left(u_{1};\hat{\Lambda}_{n},\alpha^{\star}\right)\mathrm{d}u_{1}, &\int f_{2}\left(u_{2}\right)\mathbb{Z}_{2,n}\left(u_{2};\tilde{\alpha}_{n},\beta^{\star}\right)\mathrm{d}u_{2}
\end{matrix}
\right]\\
&\qquad\overset{d}{\to}
\left[\begin{matrix}
\mathbb{Z}_{0}\left(u_{0};\theta_{\varepsilon}^{\star}\right),
& \int f_{1}\left(u_{1}\right)\mathbb{Z}_{1}^{\tau}\left(u_{1};\Lambda_{\star},\alpha^{\star}\right)\mathrm{d}u_{1},
& \int f_{2}\left(u_{2}\right)\mathbb{Z}_{2}\left(u_{2};\alpha^{\star},\beta^{\star}\right)\mathrm{d}u_{2}
\end{matrix}
\right]\\
&\hspace{2cm}\text{ in }\mathcal{C}\left(B\left(R;\R^{d\left(d+1\right)/2}\right)\right),
\end{align*}
for the functions $f_{1}$ and $f_{2}$ of at most polynomial growth, and the continuous mapping theorem verifies
\begin{align*}
	&\left[\begin{matrix}
	\sqrt{n}\left(\hat{\theta}_{\varepsilon,n}-\theta_{\varepsilon}^{\star}\right),  \sqrt{k_{n}}\left(\tilde{\alpha}_{n}-\alpha^{\star}\right), &\sqrt{T_n}\left(\tilde{\beta}_{n}-\beta^{\star}\right)
	\end{matrix}
	\right]\\
	&\qquad\overset{d}{\to}
	\left[\begin{matrix}
	\zeta_{0},
	& \zeta_{1}^{\tau},
	& \zeta_{2}
	\end{matrix}
	\right].
\end{align*}
Moreover, in a similar way as in the proof of Theorem 8 in \cite{Yoshida-2011}, one has that
for every $p>0$,
\begin{align*}
	\sup_{n\in\mathbf{N}} \mathbf{E}_{\theta^{\star}}\left[\left|\sqrt{T_n}(\tilde{\beta}_{n}-\beta^{\star})\right|^{p}\right]<\infty,
\end{align*}
which completes the proof.
\end{proof}

\section*{Acknowledgement}
This work 
was partially supported by 
JST CREST,
JSPS KAKENHI Grant Number 
JP17H01100 
and Cooperative Research Program
of the Institute of Statistical Mathematics.

\bibliography{bib190401}
\bibliographystyle{apalike}

\end{document}